\newcommand{\BR}{\mathbb{R}}
\newcommand{\BQ}{\mathbb{Q}}
\newcommand{\BZ}{\mathbb{Z}}
\newcommand{\BN}{\mathbb{N}}
\newcommand{\BA}{\mathbb{A}}
\newcommand{\D}{\mathcal{D}}
\newcommand{\Q}{\mathbb{Q}}
\newcommand{\ov}{\overline}
\newcommand{\gl}{\mathfrak{g}}
\newcommand{\s}{\mathfrak{s}}
\newcommand{\Ll}{\mathfrak{l}}
\def\k{{\Bbbk}}
\DeclareMathOperator{\Rad}{Rad}  
\renewcommand{\l}{\mathfrak{l}}
\newcommand{\Set}{\mathcal S}
\newcommand{\C}{\mathbb C}
\newcommand{\F}{\mathbb F}
\newcommand{\Z}{\mathbb Z}
\newcommand{\N}{\mathcal N}
\newcommand{\gP}{\mathfrak{P}}
\newcommand{\btri}{\blacktriangle}
\newcommand{\wtri}{\vartriangle}
\DeclareMathOperator{\uni}{uni}
\DeclareMathOperator{\nil}{nil}
\DeclareMathOperator{\Hom}{Hom}
\DeclareMathOperator{\End}{End}
\DeclareMathOperator{\Aut}{Aut}
\DeclareMathOperator{\Ad}{Ad}
\DeclareMathOperator{\Lie}{Lie}
\DeclareMathOperator{\GL}{GL}
\DeclareMathOperator{\SL}{SL}
\DeclareMathOperator{\Btype}{B}
\DeclareMathOperator{\Ctype}{C}
\DeclareMathOperator{\Gtype}{G}
\DeclareMathOperator{\Etype}{E}
\DeclareMathOperator{\Ftype}{F}
\DeclareMathOperator{\Tang}{T}
\DeclareMathOperator{\chara}{char}
\DeclareMathOperator{\im}{Im}
\DeclareMathOperator{\alg}{-alg}
\DeclareMathOperator{\ALG}{Alg}
\DeclareMathOperator{\SET}{Set}
\DeclareMathOperator{\GROUP}{Grp}
\newtheorem*{theorem}{Theorem}
\newtheorem*{corollary}{Corollary}
\newtheorem*{proposition}{Proposition}
\newtheorem*{lemma}{Lemma}
\theoremstyle{remark}
\newtheorem*{rmk}{Remark}
\newtheorem*{notation}{Notation}
\begin{document}
\title{The Hesselink stratification of nullcones and base change}
\author{MATTHEW C. CLARKE}
\address{Trinity College,
Cambridge, CB\textup{2 1}TQ, UK}
\email{m.clarke@dpmms.cam.ac.uk}

\author{ALEXANDER PREMET}
\address{School of Mathematics,
University of Manchester, Oxford Road, M\textup{13 9}PL, UK}
\email{sasha.premet@manchester.ac.uk}

\begin{abstract}
Let $G$ be a connected reductive algebraic group over an
algebraically closed field of characteristic $p \ge 0$. We give a
case-free proof of Lusztig's conjectures [Unipotent elements in
small characteristic, {\em Transform. Groups} 10 (2005), 449--487]
on so-called unipotent pieces. This presents a uniform picture of
the unipotent elements of $G$ which can be viewed as an extension of
the Dynkin--Kostant theory, but is valid without restriction on $p$.
We also obtain analogous results for the adjoint action of $G$ on
its Lie algebra $\gl$ and the coadjoint action of $G$ on $\gl^*$.

\end{abstract}

\maketitle

\section{Introduction and statement of results}  \label{intro}

\begin{notation}
In what follows $\k$ will denote an algebraically closed
field of arbitrary characteristic $p\ge 0$, unless stated otherwise.
Let $\gl$ denote the Lie algebra of $G$, and let $G_{\uni}$ and
$\gl_{\nil}$ denote the unipotent variety of $G$ and nilpotent
variety of $\gl$ respectively. By an $\s\Ll_2$-triple of $\gl$ we
mean elements $e,f,h \in \gl$ such that $\langle e,f,h \rangle \cong
\s\Ll_2(\k)$. We say that $p$ is {\em good} for $G$ if $p=0$
or $p$ is greater than the coefficient of the highest root in each
component of the root system of $G$, expressed as an integer
combination of simple roots. We denote by $G'$ a connected reductive
group over the complex numbers with the same root datum as $G$, and
$\gl'$ its Lie algebra. We use $\Hom(A,B)$ to denote the set of
algebraic group homomorphisms between algebraic groups $A$ and $B$,
and set $X(G)= \Hom(G,\k)$, and $Y(G)= \Hom(\k,G)$.
We use $\langle\ ,\ \rangle$ to denote the natural pairing $X(G)
\times Y(G) \rightarrow \BZ$. We let $G$ (resp. $\BZ$) act on $Y(G)$
by $g\cdot \lambda : \xi \mapsto g \lambda(\xi) g^{-1}$ (resp.
$n\lambda : \xi \mapsto  \lambda(\xi)^n $) for all $\xi \in
\k$. The identity element of $G$ will be denoted by $1_G$.
When $G$ acts on a set $X$, we let $X/G$ denote the set of
$G$-orbits in $X$. We use the convention that $\BN = \BZ_{\ge 0}$. If
$f : \k^{\times} \rightarrow V$ is a morphism of varieties and $v \in V$,
then we use the notation $\lim_{\xi \rightarrow 0}f(\xi) = v$ to mean that
$f$ may be extended to a morphism $\tilde{f} : \k \rightarrow V$ such that
$\tilde{f}(0) = v$.

\end{notation}


\subsection{} \label{Cparam} We begin by briefly reviewing some
classical results about unipotent elements of $G'$. First we assume
that $G'$ is a simple algebraic group of adjoint type over $\C$.
Springer has shown that there exists a $G'$-equivariant bijective
morphism $\sigma: G'_{\uni} \rightarrow \gl'_{\nil}$, a {\em
Springer morphism}. (Cf. \cite[Theorem III.3.12]{SpSt}. Usually the
group is required to be simply connected but in characteristic zero
the unipotent and nilpotent varieties of isogenous groups are
naturally isomorphic so we may drop that requirement in this case.)
Hence, the study of unipotent classes is equivalent to the study of
nilpotent orbits. Let $e \in \gl_{\nil}$. Then, by the
Jacobson--Morozov theorem, $e$ lies in an $\s\Ll_2$-triple of
$\gl'$. Kostant \cite{Kos} has shown that this induces a bijection
between $G'$-orbits of nilpotent elements and $G'$-orbits of
subalgebras of $\gl'$ isomorphic to $\s\Ll_2(\C)$. In \cite{Dyn}
Dynkin determined the latter in terms of characteristic diagrams
(now called weighted Dynkin diagrams), and showed that by
considering $\gl'$ as an $\s\Ll_2(\C)$-module, one can naturally
define an action of $\SL_2(\C)$ on $\gl'$. Thus, one obtains a
homomorphism of algebraic groups $\SL_2(\C) \rightarrow (\Aut
\gl')^{\circ} = G'$. Let
\begin{equation} \label{SL2factor} \tilde{D}_{G'} = \left\{ \omega \in Y(G') \ \left|   \begin{array}{c}
  \exists \ \tilde{\omega} \in \Hom(\SL_2(\C), G') \\
\mbox{ with } \omega(\xi) =  \tilde{\omega}\left[ \begin{smallmatrix}
\xi &  \\
 &  \xi^{-1} \end{smallmatrix} \right]
\end{array} \right.  \right\}.  \end{equation}
Then we have the following bijection of finite sets:
\begin{equation}  \label{Dynkinparam} \left\{ \mbox{unipotent classes of } G' \right\}
\stackrel{1-1}{\longleftrightarrow} \tilde{D}_{G'} \mbox{\bf \large /{\large $G'$}}.  \end{equation}

\noindent In fact (\ref{Dynkinparam}) holds even when we relax the assumption that
$G'$ is simple and adjoint, by well-known reduction arguments; see, e.g., \cite[Chapter 5]{Car2}.

\subsection{} Now assume that $p \ge 0$. It has been shown by
Springer and Steinberg in \cite{SpSt} that if $p > 3(h-1)$, where
$h$ is the Coxeter number of $G$, then everything described in the
previous subsection remains true, by essentially the same proofs.
Importantly, the analogue of $\tilde{D}_{G'}/G'$ for $p > 3(h-1)$ is
naturally in bijection with $\tilde{D}_{G'}/G'$, which can be seen
by identifying both with certain subsets of Weyl group orbits on one
parameter subgroups of a maximal torus. (We will consider a more
precise correspondence of one parameter subgroups attached to fixed
tori in Section \ref{scheme} by taking a scheme-theoretic approach.)
When $p \le 3(h-1)$ the $\s\Ll_2$-theory may no longer be available and
so an entirely different approach is necessary. However,
Pommerening's theorem (which extends the Bala--Carter theorem)
implies that, in fact, this parametrisation of unipotent classes
extends to any good $p$. This means that one may take
$\tilde{D}_{G'}/G'$ to be a parameter set for the unipotent classes
of any connected reductive group with the same Dynkin diagram as
$G'$, independent of good characteristic. More recently, a case-free
proof of Pommerening's theorem was found in \cite{Pre} and
simplified further in \cite{Tsu}. A Springer morphism also exists in
good characteristic and so $\tilde{D}_{G'}/G'$ also parametrises the
nilpotent orbits. Spaltenstein has shown further that this
parametrisation preserves the poset structure and dimensions of
classes, as well as certain compatibility relations between
parabolic subgroups, across different ground fields of good
characteristic (\cite[Th\'{e}or\`{e}me III.5.2]{Spa}).

When $p$ is a bad prime for $G$, the number of unipotent classes is
often greater than $|\tilde{D}_{G'}/G'|$, and, since Springer
morphisms do not exist when $p$ is bad, they need not be in
bijection with the nilpotent orbits. Both have been determined in
all cases, however. (See \cite[pp.~180--183]{Car2} for a
bibliographic account.) By a classical result of Lusztig
\cite{Lus0}, based on the theory of complex representations of
finite Chevalley groups, the orbit set $G_{\uni}/G$ is finite in all
characteristics. The orbit set $\gl_{\nil}/G$ is always finite as
well. Unfortunately, the only available proof of this fact for
groups of types ${\Etype}_7$ and ${\Etype}_8$ relies very heavily on
computer-aided computations; see \cite{HSp}. It turns out that in
all cases the cardinality of the set $G_{\uni}/G$ is less than or
equal to that of $\gl_{\nil}/G$.

\subsection{} \label{Gn}  Following \cite{Lus5} we now define
unipotent pieces. First note that $Y(G)/G$ is naturally isomorphic
to $Y(G')/G'$. (Indeed, in each case we may restrict to one
parameter subgroups of a fixed maximal torus, say $T$ and $T'$, since all maximal tori
are conjugate. Then the orbits are precisely the Weyl group orbits
on the $\Z$-modules $Y(T), Y(T')$, which can be identified
unambiguously.) We let $\tilde{D}_G$ denote the unique $G$-stable subset
of $Y(G)$ whose image in $Y(G')/G'$ corresponds to
$\tilde{D}_{G'}/G'$ under this bijection. Corresponding to
$\tilde{D}_G$ we define $D_G$ to be the set of sequences
\begin{equation*} \wtri = \left(G_0^{\wtri} \supset G_1^{\wtri} \supset G_2^{\wtri}
\supset \cdots \right) \end{equation*}

\noindent of closed connected subgroups of $G$ such that for some
$\omega \in \tilde{D}_G$ we have \begin{equation*} \Lie G_i^{\wtri}
= \,\left\{ x \in \gl \ \left|  \  \lim_{\xi \rightarrow 0} \xi^{1-i}(\Ad
\omega (\xi))x = 0 \right. \right\}. \end{equation*}

The obvious map $\tilde{D}_G \rightarrow {D}_G$ induces a bijection
$\tilde{D}_G/G \stackrel{\sim}{\rightarrow} {D}_G/G$ on the set of $G$-orbits.
Assume that $\omega \in \tilde{D}_G$ corresponds to some $G_0^{\wtri}$, and $T$
is a maximal torus of $G_0^{\wtri}$ containing $\im \omega$, and let $\Sigma$ denote
the root system of $G$ relative to $T$. Then one can show that \begin{equation*}   G_0^{\wtri} =
\left\langle T, U_{\alpha} \ |\ \alpha \in \Sigma,\  \langle \alpha, \omega \rangle \ge 0
\right\rangle, \mbox{ and }\end{equation*} \begin{equation*}   G_i^{\wtri} = \left\langle
U_{\alpha} \ |\ \alpha \in \Sigma,\  \langle \alpha, \omega \rangle \ge i \right\rangle
\mbox{ for $i \ge 1$ }, \end{equation*}

\noindent where the $U_{\alpha}$ are the root subgroups of $G$
relative to $T$. From this characterisation we see that
$G_0^{\wtri}$ is a parabolic subgroup of $G$, with unipotent radical
$G_1^{\wtri}$, and that $G_i^{\wtri}$ is normalised by $G_0^{\wtri}$
for any $i\ge 0$.

For any $G$-orbit $\btri \in D_G/G$, let $\tilde{H}^{\btri} =
\,\bigcup_{\wtri \in \btri} G_2^{\wtri}$. It is straightforward to
see that each set $\tilde{H}^{\btri}$ is a closed irreducible
variety stable under the conjugation action of $G$; see
Lemma~\ref{locally}. We now define
\begin{equation*}  {H}^{\btri}  :=  \,\tilde{H}^{\btri} \setminus
\textstyle{\bigcup}_{\btri'}\, \tilde{H}^{\btri'},
\end{equation*}
where the union is taken over all $\btri' \in D_G/G$ such that
$\tilde{H}^{\btri'} \subsetneqq \tilde{H}^{\btri}$. The subsets
${H}^{\btri}$ are called the {\em unipotent pieces} of $G$. We also
define \begin{equation*} {X}^{\wtri}  := \, G_2^{\wtri}\,
\textstyle{\bigcap}\, H^{\btri},      \end{equation*} for each
$\wtri \in D_G$, where $\btri$ is the $G$-orbit of $\wtri$. Since
${H}^{\btri}$ is the complement of finitely many non-trivial closed
subvarieties of $\tilde{H}^{\btri}$, it is open and dense in
$\tilde{H}^{\btri}$, hence it is locally closed in $G_{\uni}$. The
subset ${H}^{\btri}$ is $G$-stable since its complement in
$\tilde{H}^{\btri}$ is. Consequently, ${X}^{\wtri}$ is open and
dense in $G_2^{\wtri}$, and stable under conjugation by
$G_0^{\wtri}$. It is worth mentioning that $\btri\,\cong\,
G/G_0^\wtri$ as $G$-varieties.

\subsection{}\label{results}
In \cite{Lus5}, Lusztig has stated the following five properties and
conjectured that they should hold for all connected reductive groups
$G$ over algebraically closed fields.

\begin{enumerate}[$\gP_1.$]
    \item The sets $X^{\wtri}$ ($\wtri \in D_G$) form a partition of
    $G_{\uni}$, i.e. $G_{\uni}=\,\bigsqcup_{\wtri \in D_G}\,X^{\wtri}$.

    \smallskip

    \item For every $\btri \in D_G/G$ the sets $X^{\wtri}$ ($\wtri \in \btri$)
    form a partition of $H^{\btri}$.

    \smallskip

    \item The locally closed subsets $H^{\btri}$ ($\btri \in D_G/G$) form a (finite) partition of $G_{\uni}$,
    i.e.
    $G_{\uni}=\,\bigsqcup_{\btri \in D_G/G}\,H^{\btri}$.

    \smallskip

    \item For any $\wtri \in D_G$ we have that $G_3^{\wtri}X^{\wtri} = X^{\wtri}G_3^{\wtri} =
    X^{\wtri}$.

    \smallskip

    \item Suppose $\k$ is an algebraic closure of $\F_p$ and let
    $F\,\colon\,\, G \rightarrow G$ be the Frobenius endomorphism
    corresponding to a split $\F_q$-rational structure with $q-1$
    sufficiently divisible. Let $\wtri\in D_G$ be
    such that $F(G_i^{\wtri}) = G_i^{\wtri}$ for all $i \ge 0$ and let
    $\btri$ be the $G$-orbit of $\wtri\in D_G$. Then there exist polynomials
    $\varphi^{\btri}(t)$ and  $\psi^{\wtri}(t)$
    in $\BZ[t]$ with coefficients independent of $p$ such that $\varphi^{\btri}(q)=|H^{\btri}(\F_q)|$ and
    $\psi^{\wtri}(q)=|X^{\wtri}(\F_q)|$.
\end{enumerate}

\smallskip

When $p$ is good, properties $\gP_1$--$\,\,\gP_4$ follow from
Pommerening's classification; see \cite{Jan2}, \cite{Pom},
\cite{Pom2}. Lusztig has proved in \cite{Lus5}, \cite{Lus6} and
\cite{Lus7} that $\gP_1$--\,\,$\gP_5$ hold for classical groups (any
$p$) by a case-by-case analysis. For groups of type $\Etype$ (any
$p$) properties $\gP_1$--\,\,$\gP_5$ can be deduced from \cite{Miz},
although this is unsatisfactory since the extensive computations
which the results of that paper are based on are largely omitted,
and these results are known to contain many misprints. As mentioned
in \cite[p.~451]{Lus5} it is desirable to have an independent
verification of properties $\gP_1$--\,\,$\gP_5$ for groups of type
$\Etype$.

More recently Lusztig has introduced natural analogues of the
unipotent pieces $X^{\wtri}$ ($\wtri \in D_G$) and $H^{\btri}$
($\btri \in D_G/G$) for the adjoint $G$-module $\gl$ and its dual
$\gl^*$ and called them {\it nilpotent pieces} of $\gl$ and $\gl^*$.
Replacing $G_{\rm uni}$ by the nilpotent varieties $\N_\gl$ and
$\N_{\gl^*}$ (see Subsection \ref{HMum}) he conjectured that properties $\gP_1$--\,\,$\gP_5$
should hold for them as well. We stress that the $G$-modules $\gl$
and $\gl^*$  are {\it very} different when $p=2$ and $G$ is of type
$\Btype$, $\Ctype$ or $\Ftype_4$ and when $p=3$ and $G$ is of type
$\Gtype_2$. In all other cases there is a $G$-equivariant bijection
$\N_\gl\,\stackrel{\sim}{\rightarrow}\,\N_{\gl^*}$ which restricts
to a bijection between the corresponding nilpotent pieces and
induces a $1\,$--$\,1$ correspondence between the orbit sets
$\N_\gl/G$ and $\N_{\gl^*}/G$; see \cite[\S 5.6]{PS} for more details.
It is worth mentioning that the coadjoint action of $G$ on $\gl^*$
plays a very important role in studying irreducible
representations of the Lie algebra $\gl$.

In \cite{Lus6}, \cite{Lus7} and \cite{Lus1}, Lusztig proved that
$\gP_1$--\,\,$\gP_5$ hold for $\N_\gl$ in the case where $G$ is a
classical group and for $\N_{\gl^*}$ in the case where $G$ is a
group of type $\Ctype$. Very recently the coadjoint case for groups
of type $\Btype$ was settled by Ting Xue, a former PhD student of
Lusztig; see \cite{Xu}. In proving $\gP_1$--\,\,$\gP_5$ for
classical groups Lusztig and Xue relied on intricate counting
arguments involving linear algebra in characteristic $2$ and
combinatorics.

The main goal of this paper is to give a uniform proof of the
following using Hesselink's theory of the stratification of
nullcones.
\begin{theorem}\label{propP}
Let $G$ be a reductive group over an algebraically closed field of
characteristic $p \ge 0$ and $\gl=\Lie G$. Let $\mathcal G$ be one
of $G$, $\gl$ or $\gl^*$ and write $X^\wtri(\mathcal{G})$ for the
piece $X^\wtri$ of $\mathcal G$ labelled by $\wtri \in D_G$. Then
$\gP_1$--\,\,$\gP_5$ hold for $\mathcal G$ and the centraliser in
$G$ of any element in $X^\wtri(\mathcal{G})$ is contained in
$G_0^\wtri$.
\end{theorem}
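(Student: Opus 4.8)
The plan is to reduce everything to Hesselink's theory of optimal one-parameter subgroups for the nullcone, which provides a $G$-stable stratification of $\mathcal{G}_{\nil}$ (or $G_{\uni}$) that is canonically defined by an instability-measuring norm. First I would recall that for a $G$-module $V$ with a $G$-invariant norm on $Y(G)$, each nonzero vector $v$ in the nullcone determines, by the Kempf--Rousseau theory, a canonical (up to a parabolic) optimal destabilising cocharacter $\omega_v$, and that the stratum of $v$ is indexed by the conjugacy class of the associated ``weighted'' cocharacter. The key first step is therefore to set up a precise dictionary between the Hesselink strata and the pieces $X^\wtri$, $H^\btri$: I would show that the labels of Hesselink strata that actually meet the nullcone correspond exactly to the elements of $\tilde D_G/G$ (this is where the input from Subsection~\ref{results} and the characterisation of $\tilde D_G$ via $\SL_2$-triples in characteristic zero, transported by base change as described in Subsection \ref{Gn}, does the work). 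Once the index sets match, the Hesselink stratum attached to $\omega$ should be identified with $X^\wtri$ where $\wtri$ is built from $\omega$ as in Subsection~\ref{Gn}, and the closure of a single stratum, unioned over the orbit, with $\tilde H^\btri$.

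Granting that dictionary, properties $\gP_1$ and $\gP_3$ are immediate from the fact that Hesselink strata partition the nullcone; $\gP_2$ follows from the refinement structure (strata inside a fixed $\tilde H^\btri$); $\gP_4$ should come from the standard fact that the optimal cocharacter $\omega_v$ centralises nothing beyond what $G_0^\wtri$ allows, so translating $v$ by $G_3^\wtri$ (which acts trivially on the relevant graded piece to the needed order) does not change the optimal class — equivalently, $G_3^\wtri$ stabilises $X^\wtri$ because it lies deep enough in the filtration $(G_i^\wtri)$ that it fixes the ``leading term'' used to define optimality. The centraliser assertion, $C_G(v)\subseteq G_0^\wtri$ for $v\in X^\wtri(\mathcal G)$, is exactly the statement that the optimal parabolic $P(\omega_v)=G_0^\wtri$ contains the centraliser of $v$ — this is one of the foundational properties of the Kempf--Rousseau optimal cocharacter (the ``uniqueness up to $P(\omega_v)$'' forces $C_G(v)$ to normalise, hence lie in, $P(\omega_v)$), valid in all characteristics. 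For $\gP_5$ I would invoke the fact that the optimal destabilising data is defined over the prime field once the $\F_q$-structure is split and $q-1$ is sufficiently divisible, so each stratum is defined over $\F_q$ by polynomial equations with $\BZ$-coefficients independent of $p$; counting $\F_q$-points of a locally closed subvariety cut out by such equations, together with Lang's theorem to control the $G(\F_q)$-orbit structure, yields the desired polynomials $\varphi^\btri(t)$ and $\psi^\wtri(t)$.

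I would organise the argument so that the $\mathcal G = G$, $\mathcal G = \gl$ and $\mathcal G = \gl^*$ cases are handled uniformly: the only case-sensitive input is the choice of module ($\mathrm{Ad}$, its dual, or the conjugation action via a Springer-type map when it exists, and directly via Hesselink for the bad-prime cases), and the norm, which in each case can be taken $G$-invariant and $W$-invariant on $Y(T)$. The essential point is that nothing in Hesselink's construction needs a Springer morphism, an $\s\Ll_2$-triple, or good characteristic — it only needs a linear action on a vector space (for $\gl, \gl^*$) or an embedding of $G_{\uni}$ into a linear $G$-variety, which is available since $G$ is linear algebraic.

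The hard part will be the dictionary step, specifically proving that the Hesselink stratum attached to an optimal cocharacter $\omega$ coincides with $X^\wtri$ and not merely with something Zariski-close to it. This requires matching two a priori different recipes: Lusztig's combinatorial recipe defining $X^\wtri$ as $G_2^\wtri \cap H^\btri$ with $H^\btri$ obtained by removing smaller closures, versus Hesselink's recipe defining the stratum as the set of $v$ whose optimal class equals that of $\omega$. One must show, in particular, that $v \in G_2^\wtri$ with optimal cocharacter in the class of $\omega$ is equivalent to $v$ lying in the $\omega$-stratum, which amounts to checking (i) that membership of the optimal stratum forces $v \in G_2^\wtri$ (the cocharacter moves $v$ to $0$ with weight $\ge 2$), and (ii) a matching-of-closures statement to guarantee the $H^\btri$ side agrees — this is where the scheme-theoretic comparison of one-parameter subgroups promised in Section~\ref{scheme}, and Spaltenstein-type compatibility across base change, will be needed to pin down the index set and closure order independently of $p$. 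I also anticipate some care is required in characteristic $2$ and $3$ for the exceptional small-type cases where $\gl$ and $\gl^*$ genuinely differ, since there the norm and the relevant instability function must be chosen on each side separately, but the formal structure of the proof is unchanged.
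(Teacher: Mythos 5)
Your overall strategy agrees with the paper's: identify Lusztig's pieces $X^\wtri(\mathcal G)$ and $H^\btri(\mathcal G)$ with Hesselink blades and strata, derive $\gP_1$--$\gP_4$ and the centraliser containment from the formal properties of the stratification and the Kempf--Rousseau theorem, and establish $\gP_5$ by transporting optimality data across characteristics via a $\BZ$-scheme structure. The dictionary step, the use of Seshadri's semistability result, the role of the $\SL_2$-calibrated cocharacters from characteristic zero, and the Kempf--Rousseau optimal-parabolic argument for $C_G(v)\subseteq G_0^\wtri$ are all identified correctly and in the right order. However, there are two places where the proposal underestimates what is actually needed.

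First, the unipotent case. You write that the conjugation action on $G_{\uni}$ is available ``since $G$ is linear algebraic'' and can be treated ``directly via Hesselink,'' but the tool that the rest of the argument hinges on is a Kirwan--Ness-type criterion: $\lambda$ is optimal for $v$ if and only if the leading graded component of $v$ is $L^\perp(\lambda)$-semistable. That criterion (Theorem~\ref{Tsujii}) is only stated for a linear action on a $G$-module. For the conjugation action of $G$ on itself one has to prove a new version in which the graded component lives in $U_k(\lambda)/U_{k+1}(\lambda)$; this is Theorem~\ref{KNuni} in the paper, proved using Chevalley commutator relations and a careful comparison $m'(u,\lambda)=m(u,\lambda)$. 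Hesselink's linearisation (Proposition~\ref{linear}) shows saturated sets transfer to the tangent space, but it does not by itself supply the semistability criterion you need to invoke Seshadri's theorem; without Theorem~\ref{KNuni} the base-change step for $G_{\uni}$ does not close up. (Also note: the whole point of the exercise is to avoid a Springer-type map, so you cannot lean on one even ``when it exists'' in the main line of argument.)

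Second, $\gP_5$. The claim that each stratum is ``cut out by $\BZ$-coefficient polynomial equations'' plus Lang's theorem would at best show that the strata are defined over $\F_q$; it does not directly yield that $|\N_V(\F_q)|$ is a polynomial in $q$ with coefficients independent of $p$. In the paper (Theorem~\ref{NFq}) the polynomiality is proved by induction on $\mathrm{rk}\,G$: the stratum $\mathcal H(\lambda,k)$ contributes $|G^F/P(\lambda)^F|\cdot q^{N(\lambda,k)}\bigl(q^{n(\lambda,k)}-|\N_{V(\lambda,k)}^F|\bigr)$, where the flag variety count $|G^F/P(\lambda)^F|$ is polynomial with integer coefficients by a separate argument, and the inner nullcone $\N_{V(\lambda,k)}$ of the $L^\perp(\lambda)$-module $V(\lambda,k)$ is handled by the induction hypothesis because $\mathrm{rk}\,L^\perp(\lambda)<\mathrm{rk}\,G$. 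Your sketch does not contain this induction or any substitute for it, and so the polynomiality (and its independence of $p$ and of the admissible lattice) is not actually established. You could instead try to prove directly that $\N(V'_\BZ)$ is polynomial-count and invoke Katz's theorem, as the paper mentions in a remark, but that is a different and deeper route than what you describe, and the paper treats it as a \emph{consequence} of Theorem~\ref{NFq} rather than an alternative proof of it.

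A smaller point: the proposal's handling of $\gl^*$ in special characteristic would also need the explicit construction of a second admissible lattice $\gl''_\BZ$ dual to the Chevalley lattice, and the reduction from general reductive $G$ to the simply connected semisimple case (via the isogeny and the comparison of $\N_{\gl^*}$ with $\N_{\overline{\gl}^*}$). You flag the issue but do not supply the mechanism; without it the statement that $\gl^*$ is an admissible $G$-module, and hence subject to Theorem~\ref{NFq}, is unjustified.
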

We mention for completeness that the definition of nilpotent pieces
used by Lusztig and Xue for $G$ classical differs formally from
Lusztig's original definition in \cite{Lus5} which we follow.
However, Theorem~\ref{propP} implies that both definitions give rise
to the same partitions of $\N_\gl$ and $\N_{\gl^*}$; see
Remark~\ref{L-X} for more details. It is far from clear whether the
definition of Lusztig and Xue can be used for exceptional groups in
arbitrary characteristic.
\begin{rmk}
Regarding $\gP_2$, Lusztig has also conjectured that each piece
$H^{\btri}(\mathcal G)$ is a smooth variety and there exists a
$G$-equivariant fibration $f\colon H^{\btri}(\mathcal G)
\twoheadrightarrow \btri\cong G/G_0^\wtri$ such that
$f^{-1}(\wtri)\cong X^{\wtri}$ for all $\wtri \in\btri$. As far as
we know, the smoothness of $H^{\btri}(\mathcal G)$ is still an open
problem in bad characteristic. Using the techniques of
\cite[\S4]{bogomol} one can show that there always exists a
projective homogeneous $G$-variety $Y\cong G/P$, where $P$ is a
parabolic group scheme with $P_{\rm red}= G_0^\wtri$, and a
$G$-equivariant fibration $\varphi\colon H^{\btri}(\mathcal G)
\twoheadrightarrow Y$ whose fibres are isomorphic to $X^\wtri$ where
$\wtri\in \btri$. However, we do not know whether $\varphi$ can be
chosen to be separable, hence the smoothness of $H^{\btri}(\mathcal
G)$ is not guaranteed. On the other hand, in the Lie algebra case
there exist nilpotent pieces $H^\btri=H^{\btri}(\gl)$ which are not
$G$-equivariantly isomorphic to the geometric quotients
$G\times^{G^\wtri_0} X^\wtri$ with $\wtri\in\btri$. The simplest
example occurs when $\chara \k=2$, $G={\rm PSL}_2(\k)$ and
$X^\wtri=\,\k^\times e$ where $e$ is a nonzero nilpotent element of
$\gl=\,\mathfrak{pgl}_2(\k)$. To see this it suffices observe that
$\tilde{H}^\btri=\,\N_\gl=[\gl,\gl]$ is an abelian ideal of $\gl$ and hence
the derived action of $[\gl,\gl]\subset \Lie(G)$ on the the function algebra
$\k[H^\btri]\subset \k(\N_\gl)$ is trivial, whereas the action of
$[\gl,\gl]$ on $\k[G\times^{G^\wtri_0}
X^\wtri]=\,H^0(G/G_0^\wtri,\k[X^\wtri])$ is not trivial.
\end{rmk}
It is well-known that the sets $G_{\rm uni}$, $\N_\gl$ and
$\N_{\gl^*}$ coincide with the subvarieties of $G$-unstable elements
of the $G$-varieties $G$, $\gl$ and $\gl^*$, respectively (we assume
that $G$ acts on itself by conjugation). Therefore each set admits a
natural stratification coming from the Kempf--Rousseau theory, which
we review in Section~\ref{KRo}. In fact, such a stratification was
defined by Hesselink \cite{Hess2} for any affine $G$-variety $V$
with a distinguished point $*$ fixed by the action of $G$. It is
often referred to as the {\it Hesselink stratification} of the
variety of Hilbert nullforms of $V$. In Section~\ref{unip} we show
that every piece $H^{\btri}(\mathcal{G})$ coincides with a Hesselink
stratum of $\mathcal G$ and conversely every Hesselink stratum of
$\mathcal G$ has the form $H^{\btri}(\mathcal{G})$ for a unique
$\btri \in D_G/G$. We also identify the subsets
$X^{\wtri}(\mathcal{G})$ $(\wtri\in D_G)$ with the {\it blades} of
the variety of nullforms of $\mathcal G$. (As in the theorem we
assume here that $\mathcal G$ is one of $G$, $\gl$ or $\gl^*$.)

In order to relate the pieces $H^{\btri}(\mathcal{G})$ $(\btri \in
D_G/G)$ with Hesselink strata we first upgrade certain reductive
subgroups of $G$ involved in the Kempf--Ness criterion for
optimality of one parameter subgroups to reductive $\BZ$-group
schemes split over $\BZ$, and then make use of a well-known result of
Seshadri \cite{Sesh} on invariants of reductive group schemes. This is done in
Section~\ref{scheme}. After relating unipotent and
nilpotent pieces with Hesselink strata we deduce rather quickly that
$\gP_1$--\,\,$\gP_4$ hold for  $G$, $\gl$ and $\gl^*$.

\subsection{}\label{poly} Proving that $\gP_5$ holds for $G$, $\gl$ and $\gl^*$
requires more effort. Since our arguments involve induction on the
rank of the group we have to look at a much larger class of finite
dimensional rational $G$-modules.

Let $\mathfrak G$ be a reductive $\BZ$-group scheme split over $\BZ$
and suppose that $\k$ contains an algebraic closure of
$\F_p$. Set $G':={\mathfrak G}(\C)$ and $G:={\mathfrak
G}(\k)$. We say that a $G$-module $V$ is {\it admissible} if
there is a finite-dimensional $G'$-module $V'$ and an admissible
lattice $V'_\BZ$ in $V'$ such that $V=V_\BZ'\otimes_\BZ\k$. Recall
that a $\Z$-lattice in $V'$ is called {\it admissible} if it is
stable under the action of the distribution algebra ${\rm
Dist}_\BZ(\mathfrak G)$; see \cite{Jan} for more details. For any
$p^{th}$ power $q$ we may regard the finite vector space
$V(\F_q):=V'_\BZ\otimes_\BZ\F_q$ as an $\F_q$-form of the
$\k$-vector space $V$.

Since $G$ is a reductive group, the invariant algebra
$\k[V]^G$ is generated by finitely many homogeneous
polynomial functions $f_1,\ldots,f_m$ on $V$. The $G$-{\it nullcone}
of $V$, denoted $\N_{G,V}$ or simply  $\N_V$, is defined as the zero
locus of $f_1,\ldots,f_m$ in $V$. We set $\N_V(\F_q):=\N_V\cap
V(\F_q)$.
\begin{theorem}\label{null}
For every admissible $G$-module $V$ there exists a polynomial
$n_V(t)\in\BZ[t]$ such that $|\N_{V}(\F_q)|=n_V(q)$ for all $q=p^l$.
The polynomial $n_V(t)$ depends only on the $G'$-module $V'$, but
not on the choice of an admissible lattice $V'_\BZ$, and is the same
for all primes $p\in\mathbb{N}$.
\end{theorem}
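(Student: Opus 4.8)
The plan is to count $|\N_V(\F_q)|$ by stratifying the nullcone using the Kempf--Rousseau/Hesselink theory, which is the natural tool since $\N_V$ is precisely the set of $G$-unstable points of $V$. Recall that every nonzero unstable point $v$ has an optimal (Kempf) one parameter subgroup $\lambda$, canonical up to conjugacy by the destabilizing parabolic $P(\lambda)$; this partitions $\N_V\setminus\{0\}$ into finitely many Hesselink strata $S_{[\lambda]}$ indexed by $G$-orbits of suitably normalized optimal cocharacters. Concretely, fixing a split maximal torus $T$ over $\BZ$, each stratum is the orbit $G\cdot S_\lambda^0$ where $S_\lambda^0\subseteq V_{>d}(\lambda):=\bigoplus_{i>d}V(\lambda)_i$ is the Kempf ``blade'' of points whose optimal cocharacter is exactly $\lambda$ (here $d$ is the relevant numerical value). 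One has a surjective $G$-equivariant morphism $G\times^{P(\lambda)} S_\lambda^0\to S_{[\lambda]}$, and in fact it is bijective, so $|S_{[\lambda]}(\F_q)|=|G(\F_q)/P(\lambda)(\F_q)|\cdot|S_\lambda^0(\F_q)|=\frac{|G(\F_q)|}{|P(\lambda)(\F_q)|}\,|S_\lambda^0(\F_q)|$ whenever $\lambda$, $P(\lambda)$, and the stratum are all $F$-stable, which holds for the split Frobenius once $q-1$ is divisible enough. Since $|G(\F_q)|$ and $|P(\lambda)(\F_q)|$ are given by polynomials in $q$ with $\BZ$-coefficients independent of $p$ (because $\mathfrak G$ and the $P(\lambda)$ arise as split reductive/parabolic $\BZ$-group schemes — this is where the scheme-theoretic setup of Section~\ref{scheme} is used), it suffices to produce a polynomial count for each blade $S_\lambda^0$.

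To count a blade, I would proceed by induction on the rank (or dimension) of $G$, exactly as flagged in Subsection~\ref{poly}. The point is that $S_\lambda^0$ is an open subset of the affine space $V_{>d}(\lambda)$, cut out as the complement of the larger-numerical-value strata: $S_\lambda^0=\{v\in V_{>d}(\lambda): v\text{ is }L(\lambda)\text{-unstable in the appropriate sense with optimal value exactly }d\}$. By the Kempf--Ness characterization, a point $v\in V_{>d}(\lambda)$ lies in $S_\lambda^0$ iff its image $\bar v$ in $V(\lambda)_{=d}:=V_{>d}/V_{>d+1}$ (the ``leading term'') is a nonzero element lying in a certain open cone, and more precisely the condition on $v$ factors through $L(\lambda)$-instability of $\bar v$ in $V(\lambda)_{=d}$. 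Thus $|S_\lambda^0(\F_q)| = q^{\dim V_{>d+1}(\lambda)}\cdot|(\text{suitable }L(\lambda)\text{-stable locus in }V(\lambda)_{=d})(\F_q)|$, and $L(\lambda)$ is a proper Levi (the cocharacter $\lambda$ is noncentral for a nontrivial stratum), so by induction the relevant $L(\lambda)$-equivariant count on the admissible $L(\lambda)$-module $V(\lambda)_{=d}$ is polynomial in $q$ with $p$-independent $\BZ$-coefficients. One must check that $V(\lambda)_{=d}$ is indeed an admissible $L(\lambda)$-module — it is a graded piece of the admissible lattice, hence stable under $\mathrm{Dist}_\BZ$ of the Levi — and that the relevant ``stable/semistable locus'' count inside it also falls under the inductive hypothesis; this may require broadening the inductive statement slightly to count, for each admissible module, not just the nullcone but the complement of the nullcone (the semistable locus) as well, or equivalently to count $|V(\F_q)| - |\N_V(\F_q)| = q^{\dim V} - n_V(q)$, which is automatic once $n_V$ is polynomial — so in fact a single statement about $\N_V$ suffices, with the blade recursion staying within the same class of admissible modules over Levis.

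The base of the induction is when $G$ is a torus, where $\N_V$ is a union of coordinate subspaces indexed by the faces of the weight polytope not containing $0$, giving an obvious $p$-independent polynomial count; and the empty-nullcone case ($\N_V=\{0\}$) is trivial. The main obstacle I anticipate is the bookkeeping in the induction step: making the reduction ``blade of $V$ over $G$ = affine bundle over a locus defined by $L(\lambda)$-(in)stability in the leading graded piece'' fully precise over $\BZ$, so that the $\F_q$-points match up on the nose and no hidden $p$-dependence creeps in. This requires (i) that the optimal-cocharacter attached to an $\F_q$-rational unstable point is itself $\F_q$-rational and, after conjugating, one of the finitely many fixed representatives — which follows from uniqueness of the Kempf cocharacter together with Galois-equivariance, provided $q-1$ is sufficiently divisible so that the Frobenius acts trivially on $Y(T)$; (ii) that the destabilizing parabolic $P(\lambda)$ and Levi $L(\lambda)$ are defined over $\BZ$ as split group schemes so their $\F_q$-point counts are visibly $p$-independent polynomials (Section~\ref{scheme}, via Seshadri's theorem on invariants of reductive $\BZ$-group schemes, guarantees the $\BZ$-structure is compatible with taking invariants, so the Hesselink stratification itself is ``defined over $\BZ$''); and (iii) that the stratification is genuinely finite with combinatorially controlled index set, independent of $p$. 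Granting the scheme-theoretic input from Section~\ref{scheme}, items (i)--(iii) are essentially formal, and the polynomiality and $p$-independence of $n_V(t)$ then follow by assembling the finitely many polynomial contributions $\frac{|G(\F_q)|}{|P(\lambda)(\F_q)|}\cdot q^{\dim V_{>d+1}(\lambda)}\cdot(\text{inductive count})$.
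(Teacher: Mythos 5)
Your plan coincides with the paper's proof of Theorem~\ref{null} (via its more general Theorem~\ref{NFq}): stratify $\N_V$ by Hesselink strata, count each stratum via the fibration $G\times^{P(\lambda)}V(\lambda,\ge k)_{ss}\xrightarrow{\sim}\mathcal H(\lambda,k)$, reduce to counting a semistable locus in the leading graded piece $V(\lambda,k)$, and induct on rank; the $p$-independence and the $\BZ$-coefficients come from the $\BZ$-group-scheme setup of Section~\ref{scheme} together with Seshadri's result and Theorem~\ref{hesselinkstrata}. However, there is one load-bearing error in your reduction.

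You write that the blade condition factors through ``$L(\lambda)$-instability of $\bar v$'' and then invoke the inductive hypothesis for the ``admissible $L(\lambda)$-module $V(\lambda)_{=d}$.'' Both halves of this are wrong as stated, and the error is not merely cosmetic. The Kirwan--Ness criterion (Theorem in Subsection~\ref{Tsujii}, and its group analogue Theorem~\ref{KNuni}) says that $\lambda$ is optimal for $v$ iff the leading term $\bar v\in V(\lambda,k)$ is semistable for the subgroup $L^{\perp}(\lambda)=T^{\lambda}\cdot\D L(\lambda)$, \emph{not} for the full Levi $L(\lambda)$. The distinction is essential: $\lambda$ itself lies in the centre of $L(\lambda)$ and destabilises every nonzero $\bar v\in V(\lambda,k)$ with $k\ge 1$, so the $L(\lambda)$-nullcone of $V(\lambda,k)$ is \emph{all} of $V(\lambda,k)$, and your semistable count would come out identically zero. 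The correct reduction uses $L^{\perp}(\lambda)$, which has rank ${\rm rk}\,G-1$ (since $T^{\lambda}$ has codimension one in $T$) --- this is precisely what makes the rank induction terminate --- whereas $L(\lambda)$ has the \emph{same} rank as $G$, so your ``induction on the rank'' would also stall. The paper's Proposition~\ref{Zscheme} is exactly the point where $L^{\perp}(\lambda)$ is promoted to a split reductive $\BZ$-group scheme so that $V(\lambda,k)$ becomes an admissible $L^{\perp}(\lambda)$-module and the inductive hypothesis applies with $p$-independent output. Once you replace $L(\lambda)$ by $L^{\perp}(\lambda)$ throughout (in the semistability condition, in the admissibility statement, and in the induction), your argument becomes the paper's.
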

In fact, a more general version of Theorem~\ref{null} is established
in Subsection~\ref{finitefield} which takes care of non-split
Frobenius actions on $G$. Property $\gP_5$ for $\N_\gl$ and
$N_{\gl^*}$ now follows almost at once since both $\gl$ and $\gl^*$
are admissible $G$-modules; see Section~\ref{dual}. Proving $\gP_5$
for $G_{\rm uni}$ requires some extra work; see
Corollary~\ref{last}. Theorem~\ref{null} enables us to show that the
classical results of Steinberg and Springer on the cardinality of
$G_{\rm uni}(\F_q)$ and $\N_\gl(\F_q)$, respectively, are
equivalent. It also enables us to compute the cardinality of
$\N_{\gl^*}(\F_q)$ thereby generalising a recent result of Lusztig
proved for $G$ classical; see \cite{Lus1} and \cite{Xu}.
\begin{corollary}\label{finite} Let $N=\dim G-{\rm rk}\,G$. Then
$|\N_\gl(\F_q)|=\,|\N_{\gl^*}(\F_q)|=q^N$ for any $p^{th}$ power $q$
and any prime $p\in\mathbb{N}$.
\end{corollary}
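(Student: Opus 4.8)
The plan is to derive both equalities from Theorem~\ref{null} together with classical counting facts. First I would record that $\gl$ and $\gl^*$ are admissible $G$-modules: $\gl$ carries the adjoint action of the split $\BZ$-group scheme $\mathfrak{G}$ (its Chevalley $\BZ$-form $\gl_\BZ$ is an admissible lattice, being $\mathrm{Dist}_\BZ(\mathfrak G)$-stable), and $\gl^*$ is the contragredient, with dual lattice $\gl_\BZ^*$ again admissible. Hence Theorem~\ref{null} yields polynomials $n_\gl(t),\,n_{\gl^*}(t)\in\BZ[t]$, independent of $p$, with $|\N_\gl(\F_q)|=n_\gl(q)$ and $|\N_{\gl^*}(\F_q)|=n_{\gl^*}(q)$ for all prime powers $q=p^l$. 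Since the polynomials do not depend on $p$, it suffices to evaluate them for a single convenient choice of $p$ (e.g.\ $p$ large, in particular good for $G$) and infinitely many $q$.

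For such a large prime, Springer's classical result (see \cite{SpSt}) gives $|\N_\gl(\F_q)| = q^{\dim G - \mathrm{rk}\,G} = q^N$ for all powers $q$ of $p$: the nilpotent variety $\N_\gl$ is irreducible of dimension $N$, and the point count over $\F_q$ is exactly $q^N$ on the nose (this uses that in good characteristic $\N_\gl$ is the image under a Springer-type morphism of $G_{\mathrm{uni}}$, whose $\F_q$-point count is $q^N$ by Steinberg's theorem). Therefore $n_\gl(q) = q^N$ for infinitely many $q$, which forces $n_\gl(t) = t^N$ as polynomials. Since $t^N$ is then the value for \emph{every} prime $p$, we conclude $|\N_\gl(\F_q)| = q^N$ unconditionally.

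It remains to handle $\gl^*$. Here I would again fix a large (good) prime $p$. When $p$ is good and not one of the exceptional small cases listed in Subsection~\ref{results} (types $\Btype,\Ctype,\Ftype_4$ at $p=2$ and $\Gtype_2$ at $p=3$), there is a $G$-equivariant bijection $\N_\gl\xrightarrow{\sim}\N_{\gl^*}$ (cf.\ \cite[\S 5.6]{PS}); in particular for all large $p$ this bijection restricts to a bijection on $\F_q$-points, giving $|\N_{\gl^*}(\F_q)| = |\N_\gl(\F_q)| = q^N$ for infinitely many $q$. Hence $n_{\gl^*}(t) = t^N$ as polynomials, and so $|\N_{\gl^*}(\F_q)| = q^N$ for \emph{all} primes $p$ — including the bad small-characteristic cases, where $\gl$ and $\gl^*$ are genuinely different $G$-modules and no such bijection is available. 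This is the punchline: the polynomiality and $p$-independence in Theorem~\ref{null} transport the easy large-$p$ computation to every characteristic. The main obstacle is purely bookkeeping: verifying carefully that $\gl$ and $\gl^*$ fit the admissibility framework (choice of admissible lattices, compatibility of the $\F_q$-forms with the ones used in Theorem~\ref{null}), and invoking the correct form of Steinberg's/Springer's point-count in good characteristic; the structural argument itself is short.
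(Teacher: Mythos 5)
Your proposal is correct and close in spirit to the paper's argument (given immediately after the corollary in Subsection~1.5), but you diverge at one point in a way worth flagging. The paper observes that $\gl$ and $\gl^*$ arise from \emph{the same} complex $G'$-module $\gl'$: namely $\gl = \gl'_\BZ \otimes_\BZ \k$ for the Chevalley lattice, while $\gl^* \cong \gl''_\BZ \otimes_\BZ \k$, where $\gl''_\BZ \subset \gl'$ is the dual lattice \emph{transported back into $\gl'$} via the invariant $\BZ$-bilinear form (this is the content of Subsection~6.2 and equation~(\ref{antipode})). Since Theorem~\ref{null} asserts that $n_V(t)$ depends only on the $G'$-module $V'$ and not on the choice of admissible lattice, the identity $n_\gl(t) = n_{\gl^*}(t)$ is automatic. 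Then one single large-characteristic evaluation — Steinberg's $|G_{\rm uni}(\F_q)| = q^N$ together with a Springer isomorphism $\N_\gl \cong G_{\rm uni}$ defined over $\F_q$ — pins both polynomials down to $t^N$ simultaneously. You instead regard $\gl^*$ as coming from the $G'$-module $(\gl')^*$ with the dual lattice, so you do not get $n_\gl = n_{\gl^*}$ for free from Theorem~\ref{null}; you compensate by invoking a second isomorphism $\N_\gl \cong \N_{\gl^*}$ for large non-special $p$ (via a non-degenerate invariant form, cf.\ Lemma~\ref{Z-dual} and \cite[\S 5.6]{PS}). This works, but requires the extra verification that this isomorphism is defined over $\F_q$, and implicitly relies on $(\gl')^* \cong \gl'$ anyway — so you would do well to note that isomorphism explicitly, at which point the paper's shortcut becomes available. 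Minor remark: your assertion that the dual lattice is "again admissible" is correct but not obvious; the paper proves it using the antipode of $\mathrm{Dist}_\BZ(\mathfrak G)$, and you should either cite that or say a word about why the contragredient $\mathrm{Dist}_\BZ$-action preserves the dual lattice.
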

Once we observe that both $\gl$ and $\gl^*$ are admissible
$G$-modules coming from the adjoint $G'$-module $\gl'$,
Corollary~\ref{finite} becomes a consequence of Steinberg's formula
$|G_{\rm uni}(\F_q)|=q^N$  and the existence for $p\gg 0$ of a
$G$-equivariant isomorphism between $\N_\gl$ and $G_{\rm uni}$
defined over $\F_q$. Indeed, Theorem~\ref{null} then ensures that
the polynomial $n_\gl(t)=n_{\gl^*}(t)$ has coefficients independent
of $p$.

\smallskip

\noindent{\bf Acknowledgement.} The authors would like to thank
Anthony Henderson and George Lusztig for their comments on an
earlier version of this paper. Lusztig informed us that he has also
found a case-free proof of the formula $|\N_{\gl^*}(\F_q)|=q^N$
(unpublished). His idea was to show that the LHS is equal to the
number of $\F_q$-rational nilpotent elements in the Lie algebra of
the Langlands dual group and then use Springer's formula for that
case.

\section{The Kempf--Rousseau theory}\label{KRo}

Although much of this theory goes back to Mumford \cite{Mum}, Kempf
\cite{Kempf} and Rousseau \cite{Rous}, our set-up here is inspired
by Hesselink \cite{Hess2}, Slodowy \cite{Slod} and Tsujii
\cite{Tsu}.

\subsection{} \label{HMum} Let $V$ be a pointed $G$-variety, i.e. a $G$-variety with a distinguished
point $* \in V$ fixed by the action of $G$.  We will assume further
that $V$ is non-singular at $*$, although many results still hold
even when $*$ is singular. Let $H$ be a closed subgroup of $G$. Then
a point $v \in V$ is called $H$-{\em unstable} if there exists some
$\lambda \in Y(H)$ such that $\lim_{\xi \rightarrow 0}
\lambda(\xi)\cdot v = *$. Otherwise we say
that $v$ is $H$-{\em semistable}.

\begin{theorem} {\em (The Hilbert-Mumford criterion (cf.
\cite{MuFoKi}))} The following are equivalent.
\begin{enumerate}[{\em (i)}]
    \item  $v$ is $H$-unstable.
    \item  $f(v)=0$ for each regular function $f \in \k[V]^H$ which vanishes at $*$.
    \item  $0 \in \ov{H\cdot v}$.
\end{enumerate}  \end{theorem}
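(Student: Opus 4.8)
The plan is to prove the Hilbert--Mumford criterion in the form stated, namely the equivalence of $H$-instability of $v$, vanishing of all $H$-invariant functions at $v$ that vanish at $*$, and $* \in \ov{H\cdot v}$. (Note the statement above is written with $0$ in place of $*$ in (iii); I will treat $*$ as the base point throughout, replacing $0$ by $*$.) The first observation is that the problem reduces to the classical case of a linear action: since $V$ is an affine $G$-variety, we may $G$-equivariantly embed $V$ as a closed subvariety of a finite-dimensional $G$-module $W$ in such a way that $*$ is sent to $0\in W$; this is the standard fact that any affine $G$-variety embeds equivariantly in a rational $G$-module, and one can arrange the distinguished fixed point to land at the origin by translating. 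Restricting everything to $H$, we are reduced to proving the three-way equivalence for the action of $H$ on the closed $H$-stable subvariety $V \subset W$ with $* = 0$.

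First I would prove (i) $\Rightarrow$ (iii): if $\la \in Y(H)$ satisfies $\lim_{\xi\to 0}\la(\xi)\cdot v = *$, then the morphism $\xi \mapsto \la(\xi)\cdot v$ from $\k^\times$ extends to a morphism $\k \to \ov{H\cdot v}$ sending $0$ to $*$, so $* \in \ov{H\cdot v}$. Next, (iii) $\Rightarrow$ (ii) is immediate: any $f \in \k[V]^H$ is constant on $H\cdot v$, hence on $\ov{H\cdot v}$ by continuity, so if $* \in \ov{H\cdot v}$ then $f(v) = f(*)$; if moreover $f$ vanishes at $*$ then $f(v) = 0$. The substantive implication is (ii) $\Rightarrow$ (i), and this is where the real work lies. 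The strategy is: assuming $v$ is not $H$-unstable, i.e. $* \notin \ov{H\cdot v}$ (which we will have shown is equivalent to the negation of (i) via the easy direction, or rather we prove the contrapositive of (ii)$\Rightarrow$(i) using (iii)), produce an $H$-invariant function separating $v$ from $*$. Actually the cleanest route is to prove the contrapositive chain: not (i) $\Rightarrow$ not (iii) $\Rightarrow$ not (ii). For not (i) $\Rightarrow$ not (iii), i.e. $* \notin \ov{H\cdot v}$, one invokes the Kempf--Rousseau/Hilbert--Mumford theory itself (as the excerpt cites \cite{MuFoKi}): if $* \in \ov{H\cdot v}$ then there is a one-parameter subgroup $\la \in Y(H)$ driving $v$ to $*$ — this is precisely the hard Hilbert--Mumford lemma, valid over any algebraically closed field, and is the content of the Kempf--Rousseau theory being set up in this section. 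For not (iii) $\Rightarrow$ not (ii): if $* \notin \ov{H\cdot v}$, then $\ov{H\cdot v}$ and $\{*\}$ are disjoint closed $H$-stable subsets of $W$ (after the embedding), and by the standard separation property of reductive group actions — applicable since $H$ need only be such that the relevant closure-separation holds; in this paper $H$ will be taken reductive, or one uses the affine quotient — there exists $f \in \k[W]^H$ with $f(\ov{H\cdot v}) = 1$ and $f(*) = 0$, which restricts to the desired invariant on $V$.

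The main obstacle is the implication not (iii) $\Rightarrow$ not (i), i.e. the statement that whenever $* \in \ov{H\cdot v}$ one can reach $*$ along a single one-parameter subgroup of $H$. For $\k = \C$ this is Mumford's original argument; in positive characteristic it requires the Kempf--Rousseau theorem, which is precisely the theory this section develops, so in the final writeup this should either be cited directly from \cite{Kempf}, \cite{Rous} (as \cite{MuFoKi} covers the characteristic-zero case and the positive-characteristic extension is due to Kempf and Rousseau) or deferred to the optimality results stated later in Section~\ref{KRo}. A secondary technical point is ensuring the reductivity hypothesis under which the separation lemma (not (iii) $\Rightarrow$ not (ii)) holds: here $H$ is an arbitrary closed subgroup, so strictly one should either restrict to reductive $H$ (which is the only case used in the sequel, since the optimal destabilizing one-parameter subgroups lie in reductive subgroups) or note that the equivalence of (ii) and (iii) uses the categorical quotient by $H$, which separates disjoint closed $H$-invariant sets when $H$ is reductive. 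I would flag that the proof is essentially a citation-and-reduction argument: embed equivariantly in a module, then quote the classical Hilbert--Mumford lemma and the closed-orbit separation property, and so I would keep the writeup short, pointing to \cite{MuFoKi}, \cite{Kempf}, \cite{Rous} for the two nontrivial inputs.
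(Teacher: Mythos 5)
The paper gives no proof of this theorem: it is stated as a classical result with a citation to Mumford--Fogarty--Kirwan, so there is nothing to compare your argument against line by line. What you have written is a reasonable reconstruction of the standard proof, and the decomposition into (i)\,$\Rightarrow$\,(iii)\,$\Rightarrow$\,(ii) plus the contrapositive chain not\,(i)\,$\Rightarrow$\,not\,(iii)\,$\Rightarrow$\,not\,(ii) is the usual one. Two small comments on your writeup.

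First, you have a labeling slip in the last paragraph: you write that ``the main obstacle is the implication not\,(iii)\,$\Rightarrow$\,not\,(i)'' and then describe the content of (iii)\,$\Rightarrow$\,(i) (that $* \in \ov{H\cdot v}$ implies some one-parameter subgroup of $H$ carries $v$ to $*$). The hard input is of course (iii)\,$\Rightarrow$\,(i), i.e.\ not\,(i)\,$\Rightarrow$\,not\,(iii); as stated, ``not\,(iii)\,$\Rightarrow$\,not\,(i)'' is just the contrapositive of the easy limit argument. The mathematics you describe is right; only the label is off.

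Second, your flag about reductivity is exactly on point. The paper phrases the theorem for an arbitrary closed subgroup $H \le G$, but both nontrivial implications genuinely need $H$ reductive: the separation step (disjoint closed $H$-invariant sets are separated by an $H$-invariant regular function) uses geometric reductivity, and the Kempf--Rousseau version of the Hilbert--Mumford lemma in positive characteristic is likewise proved for reductive groups. The paper in fact implicitly assumes this --- immediately after the theorem it invokes finite generation of $\k[V]^H$ and cites Haboush's theorem, which requires $H$ reductive, and every $H$ to which the theorem is subsequently applied ($G$ itself, $T^\lambda$, $L^\perp(\lambda)$) is reductive. If one were to include a proof in the paper, it would be worth stating the reductivity hypothesis explicitly, as you suggest. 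Your reduction to a linear $G$-module sending $*$ to $0$ is also correct and is the standard preliminary step, since $*$ is $G$-fixed and one can translate any equivariant closed embedding so that $*$ lands at the origin.
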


The set of all $G$-unstable elements is called the {\em nullcone},
denoted $\N_V$. It is well-known that $\k[V]^H$ is generated
(as a $\k$-algebra with $1$) by finitely many elements, and
so $\N_V$ is Zariski-closed in $V$. (In positive characteristic this
requires the Mumford conjecture proved by Haboush in \cite{Hab}.) If
we take $V = \gl$, with adjoint $G$-action and $*=0$, then in all
characteristics $\N_\gl = \gl_{\nil}$. Similarly, if $V = G$, with
the conjugation
action and $*=1_G$, then in all characteristics $\N_G = G_{\uni}$. 

\subsection{} Let $\psi : X \rightarrow Y$ be a morphism of affine varieties, and let
$\psi^* : \k[Y] \rightarrow \k[X]$ be its comorphism. Let $y \in Y$
and let $I_y$ be the maximal ideal of $y$ in $\k[Y]$. We define the
{\em coordinate ring} of the schematic fibre $\psi^{-1}(y)$ to be
$\k[X]/\psi^*(I_y)\k[X]$ (cf.  \cite[\S 14.3]{Eis}).  Now let $v \in
V$ and $\lambda \in Y(G)$. If $\lim_{\xi \rightarrow
0}\lambda(\xi)\cdot v = *$ and $v \not= *$, then the fibre of the
extended morphism at $*$ has coordinate ring $\k[T]/(T^m)$ for some
$m$, where $T$ is an indeterminate.

We now define a function which can be used to measure instability.
Given $\lambda \in Y(G)$ we define a function $m(-,
\lambda)\colon\,V\longrightarrow \Z_{\ge 0}\sqcup \{\pm\infty\}$ as
follows:
\begin{equation*} m(v,\lambda) :=\, \left\{ \begin{array}{ll}
         -\infty                &  \mbox{if $\lim_{\xi \rightarrow 0}\lambda(\xi)\cdot v$ does not exist} ;\\
         0                      &  \mbox{if $\lim_{\xi \rightarrow 0}\lambda(\xi)\cdot v = v' \not=*$} ;\\
         m \mbox{ (as above) }  &  \mbox{if $\lim_{\xi \rightarrow 0}\lambda(\xi)\cdot v = *$ \mbox{ ($v \not= *$) }};\\
         +\infty                &  \mbox{if $v = *$} .\end{array} \right.\end{equation*}

\noindent Note that $v \in V$ is $H$-unstable if and only if
$m(v,\lambda) \ge 1$ for some $\lambda \in Y(H)$. For a set $X \subset
V$ we also define $m(X, \lambda) = \inf_{v \in X} m(v,\lambda)$, and
say that $X$ is {\em uniformly unstable} if $m(X,\lambda) \ge 1$ for
some $\lambda \in Y(G)$.

\subsection{} \label{lambdasbgrps} Let $\lambda \in Y(G)$. We define some subgroups of
$G$ associated to $\lambda$ as follows: \begin{eqnarray*} P(\lambda)& := &
\left\{ g \in G \ \left|  \ \lim_{\xi \rightarrow 0} \lambda (\xi) g {\lambda
(\xi)}^{-1} \mbox{ exists } \right. \right\} , \\
L(\lambda) & := & C_G(\im \lambda), \\
U(\lambda) & := & \left\{ g \in G\ \left|  \ \lim_{\xi \rightarrow 0}
\lambda (\xi) g {\lambda (\xi)}^{-1} = 1_G \right. \right\}. \end{eqnarray*}

Let $T$ be a maximal torus of $L(\lambda)$ (and therefore a maximal
torus of $G$). If $\Sigma$ is the root system of $G$ relative to
$T$, then \begin{eqnarray*} P(\lambda) & = & \left\langle T,
U_{\alpha} \ \left| \ \alpha \in \Sigma,\ \langle \alpha,
\lambda \rangle \ge 0 \right. \right\rangle , \\
L(\lambda) & = & \left\langle T, U_{\alpha} \ \left| \ \alpha \in \Sigma,\ \langle \alpha,
\lambda \rangle = 0 \right. \right\rangle , \\
U(\lambda) & = & \left\langle U_{\alpha} \ \left| \ \alpha \in \Sigma,\ \langle \alpha, \lambda
\rangle \ge 1 \right. \right\rangle. \end{eqnarray*}

\noindent Hence $P(\lambda)$ is a parabolic subgroup of $G$ with unipotent radical $U(\lambda)$.
The following is now a straightforward exercise.

\begin{lemma} \label{basiclem} Let $v \in V$ and $\lambda \in Y(G)$. Then
$m(g\cdot v, \lambda) = m( v, g\cdot \lambda) = m( v, \lambda)$ for
all $g \in P(\lambda)$. In particular, for $i \ge 0$, the set of
$v\in V$ such that $m(v, \lambda) \ge i$ is
$P(\lambda)$-invariant.\end{lemma}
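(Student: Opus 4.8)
The plan is to unwind the definition of $m(v,\lambda)$ and reduce everything to elementary properties of limits.

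First I would establish the identity $m(v, g\cdot\lambda) = m(v,\lambda)$ for $g \in P(\lambda)$. The point is that $g\cdot\lambda$ and $\lambda$ differ by conjugation: $(g\cdot\lambda)(\xi) = g\lambda(\xi)g^{-1}$. Write $u(\xi) := \lambda(\xi)^{-1} g^{-1} \lambda(\xi)$; since $g \in P(\lambda)$ we have $g^{-1} \in P(\lambda)$ as well (for $P(\lambda)$ is a group), so $\lim_{\xi\to 0}\lambda(\xi)g^{-1}\lambda(\xi)^{-1}$ exists, and then $u(\xi)^{-1} = \lambda(\xi)^{-1}g\lambda(\xi)$ also has a limit, call it $u_0 \in P(\lambda)$, as $\xi \to 0$. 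Then $(g\cdot\lambda)(\xi)\cdot v = g\lambda(\xi)g^{-1}\cdot v = g\cdot\bigl(\lambda(\xi)\cdot(g^{-1}\cdot v)\bigr)$, but more usefully one computes $\lambda(\xi)\cdot\bigl((g\cdot\lambda)(\xi)\cdot v\bigr)$ — I would instead directly compare the two one-parameter families. The cleanest route: the curve $\xi \mapsto (g\cdot\lambda)(\xi)\cdot v$ equals $\xi \mapsto g\cdot\bigl(\lambda(\xi)\cdot (g^{-1}\cdot v)\bigr)$, which is not quite what we want; the right manipulation is $(g\cdot\lambda)(\xi)\cdot v = g\lambda(\xi)g^{-1}\cdot v$, and writing $g^{-1} = \lambda(\xi)^{-1}\bigl(\lambda(\xi)g^{-1}\lambda(\xi)^{-1}\bigr)\lambda(\xi)$ gives $(g\cdot\lambda)(\xi)\cdot v = g\lambda(\xi)^{-1}\cdot\bigl(h(\xi)\cdot(\lambda(\xi)\cdot v)\bigr)$ where $h(\xi) = \lambda(\xi)g^{-1}\lambda(\xi)^{-1} \to h_0$. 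Hmm — this shows the limits match when $\lim \lambda(\xi)\cdot v = *$, using that $*$ is $G$-fixed and $g, h_0$ act continuously. I expect the bookkeeping of which conjugate lands where to be the fiddly part, but it is purely formal. The invariance of the multiplicity $m$ (the integer case) then follows because the schematic fibre at $*$ is an isomorphism invariant: conjugating the whole picture by $g$ gives an isomorphism of the two extended morphisms $\k \to V$ carrying $*$ to $*$, hence the same local ring $\k[T]/(T^m)$.

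Next I would handle $m(g\cdot v, \lambda) = m(v, \lambda)$ for $g \in P(\lambda)$. Here the key fact is that $\lambda(\xi)g\lambda(\xi)^{-1} =: g(\xi)$ extends to a morphism $\tilde g \colon \k \to G$ with $\tilde g(0) =: g_0 \in G$ (this is exactly the definition of $P(\lambda)$). Then $\lambda(\xi)\cdot(g\cdot v) = \bigl(\lambda(\xi)g\lambda(\xi)^{-1}\bigr)\cdot\bigl(\lambda(\xi)\cdot v\bigr) = g(\xi)\cdot\bigl(\lambda(\xi)\cdot v\bigr)$. If $\lim_{\xi\to 0}\lambda(\xi)\cdot v = v'$ exists, then by continuity of the action map $\k \times V \to V$ applied to $(\tilde g, \widetilde{\lambda(\cdot)\cdot v})$ we get $\lim_{\xi\to 0}\lambda(\xi)\cdot(g\cdot v) = g_0\cdot v'$; in particular it exists, it equals $*$ iff $v' \in g_0^{-1}\cdot\{*\} = \{*\}$ (as $*$ is $G$-fixed), and it is $\neq *$ iff $v' \neq *$. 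This settles the cases $m = -\infty$, $m = 0$, and $m = +\infty$ (the last since $g\cdot v = *$ iff $v = *$). For the remaining integer case I would again invoke that $v \mapsto g\cdot v$ combined with $\lambda$-conjugation yields an isomorphism of the two extended fibre situations over $*$, so the length $m$ of $\k[T]/(T^m)$ is unchanged; alternatively pull back along the automorphism of $\k^\times$-families. The only subtlety is checking that $g_0 \cdot v' = *$ forces $v' = *$, which is immediate because $*$ is a $G$-fixed point.

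Finally, the "in particular" is a one-line consequence: if $m(v,\lambda) \ge i$ and $g \in P(\lambda)$, then $m(g\cdot v, \lambda) = m(v,\lambda) \ge i$, so the set $\{v \in V : m(v,\lambda)\ge i\}$ is stable under $P(\lambda)$. I do not anticipate a genuine obstacle here — this is the "straightforward exercise" the authors advertise — but if anything requires care it is the treatment of the integer-valued case of $m$, i.e. verifying that the schematic multiplicity at $*$ is preserved under the two operations; the cleanest formulation is that in both cases we have a commutative square of morphisms $\k^\times \to V$ related by an automorphism of the source (namely $\xi \mapsto \xi$, after conjugating the target family by a fixed element of $G$), which extends to $\k$ and fixes the special fibre over $*$, hence induces an isomorphism on the coordinate rings $\k[T]/(T^m)$ of the schematic fibres and forces the exponents to agree.
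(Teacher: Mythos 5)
The paper does not record a proof of this lemma --- it is flagged only as ``a straightforward exercise'' --- so I can assess your argument only on its own terms. Your second paragraph, establishing $m(g\cdot v,\lambda)=m(v,\lambda)$ for $g\in P(\lambda)$, is correct and complete: the identity $\lambda(\xi)\cdot(g\cdot v)=\bigl(\lambda(\xi)g\lambda(\xi)^{-1}\bigr)\cdot\bigl(\lambda(\xi)\cdot v\bigr)$, the extendability of $\xi\mapsto\lambda(\xi)g\lambda(\xi)^{-1}$ over $\xi=0$ (which is exactly the definition of $P(\lambda)$), the $G$-invariance of $*$, and the automorphism-of-the-$\k$-family argument for the schematic length handle all four cases of $m$.

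Your first paragraph, however, contains two concrete algebra errors. First, it is \emph{not} true that $u(\xi)^{-1}=\lambda(\xi)^{-1}g\lambda(\xi)$ extends to $\xi=0$ just because $g\in P(\lambda)$: since $\lambda(\xi)^{-1}=\lambda(\xi^{-1})$, that expression equals $\lambda(\xi^{-1})g\lambda(\xi^{-1})^{-1}$, and its limit at $0$ exists precisely when $g$ lies in the \emph{opposite} parabolic $P(-\lambda)$ --- you have conflated conjugation by $\lambda(\xi)$ with conjugation by $\lambda(\xi)^{-1}$. Second, the formula $(g\cdot\lambda)(\xi)\cdot v = g\lambda(\xi)^{-1}\cdot\bigl(h(\xi)\cdot(\lambda(\xi)\cdot v)\bigr)$ carries a spurious $\lambda(\xi)^{-1}$: multiplying out gives $g\lambda(\xi)^{-1}h(\xi)\lambda(\xi) = g\lambda(\xi)^{-1}\lambda(\xi)g^{-1}\lambda(\xi)^{-1}\lambda(\xi)=1_G$, so your right-hand side collapses to $v$ rather than to $(g\cdot\lambda)(\xi)\cdot v$. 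The correct identity, obtained from $g^{-1}=\lambda(\xi)^{-1}h(\xi)\lambda(\xi)$, is $(g\cdot\lambda)(\xi)\cdot v = g\cdot\bigl(h(\xi)\cdot(\lambda(\xi)\cdot v)\bigr)$, after which the isomorphism argument of your second paragraph goes through. Cleaner still: discard the first paragraph and instead observe the elementary $G$-equivariance $m(g\cdot v,\,g\cdot\lambda)=m(v,\lambda)$ for \emph{every} $g\in G$, which follows at once from $(g\cdot\lambda)(\xi)\cdot(g\cdot v)=g\cdot\bigl(\lambda(\xi)\cdot v\bigr)$; combining this with your (correct) second-paragraph identity applied to $g^{-1}\in P(\lambda)$ yields $m(v,g\cdot\lambda)=m(g^{-1}\cdot v,\lambda)=m(v,\lambda)$ immediately.
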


\subsection{} \label{KRthm} We define the set of {\it virtual one parameter subgroups} of $G$ as follows.
Let \begin{equation*}  Y_{\Q}(G)\, =\, (\BN \times Y(G)) / \sim,
\end{equation*}

\noindent where $\sim$ is the equivalence relation on $\BN \times
Y(G)$ such that $(n, \lambda) \sim (m, \mu)$ if and only if $n\mu =
m\lambda$. Note that $Y(G)$ is naturally a subset of $Y_{\Q}(G)$ and
the action of $G$ on $Y(G)$ naturally induces an action on $Y_{\Q}(G)$. If $T$ is a
torus, then $Y(T)$ is a free $\Z$-module, and so $Y_{\Q}(T) \cong \, Y(T)
\otimes_{\Z} \Q$ may be regarded as a $\Q$-vector space. We extend
our measure of instability to $Y_{\Q}(G)$ as follows. For $\lambda
\in Y_{\Q}(G)$, we have that $n \lambda \in Y(G)$ for some $n \in
\BN$ and so we may define $m(v, \lambda) = n^{-1} m(v, n\lambda)$.

A {\it squared norm mapping} on $Y_{\Q}(G)$ is a $G$-invariant
function $q : Y_{\Q}(G) \rightarrow \Q_{\ge 0}$ whose restriction to
$Y_{\Q}(T)$ for any maximal torus $T$ is a positive definite
quadratic form. By an averaging trick (cf. \cite[\S 7.1.7]{Sprin}) one
can always define a $W$-invariant positive definite quadratic form
$q$ on $Y_{\Q}(T)$. For an arbitrary $\lambda \in Y_{\Q}(G)$, let $g
\in G$ be such that $g \cdot \lambda \in Y_{\Q}(T)$. Then define
$q(\lambda)= q(g \cdot \lambda)$. One checks that this defines a
positive definite quadratic form on $Y_{\Q}(G)$ by observing that
the $G$ orbits on $Y_{\Q}(G)$ restrict to the $W$ orbits on
$Y_{\Q}(T)$. We define a map $\| \cdot \|_q : Y_{\Q}(G) \rightarrow
\BR_{\ge 0}$ by $\| \lambda \|_q\, := \sqrt{q(\lambda)}$ for all
$\lambda \in Y_{\Q}(G)$, which we call a {\it norm} on $Y_{\Q}(G)$.
From now on we will fix such a norm, and drop the subscript $q$. Let
$X \subset V$ and $\lambda \in Y(G) \setminus \{ 0 \}$. We say that
$\lambda$ is {\em optimal} for $X$ if \begin{equation*}
\frac{m(X,\lambda )}{\| \lambda \|} \ge \frac{m(X, \mu)}{\| \mu \|}
\ \mbox{ for all } \mu \in Y(G)\setminus \{0\}.  \end{equation*}

If $v \in V$ then, for ease of notation, we will often identify it
with the set $\{ v \}$ and thus talk about one parameter subgroups
which are optimal for $v$.  Usually the notion of optimality depends
on the norm, but in the special case that $V= \gl_{\nil}$ or
$G_{\uni}$, with adjoint or conjugation action respectively, or when
$V$ is a $G$-module, it is
independent of the norm by \cite[Theorem 7.2]{Hess}. Note that if
$\lambda$ is optimal for some set, then so is any non-zero scalar
multiple of $\lambda$. It will be convenient therefore to have a
canonical way of choosing an element in $(\mathbb{Q}^\times\lambda)
\cap Y(G)$ and for this we use the following notion from
\cite{Slod}. We say that $\lambda$ is {\em primitive} if we cannot
write $\lambda = n \mu$ for any integer $n \ge 2$ and $\mu \in
Y(G)$. If $X \subset V $ is uniformly unstable, we let $\Delta_X$
denote the set of all primitive elements in $Y(G)$ which are optimal
for $X$.

\begin{rmk} \label{Hessopt} Hesselink has defined a similar set in \cite{Hess2},
denoted $\Delta(X)$. This corresponds to a canonical choice for
optimal {\it virtual} one parameter subgroups. Let $\lambda \in
\Delta_X$. Then $\Delta(X) = \frac{1}{m(X,\lambda)}\Delta_X$. We
will need to use both sets later. To avoid confusion we will
use $\tilde{\Delta}_X$ to denote $\Delta(X)$, except in Subsection
\ref{modulestrata}, where it would be cumbersome to do so. \end{rmk}

\begin{theorem}  {\em (Kempf \cite{Kempf}, Rousseau \cite{Rous})} Let $X \subset V $ be
uniformly unstable.
\begin{enumerate}[{\em (i)}]
    \item We have $\Delta_X\not= \emptyset$ and there exists a parabolic subgroup $P(X)$ in $G$
    such that $P(X) = P(\lambda)$ for all $\lambda \in \Delta_X$.
    \item \label{vb} We have $\Delta_X = \{ g \cdot \lambda \ | \ g \in P(X) \}$ for any $\lambda \in \Delta_X$.
    \item If $T$ is a maximal torus of $P(X)$, then $Y(T) \cap \Delta_X$ contains exactly one element,
    which we denote by $\lambda_T(X)$.
    \item For any $g \in G$ we have that $\Delta_{g \cdot X} = g \Delta_X g^{-1}$ and
    $P(g \cdot X) = g P(X) g^{-1}$.
    The stabiliser $G_X = \{ g \in G\,|\,\,  g \cdot X = X \}$ is contained in $P(X)$. \end{enumerate}
    \end{theorem}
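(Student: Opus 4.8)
The plan is to follow the classical variational argument of Kempf and Rousseau, adapted to the setting of a uniformly unstable subset $X\subset V$. First I would work inside a fixed maximal torus $T$ of $G$. Since $X$ is uniformly unstable, the function $\mu\mapsto m(X,\mu)/\|\mu\|$ on $Y_\Q(T)\setminus\{0\}$ is bounded above and takes a positive value somewhere; because $m(X,-)$ is linear on each Weyl chamber of $Y_\Q(T)$ (being an infimum of the piecewise-linear functions $m(v,-)$, each of which is the minimum over a finite set of characters) and $\|\cdot\|$ is the norm of a positive-definite quadratic form, the restriction of this ratio to the rational points of the closed unit ball is a "concave-like" function on each chamber whose supremum is attained at a unique point of $Y_\Q(T)$. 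This gives, for each maximal torus $T$, a unique optimal element $\lambda_T(X)\in Y_\Q(T)$, and clearing denominators and dividing by the gcd of the coordinates yields the primitive integral representative; this is assertion (iii). The uniqueness here is the standard Bruhat–Tits/Kempf convexity lemma and is where I expect the main technical work to lie — one must carefully control the behaviour of $m(X,-)$ across chamber walls to see that the maximiser of the ratio cannot lie on more than one chamber's closure in an ambiguous way, and that it is genuinely unique rather than lying on a positive-dimensional face.

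Next I would globalise. Given any two maximal tori, they are conjugate, so by the $G$-equivariance of $m$ and $\|\cdot\|$ (the norm is $G$-invariant by construction) the collection of all optimal primitive one parameter subgroups $\Delta_X$ is nonempty and $\bigcup_{T}\{\lambda_T(X)\}$, with $T$ ranging over maximal tori of $G$. The optimal value $m(X,\lambda)/\|\lambda\|$ is an invariant $d>0$ of $X$ independent of the choice of $\lambda\in\Delta_X$. To prove (i), fix $\lambda\in\Delta_X$ and let $P=P(\lambda)$. I would show $P(\mu)=P$ for every $\mu\in\Delta_X$: given another optimal $\mu$, both $\lambda$ and $\mu$ lie in a common maximal torus $T'$ of $G$; but then $\lambda_{T'}(X)$ is the unique optimal element of $Y_\Q(T')$ up to scaling, forcing $\mu$ to be a positive multiple of $\lambda_{T'}(X)$, and applying the same argument with $T$ (a maximal torus of $P(\lambda)$) one deduces that $\mu$ and $\lambda$ are conjugate by an element whose optimal parabolics agree; the real content is that two optimal $\lambda$'s with a common maximal torus must be proportional, which again reduces to the uniqueness in (iii). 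The equality $P(X):=P(\lambda)$ is then well defined.

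For (ii), I would argue that $\Delta_X$ is a single $P(X)$-orbit. The inclusion $\{g\cdot\lambda\mid g\in P(X)\}\subseteq\Delta_X$ follows from Lemma~\ref{basiclem}, which gives $m(X,g\cdot\lambda)=m(X,\lambda)$ for $g\in P(\lambda)$, together with $G$-invariance of the norm, so conjugates of $\lambda$ by $P(X)$ are again optimal and primitive. Conversely, given $\mu\in\Delta_X$, choose a maximal torus $T''$ of $P(X)$ and a maximal torus $T'''$ containing $\mu$; since all maximal tori of $P(X)$ are $P(X)$-conjugate and $\mu$ is $G$-conjugate into $T''$ (after possibly further conjugating, using that $P(\mu)$ and $P(X)$ share a maximal torus by the argument in (i)), one can move $\mu$ by an element of $P(X)$ into $Y(T'')$, where by (iii) it must equal $\lambda_{T''}(X)$, hence be $P(X)$-conjugate to $\lambda$. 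Finally, for (iv): $G$-equivariance of $m(\cdot,\cdot)$ in the sense $m(g\cdot v,g\cdot\lambda)=m(v,\lambda)$ and of the norm immediately give $\Delta_{g\cdot X}=g\Delta_X g^{-1}$ and hence $P(g\cdot X)=gP(X)g^{-1}$; and if $g\in G_X$ then $\Delta_X=\Delta_{g\cdot X}=g\Delta_X g^{-1}$, so $g$ normalises $\Delta_X$, and since $\Delta_X$ is a single $P(X)$-orbit this forces $g$ to normalise $P(X)=P(\lambda)$, whence $g\in N_G(P(X))=P(X)$ because parabolic subgroups are self-normalising. The crux throughout is the uniqueness statement of (iii); once that convexity fact is in hand, (i), (ii) and (iv) are formal consequences of $G$-equivariance, Lemma~\ref{basiclem}, and the conjugacy of maximal tori.
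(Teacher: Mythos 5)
The paper itself does not prove this theorem (it is cited to Kempf and Rousseau), so the comparison is with the classical argument. Your torus-local step is on the right track: the uniqueness of $\lambda_T(X)$ is indeed the Kempf convexity lemma (the function $m(X,-)=\inf_{v\in X}m(v,-)$ is concave on $Y_\Q(T)$ as an infimum of the concave functions $m(v,-)$, so $\{\lambda: m(X,\lambda)\ge 1\}$ is a closed convex rational subset with a unique point of minimal norm since $\|\cdot\|$ comes from a positive definite form), and parts (ii) and (iv) would follow formally once (i) and (iii) are in place. However, your proof of (i) contains a genuine error: you assert that if $\lambda,\mu\in\Delta_X$ then ``both $\lambda$ and $\mu$ lie in a common maximal torus $T'$ of $G$.'' This is false in general — two one-parameter subgroups lie in a common maximal torus if and only if they commute, and distinct elements of $\Delta_X$ typically do not. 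Already for $G=\SL_2$, $V=\k^2$, $v=e_1$, $\lambda(t)=\mathrm{diag}(t,t^{-1})$, the set $\Delta_v$ is the full $P(\lambda)$-orbit of $\lambda$, and conjugating $\lambda$ by a nontrivial upper-triangular unipotent element yields an optimal cocharacter not commuting with $\lambda$. Relatedly, the claim $\Delta_X=\bigcup_T\{\lambda_T(X)\}$ over \emph{all} maximal tori $T$ is also wrong: the $T$-local optimiser achieves the global supremum, and hence lies in $\Delta_X$, only for those $T$ contained in $P(X)$ — which is exactly the content of (iii), not something you can assume at that point.

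The classical repair is to use a common maximal torus of the \emph{parabolics}, not of the cocharacters. Given $\lambda,\mu\in\Delta_X$, the parabolic subgroups $P(\lambda)$ and $P(\mu)$ contain a common maximal torus $T$. Since $\im\lambda$ lies in some maximal torus of $P(\lambda)$ and all maximal tori of $P(\lambda)$ are $P(\lambda)$-conjugate, there is $g\in P(\lambda)$ with $g\cdot\lambda\in Y(T)$; by Lemma~\ref{basiclem} together with $G$-invariance of $\|\cdot\|$, $g\cdot\lambda$ is still optimal and primitive, and $P(g\cdot\lambda)=gP(\lambda)g^{-1}=P(\lambda)$. Likewise one finds $h\in P(\mu)$ with $h\cdot\mu\in Y(T)\cap\Delta_X$ and $P(h\cdot\mu)=P(\mu)$. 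The uniqueness of the optimal element of $Y_\Q(T)$ now forces $g\cdot\lambda=h\cdot\mu$, hence $P(\lambda)=P(\mu)$, which is (i); the statement of (iii) for $T\subset P(X)$ and part (ii) then follow exactly along the lines you sketched. You should also insert the short argument that $m(X,\lambda)/\|\lambda\|$ is bounded above on $Y_\Q(T)\setminus\{0\}$ (Cauchy--Schwarz against the finitely many $\phi_T(\chi)$ with $\chi\in S_T(X)$), which your plan uses implicitly in asserting that a maximiser exists.
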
  
\subsection{} \label{Slodowy} We now restrict to the special case where
$V$ is a finite-dimensional rational $G$-module with $* = 0$. Let
$T$ be a maximal torus of $G$ with Weyl group $W$. A very useful
set of tools for analysing the $T$-instability and optimality of subsets of $V$
are certain polytopes in $Y_{\Q}(T)$ defined in terms of weights of
the $T$-action on $V$. Let $X_{\Q}(T) = \,X(T) \otimes_{\BZ} \Q$,
and let $(\ \,,\ )$ be a $W$-invariant inner product on $Y_{\Q}(T)$
induced by the norm $\| \cdot \|$. Then there is a $\Q$-linear
isomorphism $\phi_T \colon\, X_{\Q}(T) \rightarrow Y_{\Q}(T)$
defined uniquely by the relation $\langle \chi , \lambda \rangle = (
\phi_T(\chi) , \lambda )$ for all $\chi \in X_{\Q}(T)$ and $\lambda
\in Y_{\Q}(T)$.

Consider the weight space decomposition $V = \bigoplus_{\chi \in
X(T)} V_{\chi}$ of $V$ with respect to $T$, where
\begin{equation*} \label{semiinv} V_{\chi} = \left\{ v \in V  \ | \
t \cdot v = \chi (t) v \mbox{ for all } t \in T \right\}.
\end{equation*}
Then for any $v \in V$ we may uniquely write $v = \sum_{\chi \in
X(T)} v_{\chi}$ with $v_{\chi} \in V_{\chi}$. If $X \subset V$, we
define $S_T(X) :=\, \{ \chi \in X(T)\, |\,\,  v_{\chi} \not= 0
\mbox{ for some } v \in X\}$, and let $K_T(X)$ denote the convex
hull (or Newton polytope) of $\phi_T(S_T(X))$ in $Y_{\Q}(T)$. Then
we have the following.
\begin{lemma}  {\em (Cf. \cite{Slod})}\label{opti} Let $X \subset V$ and $T$ be a maximal torus of $G$.
\begin{enumerate}[{\em (i)}]
    \item If $\lambda \in Y(T)$, then $m(X, \lambda) = \,\min_{\mu \in \phi_T(S_T(X))}\, (\mu, \lambda)
    =\,
    \min_{\mu \in K_T(X)}\, (\mu, \lambda).$
    \item There exists a unique element $\mu_T(X) \in K_T(X)$ of minimal norm.
    \item \label{normsquared} The set $X$ is uniformly $T$-unstable if and only if $\mu_T(X) \not= 0$,
    in which case we have that $\| \mu_T(X) \|^2 = \,m(X, \mu_T(X))$.
    \item \label{uniqueintorus} If $X$ is $T$-unstable and $\lambda_T(X)$ is the unique primitive
    scalar multiple of $\mu_T(X)$, then $\Delta_{X, T} = \{\lambda_T(X)\}$.   \end{enumerate} \end{lemma}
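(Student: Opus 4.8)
The plan is to translate all four assertions into elementary convex geometry in the rational Euclidean space $(Y_{\Q}(T),(\ ,\ ))$, the bridge being an explicit formula for $m$ on one parameter subgroups of $T$. For (i), let $\lambda\in Y(T)$ and $v=\sum_{\chi}v_{\chi}$ with $v_{\chi}\in V_{\chi}$, so that $\lambda(\xi)\cdot v=\sum_{\chi}\xi^{\langle\chi,\lambda\rangle}v_{\chi}$; pulling the coordinate functions on $V$ back along the extended morphism $\xi\mapsto\sum_{\chi}\xi^{\langle\chi,\lambda\rangle}v_{\chi}$ and computing their orders of vanishing at $0$ (i.e. the greatest common divisor of the resulting one-variable polynomials) shows that the schematic fibre at $*=0$ is $\k[T]/(T^{m})$ with $m=\min\{\langle\chi,\lambda\rangle\mid v_{\chi}\neq 0\}$, whence $m(v,\lambda)=\min\{\langle\chi,\lambda\rangle\mid v_{\chi}\neq 0\}$ (with the conventions of Subsection~\ref{HMum} when the right-hand side is $\le 0$). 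Taking the infimum over $v\in X$ and using $S_{T}(X)=\bigcup_{v\in X}\{\chi\mid v_{\chi}\neq 0\}$ gives $m(X,\lambda)=\min_{\chi\in S_{T}(X)}\langle\chi,\lambda\rangle=\min_{\mu\in\phi_{T}(S_{T}(X))}(\mu,\lambda)$ by the defining property of $\phi_{T}$; since $\mu\mapsto(\mu,\lambda)$ is linear, this equals its minimum over the convex hull $K_{T}(X)$, proving (i). Via the scaling rule $m(X,\lambda)=n^{-1}m(X,n\lambda)$ the identity extends to all $\lambda\in Y_{\Q}(T)$.

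For (ii) I would regard $K_{T}(X)$ as a non-empty compact convex subset of the finite-dimensional Euclidean space $Y_{\Q}(T)$ (extending scalars to $\BR$ if convenient). Strict convexity of $\|\cdot\|^{2}$ yields a unique point $\mu_{T}(X)\in K_{T}(X)$ of minimal norm, and $\mu_{T}(X)\in Y_{\Q}(T)$ because it is the orthogonal projection of $0$ onto the affine hull of the unique face of $K_{T}(X)$ in whose relative interior it lies, and that face — a convex hull of some of the rational points $\phi_{T}(S_{T}(X))$ — spans a rational affine subspace. The fact I will actually use downstream is the variational inequality $(\mu,\mu_{T}(X))\ge\|\mu_{T}(X)\|^{2}$ for all $\mu\in K_{T}(X)$, obtained by differentiating $\|(1-t)\mu_{T}(X)+t\mu\|^{2}\ge\|\mu_{T}(X)\|^{2}$ at $t=0^{+}$.

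Parts (iii) and (iv) then follow quickly. Combining (i) with the variational inequality gives $m(X,\mu_{T}(X))=\min_{\mu\in K_{T}(X)}(\mu,\mu_{T}(X))=\|\mu_{T}(X)\|^{2}$, the minimum being attained at $\mu=\mu_{T}(X)$; this is the norm identity of (iii). Moreover $X$ is uniformly $T$-unstable $\iff$ some $\lambda\in Y(T)$ has $m(X,\lambda)\ge 1$ $\iff$ (by (i) and scaling) some $\lambda\in Y_{\Q}(T)$ has $(\mu,\lambda)>0$ for all $\mu\in K_{T}(X)$ $\iff$ $0\notin K_{T}(X)$ (one direction is separation of $0$ from a compact convex set; the reverse takes $\lambda=\mu_{T}(X)$ and the variational inequality) $\iff$ $\mu_{T}(X)\neq 0$; in the unstable case a sufficiently large positive integer multiple of $\mu_{T}(X)$ lies in $Y(T)$ and witnesses uniform $T$-instability, completing (iii). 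For (iv), for $\lambda\in Y(T)\setminus\{0\}$ part (i) gives $m(X,\lambda)/\|\lambda\|=\min_{\mu\in K_{T}(X)}(\mu,\lambda/\|\lambda\|)$, and since $\mu_{T}(X)\in K_{T}(X)$ the Cauchy--Schwarz inequality yields $m(X,\lambda)/\|\lambda\|\le(\mu_{T}(X),\lambda/\|\lambda\|)\le\|\mu_{T}(X)\|$, with equality throughout precisely when $\lambda$ is a positive rational multiple of $\mu_{T}(X)$, in which case the value is $\|\mu_{T}(X)\|$ by the identity just proved. Hence the one parameter subgroups of $T$ optimal for $X$ are exactly the positive rational multiples of $\mu_{T}(X)$ lying in $Y(T)$, among which $\lambda_{T}(X)$ is the unique primitive member; thus $\Delta_{X,T}=\{\lambda_{T}(X)\}$.

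I expect the one genuinely non-formal ingredient to be the convex-geometric core of (ii)--(iii): the existence, uniqueness, rationality and variational characterisation of the point of the rational polytope $K_{T}(X)$ closest to the origin. Everything else is bookkeeping with the weight-space decomposition and the isomorphism $\phi_{T}$, and the sign conventions for $m$ — the value $-\infty$ versus a finite negative minimum when $\lim_{\xi\to 0}\lambda(\xi)\cdot v$ fails to exist — never interfere, since the optimal value $\|\mu_{T}(X)\|$ is strictly positive.
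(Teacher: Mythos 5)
The paper gives no proof of this lemma---it is quoted from Slodowy \cite{Slod} with a ``Cf.''---so there is nothing in the source to compare against line by line. Your argument is correct and is essentially the standard derivation. In (i) the schematic-fibre computation gives $m(v,\lambda)=\min\{\langle\chi,\lambda\rangle\mid v_\chi\neq 0\}$, and taking the infimum over $v\in X$ and transporting through $\phi_T$ and the convex hull yields the stated identities; this is precisely the linear-action counterpart of the fibre computation the authors carry out explicitly for $U(\lambda)$ in Subsection~\ref{groupcase}, and it extends to $Y_\Q(T)$ by the scaling convention $m(v,\lambda)=n^{-1}m(v,n\lambda)$. Parts (ii)--(iv) are then the expected convex geometry on the rational polytope $K_T(X)$ (a convex hull of finitely many rational points, hence compact after extending scalars to $\BR$): existence and uniqueness of $\mu_T(X)$ by strict convexity of the squared norm, rationality of $\mu_T(X)$ because it is the orthogonal projection of the origin onto the rational affine hull of the minimal face containing it, the variational inequality $(\mu,\mu_T(X))\ge\|\mu_T(X)\|^2$ combined with (i) giving the norm identity in (iii), and Cauchy--Schwarz characterising the $T$-optimal one parameter subgroups as precisely the positive rational multiples of $\mu_T(X)$ lying in $Y(T)$, of which $\lambda_T(X)$ is the unique primitive one.

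You are right to flag the one point of friction in (i): with the paper's convention, $m(-,\lambda)$ takes values in $\Z_{\ge 0}\sqcup\{\pm\infty\}$, so the identity $m(X,\lambda)=\min_{\mu\in K_T(X)}(\mu,\lambda)$ is literally valid only when the right-hand side is nonnegative (otherwise the left side is $-\infty$ while the right side is a finite negative rational). As you observe, this is the only regime actually needed for (iii) and (iv), since optimality only ever involves $\lambda$ with $m(X,\lambda)\ge 1$, and the candidate $\mu_T(X)$ yields a strictly positive value; it is also the only regime in which the formula is invoked elsewhere in the paper. So the proposal is sound as a self-contained proof of the lemma in the form it is used.
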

\subsection{} \label{HessIsom} Resume the more general assumption that
$V$ is a $G$-variety. For $i \ge 0$ and $\lambda \in Y_{\Q}(G)$, we
denote by $V(\lambda)_i$ be the set of elements $v \in V$ with $m(v,
\lambda) \ge i$, a closed subvariety of $V$. Let $X \subset V$ be
uniformly unstable and suppose that $\lambda \in \Delta_X$ and $k =
m(X,\lambda)$. Then we define the {\em saturation} of $X$ to be
$S(X) = V(\Delta_X)_k$. This is well-defined by Theorem
\ref{KRthm}(\ref{vb}) and Lemma \ref{basiclem}. We call a set {\em
saturated} if it is uniformly unstable and equal to its own
saturation.

Assume, temporarily, again that $V$ is a $G$-module with $* = 0$.
We may grade $V$, with respect to $\lambda$, as a direct sum of
subspaces \begin{equation*} V(\lambda,i) =\, \left\{ v \in V
\,|\,\, \lambda(\xi)\cdot v = \xi^{i}v \mbox{ for all } \xi
 \in \k^{\times} \right\},
\end{equation*}
for $i \in \BZ$.
Then a saturated set $X \subset V$ may be written as
\begin{equation*} X =\, V(\Delta_X)_k = \,\textstyle{\bigoplus}_{i \ge k}\,
V(\lambda, i),\end{equation*} where $\lambda \in \Delta_X$ and $k =
m(X,\lambda)$. Letting $T$ be a maximal torus of $C_G(\im \lambda)$,
it is not hard to see that the $V(\lambda, i)$ are sums of weight
subspaces of $V$. More precisely,
$$X = \textstyle{\bigoplus}_{\langle \chi, \lambda \rangle \ge k}\,
V_{\chi}.$$ Since all maximal tori of $G$ are conjugate and $V$ has
finitely many $T$-weights, the number of conjugacy classes of
saturated subsets of $V$ is finite.

The following result of Hesselink shows that the description
of
saturated sets in the general situation, in which $V$ is a
$G$-variety, may be reduced to the above consideration. (Note that
since $*$ is $G$-invariant, the tangent space $\Tang_*V$ naturally
becomes a $G$-module.)

\begin{proposition}  {\em (Hesselink \cite[Proposition
3.8]{Hess2})}\label{linear} If $X$ is a saturated subset of $V$,
then $\Tang_*X$ is a saturated subset of $\Tang_*V$ which is isomorphic to
$X$ and satisfies $\Delta_{\Tang_*X} = \Delta_{X}$. The application of
$\Tang_*$ is a bijection from the class of saturated subsets of $V$ to
the class of saturated subsets of $\Tang_*V$. \end{proposition}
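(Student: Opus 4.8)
The plan is to reduce the whole statement to the already-understood linear case, where $V$ is a $G$-module and the saturated subsets are exactly the subspaces $\bigoplus_{i\ge k}V(\lambda,i)$. So I would fix $\lambda\in\Delta_X$ and set $k=m(X,\lambda)$ (this is independent of the choice by Lemma~\ref{basiclem}). Since $X$ is saturated, Theorem~\ref{KRthm}(ii) together with Lemma~\ref{basiclem} give $X=S(X)=V(\Delta_X)_k=V(\lambda)_k$; in particular $*\in X$ and the torus $\lambda(\k^\times)$ acts on $X$, contracting every point to $*$. The target is to show that $\Tang_*X$ is the subspace $(\Tang_*V)(\lambda)_k:=\bigoplus_{i\ge k}(\Tang_*V)(\lambda,i)$ of the $G$-module $\Tang_*V$, with $\Delta_{\Tang_*X}=\Delta_X$.

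To compute $\Tang_*X$: since $*$ is a non-singular $G$-fixed point, $\mathfrak{m}_*\subseteq\k[V]$ is $G$-stable, hence $\lambda$-graded, $\mathfrak{m}_*=\bigoplus_n(\mathfrak{m}_*)_n$ with $(\mathfrak{m}_*)_n=\k[V]_n$ for $n\ne 0$. Expanding the extended morphism $\xi\mapsto\lambda(\xi)\cdot v$ in terms of the $\lambda$-weight components of functions in $\mathfrak{m}_*$ gives, for $m\ge 1$,
\[
V(\lambda)_m=\bigl\{\,v\in V\ :\ f(v)=0\ \text{for all}\ f\in(\mathfrak{m}_*)_n,\ n\ge 1-m\,\bigr\}.
\]
Thus $X=V(\lambda)_k$ is the zero set of the ideal $J$ generated by $\bigoplus_{n\ge 1-k}(\mathfrak{m}_*)_n$; its image in $\mathfrak{m}_*/\mathfrak{m}_*^2$ is $\bigoplus_{n\ge 1-k}(\mathfrak{m}_*/\mathfrak{m}_*^2)_n$, whose annihilator in $\Tang_*V$ is exactly $(\Tang_*V)(\lambda)_k$, so $\Tang_*X\subseteq(\Tang_*V)(\lambda)_k$. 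For equality I would lift a $\lambda$-weight basis of $\mathfrak{m}_*/\mathfrak{m}_*^2$ to $\lambda$-weight-homogeneous $t_1,\dots,t_d\in\mathfrak{m}_*$ — possible because the torus $\lambda(\k^\times)$ is linearly reductive, so $\mathfrak{m}_*^2$ has a $\lambda$-stable complement in $\mathfrak{m}_*$ — and note that, $V$ being smooth at $*$, these form a regular system of parameters, $\wh{\mathcal O}_{V,*}\cong\k[[t_1,\dots,t_d]]$ with $t_i$ of weight $w_i$. Since $k\ge 1$, every weight-homogeneous element of the maximal ideal of weight $\ge 1-k$ is divisible by some $t_i$ with $w_i\ge 1-k$, whence $J\,\wh{\mathcal O}_{V,*}=(t_i:w_i\ge 1-k)$ and $\wh{\mathcal O}_{X,*}=\k[[t_i:w_i\le -k]]$ is regular of dimension $\dim(\Tang_*V)(\lambda)_k$; so $X$ is non-singular at $*$ with $\Tang_*X=(\Tang_*V)(\lambda)_k$. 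Finally $X$, carrying a contracting $\k^\times$-action whose fixed point $*$ lies in the closure of every orbit, is irreducible and $\k[X]$ is graded with $\k[X]_0=\k$ (positive-weight functions would blow up along orbits approaching $*$); a finitely generated graded $\k$-domain regular at its irrelevant maximal ideal is a polynomial ring, so the chosen generators yield a $\lambda$-equivariant isomorphism $X\xrightarrow{\ \sim\ }\Tang_*X$.

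It then remains to match the Kempf--Rousseau data and to see that $\Tang_*$ is bijective; for this I would establish the equality $m(X,\mu)=m(\Tang_*X,\mu)$ for every $\mu\in Y(G)$. One inequality is elementary: $X\subseteq V(\mu)_{m(X,\mu)}$, so $\Tang_*X\subseteq\Tang_*\!\bigl(V(\mu)_{m(X,\mu)}\bigr)=(\Tang_*V)(\mu)_{m(X,\mu)}$ by the computation above, giving $m(\Tang_*X,\mu)\ge m(X,\mu)$; the reverse I would get by fixing a $\mu$-weight-homogeneous regular system of parameters at $*$ (again using that $\mu(\k^\times)$ is a torus), obtaining a $\mu$-equivariant identification of the formal neighbourhoods of $*\in V$ and $0\in\Tang_*V$, and noting that $m(v,\mu)$ depends only on the formal germ at $\xi=0$ of the $\mu$-equivariant curve $\xi\mapsto\mu(\xi)\cdot v$ while, $X$ being $\lambda$-conical, the infimum defining $m(X,\mu)$ is attained on a subset of $X$ accumulating at $*$. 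Granted this, $\lambda$ is optimal for $\Tang_*X$ since it is optimal for $X$, so $\lambda\in\Delta_{\Tang_*X}$ and hence $P(\Tang_*X)=P(\lambda)=P(X)$ and, by Theorem~\ref{KRthm}(ii), $\Delta_{\Tang_*X}=\{g\cdot\lambda:g\in P(\lambda)\}=\Delta_X$. Injectivity of $\Tang_*$ on saturated subsets is then immediate, $X$ being recovered as $V(\lambda)_k$ from $\lambda\in\Delta_{\Tang_*X}$ and $k=m(\Tang_*X,\lambda)$; surjectivity follows because for a saturated $Y=(\Tang_*V)(\lambda)_k$ the subvariety $V(\lambda)_k$ of $V$ has tangent space $Y$, lies in every $V(\mu)_{m(Y,\mu)}$, hence has $\lambda$ optimal and is saturated.

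The step I expect to be the main obstacle is this last comparison of the instability measure between $V$ and $\Tang_*V$ — making precise that, for a saturated set, $m(-,\mu)$ is controlled by an infinitesimal neighbourhood of $*$ — together with the tangent-space computation $\Tang_*\!\bigl(V(\lambda)_k\bigr)=(\Tang_*V)(\lambda)_k$, which rests on the weight-homogeneous regular system of parameters and on $V$ being smooth at $*$. Reassuringly, every reductivity invoked is that of tori, which holds in all characteristics, so the argument imposes no restriction on $p$.
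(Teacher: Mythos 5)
The first half of your proof is correct and nicely done: the identification $\Tang_*X=(\Tang_*V)(\lambda)_k$ via $\lambda$-weight-homogeneous parameters, the smoothness of $X$ at $*$, the graded isomorphism $X\cong\Tang_*X$, and the inequality $m(\Tang_*X,\mu)\ge m(X,\mu)$ are all sound. The gap is in the sketch of the reverse inequality $m(\Tang_*X,\mu)\le m(X,\mu)$ for arbitrary $\mu\in Y(G)$. A $\mu$-weight-homogeneous regular system of parameters does give a $\mu$-equivariant identification of $\wh{V}_*$ with $\wh{(\Tang_*V)}_0$, and $m(v,\mu)$ is indeed a formal-local invariant of the curve $\xi\mapsto\mu(\xi)\cdot v$; but this formal isomorphism has no reason to carry $\wh{X}_*$ onto $\wh{(\Tang_*X)}_0$, since the ideal of $X$ is cut out by $\lambda$-weight-homogeneous --- not $\mu$-weight-homogeneous --- parameters, and $X$ is generically not $\mu$-stable. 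The remark that ``the infimum is attained on a subset of $X$ accumulating at $*$'' does not overcome this. The obstruction is genuine: for the smooth $\lambda$-conical hypersurface $Y=\{z=xy\}\subset\k^3$ with $\lambda=(1,1,2)$, $\mu=(2,2,1)$, one has $m(Y,\mu)=1$ but $m(\Tang_0Y,\mu)=2$, so an argument resting only on $\lambda$-conicality and smoothness at $*$ cannot work, and your sketch never explains where saturatedness of $X$ enters the reverse bound.

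Fortunately, much less is needed, and your own constructions supply it. It suffices to have $m(X,\mu)=m(\Tang_*X,\mu)$ for $\mu\in Y(T)$ with $T$ a maximal torus of $G$ containing $\im\lambda$: then $X=V(\lambda)_k$ is $T$-stable, so choosing $T$-weight-homogeneous parameters makes your isomorphism $X\cong\Tang_*X$ a $T$-equivariant one, hence $\mu$-equivariant, and the equality is immediate. Now $\Tang_*X=(\Tang_*V)(\lambda)_k$ is $P(\lambda)$-stable by Lemma~\ref{basiclem}, so $P(\lambda)\subseteq G_{\Tang_*X}\subseteq P(\Tang_*X)$ by Theorem~\ref{KRthm}(iv), and $T$ is a maximal torus of $P(\Tang_*X)$. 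By Theorem~\ref{KRthm}(iii) the set $\Delta_{\Tang_*X}\cap Y(T)$ is a single primitive element, and it maximizes $m(\Tang_*X,\cdot)/\|\cdot\|$ over $Y(T)$; since that ratio equals $m(X,\cdot)/\|\cdot\|$ on $Y(T)$, whose maximizer over $Y(T)$ is $\lambda=\lambda_T(X)$ by Theorem~\ref{KRthm}(iii) applied to $X$, we get $\lambda\in\Delta_{\Tang_*X}$. Hence $P(\Tang_*X)=P(\lambda)=P(X)$ and $\Delta_{\Tang_*X}=P(\lambda)\cdot\lambda=\Delta_X$ by Theorem~\ref{KRthm}(ii), so $\Tang_*X$ is saturated and the bijectivity of $\Tang_*$ follows as you describe.
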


In particular, the saturated sets in the adjoint action of $G$ on
itself are connected unipotent subgroups.

By virtue of Proposition \ref{HessIsom} we may implicitly identify a
saturated set with its tangent space, so that Lemma \ref{Slodowy}
now makes sense for arbitrary saturated sets. We now gather some
basic facts about saturated sets that will be useful later. First we
need the following definitions. Given a uniformly $G$-unstable
subset $X$ of $V$ we define
$$\| X \| := \,\min\big\{\| \mu_T(g\cdot X) \|\,\colon\,\,g\in\ G,\ \,0\not\in K_T(g\cdot X)\}.$$
Note that $\| X \|$ is the minimal distance from the origin to a
point in a finite union of polytopes of the form $K_T(g\cdot X)$ for
{\it some} $g\in G$, and it is independent of the choice of $T$. It
follows from Lemma~\ref{opti} that
$\|X\|\,=\inf\{\|\lambda\|\,\colon\,\,\lambda\in Y(G),\
m(X,\lambda)\ge 1\}$ (cf. \cite{Hess2}, p.~143).

\begin{lemma} \label{saturated}  Let $X$ and $Y$ be uniformly unstable subsets of $V$.
\begin{enumerate}[{\em (i)}]
\item \label{sati} $S(X)$ is uniformly unstable, $\Delta_{S(X)} = \Delta_X$ and
$\tilde{\Delta}_{S(X)} = \tilde{\Delta}_X$.
\item \label{satii} $\tilde{\Delta}_X = \tilde{\Delta}_Y$ if and only if
$Y \subset S(X)$ and $\| X \| = \| Y \|$.
\item \label{satiii} $X \subset S(X) = S(S(X))$.
\item \label{sativ} If $X \subset Y$, then $\| X \| \ge \| Y \|$.
\item \label{satv} If $g \in G$, then $g \cdot S(X) = S(g \cdot X)$.\end{enumerate}   \end{lemma}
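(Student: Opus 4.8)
The plan is to reduce everything to the combinatorial picture provided by Lemma~\ref{opti}, using the linearisation of Proposition~\ref{linear} so that we may work in a finite-dimensional $G$-module and with the polytopes $K_T(X)$. Throughout, fix $\lambda\in\Delta_X$ and put $k=m(X,\lambda)$, so that by definition $S(X)=V(\Delta_X)_k=V(\lambda)_k$ whenever $\lambda\in\Delta_X$; the last equality follows from Theorem~\ref{KRthm}(\ref{vb}) together with Lemma~\ref{basiclem}, since $\Delta_X$ is a single $P(X)$-orbit and $m(\cdot,\lambda)$ is $P(\lambda)$-invariant. First I would prove (iii) and (v), which are almost immediate: $X\subset S(X)$ holds because $m(v,\lambda)\ge k$ for every $v\in X$; $S(X)=S(S(X))$ and $g\cdot S(X)=S(g\cdot X)$ follow from the fact that $S(X)$ and $X$ have the same optimal class and the same value of $m$, combined with Theorem~\ref{KRthm}(iv) (which gives $\Delta_{g\cdot X}=g\Delta_X g^{-1}$ and hence $\|g\cdot X\|=\|X\|$, and more generally that saturation commutes with the $G$-action). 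This also yields part~(i) except for the assertion $\tilde\Delta_{S(X)}=\tilde\Delta_X$: for that one additionally checks $m(S(X),\lambda)=m(X,\lambda)=k$ — the inequality $\le$ is clear from $X\subset S(X)$, and $\ge$ is the definition of $V(\lambda)_k$ — and then invokes Remark~\ref{Hessopt}, which expresses $\tilde\Delta$ as $\frac1{m(X,\lambda)}\Delta_X$.

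Next I would handle (iv). If $X\subset Y$ then for every maximal torus $T$ and every $g\in G$ we have $S_T(g\cdot X)\subset S_T(g\cdot Y)$, hence $K_T(g\cdot X)\subset K_T(g\cdot Y)$; consequently $0\notin K_T(g\cdot Y)$ forces $0\notin K_T(g\cdot X)$, so the set over which the minimum defining $\|Y\|$ is taken contains the corresponding set for $\|X\|$ only after this containment is accounted for correctly. More transparently, using the reformulation recorded just before the lemma, $\|X\|=\inf\{\|\lambda\|:\lambda\in Y(G),\ m(X,\lambda)\ge1\}$ and $\|Y\|=\inf\{\|\lambda\|:\lambda\in Y(G),\ m(Y,\lambda)\ge1\}$; since $X\subset Y$ gives $m(Y,\lambda)=\inf_{v\in Y}m(v,\lambda)\le\inf_{v\in X}m(v,\lambda)=m(X,\lambda)$, every $\lambda$ destabilising $Y$ also destabilises $X$, so the infimum for $X$ is over a larger set and $\|X\|\ge\|Y\|$. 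This is the cleanest route and I would phrase (iv) this way.

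The main work is part (ii), and I expect the forward direction to be routine and the converse to be the real obstacle. If $\tilde\Delta_X=\tilde\Delta_Y$, then in particular $\Delta_X$ and $\Delta_Y$ are scalar multiples of one another, so $P(X)=P(Y)$ and $\|X\|=\|Y\|$ (via Lemma~\ref{opti}(iii), which ties $\|\mu_T(X)\|$ to $m(X,\mu_T(X))$); picking $\lambda$ in the common optimal class and comparing the values of $m$ — which agree because $\|\cdot\|$ and the optimal ray together pin down $m$ through the equality in Lemma~\ref{opti}(\ref{normsquared}) — gives $m(Y,\lambda)\ge m(X,\lambda)=k$, i.e. $Y\subset V(\lambda)_k=S(X)$. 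For the converse, assume $Y\subset S(X)$ and $\|X\|=\|Y\|$. Fix a maximal torus $T$ of $P(X)$ and let $\lambda_T=\lambda_T(X)$ be the primitive optimal cocharacter in $Y(T)$. From $Y\subset S(X)=\bigoplus_{\langle\chi,\lambda_T\rangle\ge k}V_\chi$ (after linearising by Proposition~\ref{linear}) we get $S_T(Y)\subset\{\chi:\langle\chi,\lambda_T\rangle\ge k\}$, so $K_T(Y)$ lies in the half-space $\{(\mu,\lambda_T)\ge k\}$ while $\mu_T(X)\in K_T(X)\subset S(X)$ achieves $(\mu_T(X),\lambda_T)=k=\|\mu_T(X)\|^2$. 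The hard point is to show $\mu_T(Y)=\mu_T(X)$: we know $\|\mu_T(Y)\|=\|Y\|=\|X\|=\|\mu_T(X)\|$, and $\mu_T(Y)\in K_T(Y)$ lies in the half-space $(\mu,\lambda_T)\ge k$; since $\mu_T(X)$ is the unique point of minimal norm in that half-space (it is the foot of the perpendicular from the origin, as $(\mu_T(X),\lambda_T)=\|\mu_T(X)\|^2$ means $\mu_T(X)$ is parallel to $\lambda_T$ up to the scaling dictated by $k$), any vector of norm $\|\mu_T(X)\|$ in that half-space must equal $\mu_T(X)$. Hence $\mu_T(Y)=\mu_T(X)$, so $\lambda_T(Y)=\lambda_T(X)$ by Lemma~\ref{opti}(\ref{uniqueintorus}), and then $\Delta_Y=\Delta_{\lambda_T(Y)}=\Delta_{\lambda_T(X)}=\Delta_X$ by Theorem~\ref{KRthm}(iii); dividing by the common value $m(X,\lambda_T)=m(Y,\lambda_T)=k$ (which is forced since both equal $\|\mu_T(X)\|^2$ by Lemma~\ref{opti}(\ref{normsquared})) gives $\tilde\Delta_X=\tilde\Delta_Y$ by Remark~\ref{Hessopt}. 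The geometric "unique nearest point in a half-space" argument is where care is needed — one must be sure that $\mu_T(Y)$ actually lands in the same half-space and not merely in $K_T(Y)\subset K_T(S(X))$ — but it reduces to elementary Euclidean convexity in $Y_\Q(T)$ once the weight-space description of $S(X)$ is in hand.
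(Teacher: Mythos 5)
The paper offers no argument of its own for this lemma — it is dismissed with ``straightforward exercise, cf.\ Hesselink [Hess2, Lemma 2.8]'' — so your attempt is a genuine reconstruction, and the overall plan (linearise via Proposition~\ref{linear}, work with $K_T$, $\mu_T$, $\lambda_T$ and the polytope Lemma~\ref{opti}) is the right one. Parts (i), (iii), (v) and the forward direction of (ii) are essentially correct, modulo two small slips: when you say $\Delta_X$ and $\Delta_Y$ are ``scalar multiples of one another'' you should note they actually coincide, since both consist of \emph{primitive} cocharacters; and your claim $(\mu_T(X),\lambda_T)=k=\|\mu_T(X)\|^2$ is only half right — the first equality holds, but $\|\mu_T(X)\|^2 = c\,k$ where $c$ is the positive rational with $\mu_T(X)=c\,\lambda_T$, so the second equality fails unless $c=1$. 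This does not damage the geometric conclusion you draw (that $\mu_T(X)=\frac{k}{\|\lambda_T\|^2}\lambda_T$ is the foot of the perpendicular from the origin to the boundary of the half-space $\{(\mu,\lambda_T)\ge k\}$), but the intermediate identity should be corrected.

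There is, however, a genuine logical error in your proof of (iv). You correctly observe that $X\subset Y$ forces $m(Y,\lambda)\le m(X,\lambda)$, hence $\{\lambda:m(Y,\lambda)\ge 1\}\subseteq\{\lambda:m(X,\lambda)\ge 1\}$. But then the infimum of $\|\lambda\|$ over the \emph{larger} set is \emph{smaller}, so the formula $\|X\|=\inf\{\|\lambda\|:m(X,\lambda)\ge1\}$ would give $\|X\|\le\|Y\|$ — the wrong direction. The fix is to argue directly from the primary definition: since $K_T(g\cdot X)\subseteq K_T(g\cdot Y)$ for every $g$, the minimum-norm element of the larger polytope has norm at most that of the smaller, so $\|\mu_T(g\cdot X)\|\ge\|\mu_T(g\cdot Y)\|$ whenever both are defined, and $0\notin K_T(g\cdot Y)$ forces $0\notin K_T(g\cdot X)$; from these two facts one deduces $\|X\|\ge\|Y\|$ without reference to the auxiliary infimum formula. (Equivalently, argue via $\|X\|=\sup_\lambda m(X,\lambda)/\|\lambda\|$, which increases when the set shrinks.) Relatedly, in your converse of (ii) you assert ``$\|\mu_T(Y)\|=\|Y\|=\|X\|=\|\mu_T(X)\|$'' without proof. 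The equality $\|X\|=\|\mu_T(X)\|$ holds because $T\subset P(X)$ is an optimal torus for $X$, but $T$ is not a priori optimal for $Y$, so $\|\mu_T(Y)\|=\|Y\|$ is not immediate. What is true is the chain $\|Y\| \ge \|\mu_T(Y)\| \ge \|\mu_T(X)\| = \|X\| = \|Y\|$ (the first from optimality applied to $Y$, the second from your half-space argument, the last by hypothesis), which forces equality throughout and in particular gives $\mu_T(Y)=\mu_T(X)$ by uniqueness of the minimal-norm point in the half-space. Spelling that chain out would close the gap and make the argument complete.
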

\begin{proof} This is a straightforward exercise. Cf. \cite[Lemma 2.8]{Hess2}. \end{proof}

\subsection{} \label{strata} Following \cite[\S 4]{Hess2} now define
some equivalence relations on $\N_V$. For $x, y \in \N_V$ we set
\begin{eqnarray*}
x \approx  y     &  \Leftrightarrow & \tilde{\Delta}_x = \tilde{\Delta}_y; \\
x \sim     y     &  \Leftrightarrow & \tilde{\Delta}_{g\cdot x} = \tilde{\Delta}_y
\mbox{ for some } g \in G. \end{eqnarray*}
 \noindent We call an equivalence class $[ v ] = \{ x\,|\,\, x \approx v \}$ a
 {\em blade} and an equivalence class $G[ v ] = \{ x\,|\,\, x \sim v \}$ a {\em stratum}.
 Hesselink gives the following description of blades and strata.

\begin{lemma} \label{bladestrataprop} Let $v \in \N_V$. Then \begin{enumerate}[{\em (i)}]
\item \label{bsi} $[v] = \{ x \in S(v)\,\colon\,\,  \| x \| = \| v \| \}$.
\item \label{bsii} $[v]$ is open and dense in $S(v)$.
\item \label{bsiii} $GS(v)$ is an irreducible closed subset of $\N_V$.
\item \label{bsiv} $G[v]\, =\, \{ x \in GS(v)\,\colon\,\,  \| x \| = \| v \| \}$.
\item \label{bsv} $G[v]$ is open and dense in $GS(v)$.
\end{enumerate}   \end{lemma}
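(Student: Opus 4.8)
The plan is to read off parts (\ref{bsi}) and (\ref{bsiv}) from Lemma~\ref{saturated} together with the $G$-invariance of $\|\cdot\|$, to prove (\ref{bsiii}) by combining the irreducibility of $S(v)$ with a properness argument, to prove (\ref{bsii}) by reducing via Proposition~\ref{linear} to the case of a linear representation and then arguing with nearest-point maps on the Newton polytopes $K_T(\cdot)$, and finally to deduce (\ref{bsv}) from (\ref{bsii}), (\ref{bsiii}) and (\ref{bsiv}) using properness of the collapsing map $G\times^{P(v)}S(v)\to GS(v)$. Throughout I assume $v\neq *$, the case $v=*$ being degenerate.

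For (\ref{bsi}): by definition $[v]=\{x\in\N_V:\tilde\Delta_x=\tilde\Delta_v\}$, and Lemma~\ref{saturated}(\ref{satii}) applied with $X=\{v\}$ and $Y=\{x\}$ says precisely that $\tilde\Delta_x=\tilde\Delta_v$ if and only if $x\in S(v)$ and $\|x\|=\|v\|$. For (\ref{bsiv}) I would first record $GS(v)=\bigcup_{g\in G}g\cdot S(v)=\bigcup_{g\in G}S(g\cdot v)$ by Lemma~\ref{saturated}(\ref{satv}). Given $x\in GS(v)$, write $x=g\cdot s$ with $s\in S(v)$; then $\|x\|=\|s\|$, and $x\sim v$ iff $s\sim v$, which — since $s\in S(v)$, and since $s\sim v$ forces $\|s\|=\|v\|$ — is equivalent by (\ref{bsi}) to $\|s\|=\|v\|$. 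Hence $G[v]=\{x\in GS(v):\|x\|=\|v\|\}$.

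For (\ref{bsiii}): since $\Delta_v$ is a single $P(v)$-orbit (Theorem~\ref{KRthm}) and each $V(\lambda)_k$ is $P(\lambda)$-stable by Lemma~\ref{basiclem}, the set $S(v)=V(\Delta_v)_k$ is stable under $P(v)=P(\lambda)$. Thus $GS(v)$ is the image of the collapsing morphism $G\times^{P(v)}S(v)\hookrightarrow G/P(v)\times V\to V$; as $G/P(v)$ is complete this morphism is proper, so $GS(v)$ is closed in $\N_V$. For irreducibility, Proposition~\ref{linear} gives an isomorphism $S(v)\xrightarrow{\ \sim\ }\Tang_*S(v)$, and $\Tang_*S(v)$ is a saturated subset of the $G$-module $\Tang_*V$, hence — by the description $\bigoplus_{i\ge k}V(\lambda,i)$ of saturated subsets of a module recalled above — a linear subspace; so $S(v)$ is irreducible, and $GS(v)$, being its image under the action map $G\times S(v)\to V$, is irreducible too.

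For (\ref{bsii}) and (\ref{bsv}): by Proposition~\ref{linear} I may assume $V$ is a $G$-module with $*=0$ and $S(v)=\bigoplus_{i\ge k}V(\lambda,i)$, where $\lambda\in\Delta_v$ is primitive and $k=m(v,\lambda)$; fix a maximal torus $T$ with $\im\lambda\subseteq T$, so the $V(\lambda,i)$ are sums of $T$-weight spaces, $K_T(S(v))$ lies in the half-space $\{y:(y,\lambda)\ge k\}$, and by Lemma~\ref{opti}(\ref{normsquared}),(\ref{uniqueintorus}) the point of $K_T(S(v))$ nearest the origin is $\mu_T(v)=k\|\lambda\|^{-2}\lambda$. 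The crucial step is to show that for $x\in S(v)\setminus\{0\}$ one has $x\in[v]$ iff (a) the component of $x$ in $V(\lambda,k)$ is nonzero and (b) $\mu_T(v)\in K_T(x)$: granting (b), $\mu_T(v)$ is also the nearest point of $K_T(x)$ (it already realises the minimum over the larger half-space), so $\lambda$ is the primitive optimal one-parameter subgroup of $x$ in $Y(T)$ by Lemma~\ref{opti}(\ref{uniqueintorus}), whence $\Delta_x=P(\lambda)\cdot\lambda=\Delta_v$, while (a) gives $m(x,\lambda)=k=m(v,\lambda)$, so $\tilde\Delta_x=\tilde\Delta_v$; the converse is the same computation in reverse. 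Condition (a) clearly defines an open subset of $S(v)$, and (b) defines the union, over finite weight sets $\Psi$ with $\mu_T(v)$ in the convex hull of $\phi_T(\Psi)$, of the open sets $\{x:S_T(x)\supseteq\Psi\}$ (Carath\'eodory's theorem), hence also an open subset; since $v\in[v]$ and $S(v)$ is irreducible, $[v]$ is dense open in $S(v)$, proving (\ref{bsii}). Then (\ref{bsv}): $[v]$ is $P(v)$-stable (Lemma~\ref{basiclem} and Theorem~\ref{KRthm}), the collapsing map $\pi\colon G\times^{P(v)}S(v)\to GS(v)$ of (\ref{bsiii}) is proper and surjective, and $\pi^{-1}(G[v])=G\times^{P(v)}[v]$ — here one uses (\ref{bsi}), since a point of $S(v)$ lying in $G[v]$ has norm $\|v\|$ and hence lies in $[v]$; therefore $GS(v)\setminus G[v]=\pi\bigl(G\times^{P(v)}(S(v)\setminus[v])\bigr)$ is closed, so $G[v]$ is open in $GS(v)$, and it is dense there because $[v]$ is dense in $S(v)$ and $G\times S(v)\to GS(v)$ is surjective. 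The main obstacle is the convexity characterisation (a)$\wedge$(b) in (\ref{bsii}): it rests on translating the equality $\tilde\Delta_x=\tilde\Delta_v$ into a statement about the nearest-point maps $\mu_T$ via Lemma~\ref{opti}, and on verifying that the reduction of Proposition~\ref{linear} transports the data $m(\,\cdot\,,\lambda)$, $\Delta_{(\,\cdot\,)}$ and $K_T(\,\cdot\,)$ attached to individual elements of $S(v)$, not merely those attached to the saturated set $S(v)$ itself.
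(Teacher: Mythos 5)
Parts (\ref{bsi}), (\ref{bsiii}), (\ref{bsiv}) and, given (\ref{bsii}), part (\ref{bsv}) are argued correctly: (\ref{bsi}) is indeed Lemma~\ref{saturated}(\ref{satii}) applied to singletons, (\ref{bsiv}) follows from (\ref{bsi}) and $G$-invariance of $\|\cdot\|$ and $S(\cdot)$, the properness/linearisation argument for (\ref{bsiii}) is sound, and the collapsing-map argument for (\ref{bsv}) is the standard one. Note that the paper itself does not prove this lemma but cites Hesselink~\cite{Hess2}.

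The proof of (\ref{bsii}), however, contains a genuine error. Your claimed equivalence ``for $x\in S(v)\setminus\{0\}$, $x\in[v]$ iff (a) $x_k\neq 0$ and (b) $\mu_T(v)\in K_T(x)$'' fails in the backward direction. Granting (b), you correctly deduce $\mu_T(x)=\mu_T(v)$ and hence, via Lemma~\ref{opti}(\ref{uniqueintorus}), that $\lambda$ is $T$-optimal for $x$; but the step ``whence $\Delta_x=P(\lambda)\cdot\lambda=\Delta_v$'' requires $\lambda\in\Delta_x$, i.e.\ optimality of $\lambda$ for $x$ in $Y(G)$, not merely in $Y(T)$, and that does not follow. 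Concretely, conditions (a)$\wedge$(b) are equivalent (by Lemma~\ref{perp}, since $K_T(x)\cap\{(\cdot,\lambda)=k\}=K_T(x_k)$) to saying $x_k$ is \emph{$T^\lambda$-semistable}, whereas the Kirwan--Ness theorem (Theorem~\ref{Tsujii}) shows that $\lambda$ is optimal for $x$ precisely when $x_k$ is \emph{$L^\perp(\lambda)$-semistable}. The latter is strictly stronger whenever $\mathcal{D}L(\lambda)\neq 1$. For a counterexample, take $G=\SL_4$, $V=\mathfrak{sl}_4$ with the adjoint action, and $\lambda=\mathrm{diag}(t,t,t^{-1},t^{-1})$, so that $V(\lambda,2)\cong M_2(\k)$ as a module for $L^\perp(\lambda)$, the latter containing a copy of $\SL_2\times\SL_2$ acting by $(A,B)\cdot X=AXB^{-1}$, with sole invariant $\det$. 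Let $v\in S(v)$ have $v_2$ the identity matrix (so $\lambda\in\Delta_v$, $k=2$, and $\mu_T(v)$ is the centroid $\tfrac12(\epsilon_1+\epsilon_2-\epsilon_3-\epsilon_4)$), and let $x\in S(v)$ have $x_2$ a generic rank-one matrix. Then $x_2\neq0$, $\mu_T(v)$ is the centroid of the four $T$-weights of $x_2$ and hence lies in $K_T(x)$, so (a) and (b) hold; yet $\det(x_2)=0$ means $x_2$ is $L^\perp(\lambda)$-unstable, so by Theorem~\ref{Tsujii} the cocharacter $\lambda$ is not optimal for $x$, and $x\notin[v]$. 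Thus your conditions (a)$\wedge$(b) carve out a set strictly larger than $[v]$, and the openness argument for $[v]$ collapses.

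The standard repair is precisely Theorem~\ref{Tsujii}: having reduced via Proposition~\ref{linear} to the linear case, one has
\begin{equation*}
[v]=\big\{x\in S(v)\,\colon\,x_k\neq0\ \text{and}\ x_k\ \text{is } L^\perp(\lambda)\text{-semistable}\big\},
\end{equation*}
and this is open in $S(v)$ because the map $x\mapsto x_k$ is a linear projection and the $L^\perp(\lambda)$-semistable locus is the complement of the closed nullcone $\N_{V(\lambda,k)}$; density then follows as you argued from irreducibility of $S(v)$ and $v\in[v]$. Your polytope condition (b) is the correct criterion only when $L^\perp(\lambda)$ is a torus; in general it captures only $T^\lambda$-semistability.
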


We will eventually show that when $V=G_{\uni}$ the strata are
precisely Lusztig's unipotent pieces. To that end the following
result will be crucial.
\begin{proposition} \label{stratasetminus}
Let $v \in V$. Then
\begin{equation*}  G[v] \, = \, GS(v) \setminus \textstyle{\bigcup}\, GS(v'),      \end{equation*}
where the union is taken over all saturated sets $S(v')$ such that
$GS(v') \subsetneqq GS(v)$. \end{proposition}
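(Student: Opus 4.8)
The plan is to prove both inclusions, using Lemma~\ref{bladestrataprop} and Lemma~\ref{saturated} as the main tools. First I would observe that by Lemma~\ref{bladestrataprop}(\ref{bsiv}) we have $G[v] = \{x \in GS(v) : \|x\| = \|v\|\}$, so it suffices to identify the complement $GS(v) \setminus G[v]$ with the union $\bigcup GS(v')$ taken over saturated sets $S(v')$ with $GS(v') \subsetneqq GS(v)$. For the inclusion $\supseteq$: if $S(v')$ is saturated with $GS(v') \subsetneqq GS(v)$, then by Lemma~\ref{saturated}(\ref{sativ}) applied to $v' \in S(v') \subseteq GS(v)$ (after moving $v'$ by an element of $G$ into $S(v)$, which is permissible since $\|\cdot\|$ is $G$-invariant and $GS(v') \subseteq GS(v)$ forces the relevant containment), we get $\|v'\| \ge \|v\|$. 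If equality held, then by Lemma~\ref{saturated}(\ref{satii}) — using $v' \in S(v)$ (up to $G$-conjugacy) together with $\|v'\| = \|v\|$ — we would obtain $\tilde\Delta_{v'} = \tilde\Delta_v$, hence $S(v') = S(v)$ by Lemma~\ref{saturated}(\ref{sati}), contradicting $GS(v') \subsetneqq GS(v)$. So $\|v'\| > \|v\|$, and the same strict inequality propagates to all of $S(v')$ and hence all of $GS(v')$, so every point of $GS(v')$ lies in $GS(v) \setminus G[v]$.

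For the reverse inclusion $\subseteq$: let $x \in GS(v) \setminus G[v]$. Then $x \in GS(v)$ but $\|x\| \ne \|v\|$; in fact $\|x\| > \|v\|$ since $x \in GS(v)$ implies (moving by $G$) that $x \in S(v)$ up to conjugacy and $\|\cdot\|$ is the minimal-norm quantity, so $\|x\| \ge \|v\|$ by Lemma~\ref{saturated}(\ref{sativ}) applied with $X = S(v)$ and the containment of $x$, and equality is excluded. Now take $v' := x$ itself. Since $x \in \N_V$, it is uniformly unstable and $S(v') = S(x)$ is a well-defined saturated set with $x \in S(x)$. I claim $GS(x) \subseteq GS(v)$: indeed $x \in GS(v)$, and by Lemma~\ref{saturated}(\ref{satiii}) together with the fact (from Lemma~\ref{bladestrataprop}(\ref{bsiii})) that $GS(v)$ is closed and $G$-stable, the irreducible closed $G$-stable set $GS(x)$ is contained in the $G$-stable closed set $GS(v)$. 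Finally $GS(x) \ne GS(v)$ because $\|x\| > \|v\| = \|v\|$ would force, if $GS(x) = GS(v)$, that $v \in GS(x)$ and hence $\|v\| \ge \|x\|$, a contradiction. Thus $GS(v') \subsetneqq GS(v)$ and $x \in GS(v')$, completing the inclusion.

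The step I expect to be the main obstacle is handling the passage between a point lying in the \emph{orbit} $GS(v)$ of a saturated set and lying in the saturated set $S(v)$ itself, i.e., correctly conjugating by $G$ so that the norm comparisons in Lemma~\ref{saturated}(iv) and the characterization in Lemma~\ref{saturated}(ii) can be applied; one must be careful that $\|\cdot\|$ is genuinely $G$-invariant (it is, by construction in Subsection~\ref{KRthm}) and that $G$-conjugating $S(v')$ inside $GS(v)$ does not disturb the strict-versus-non-strict containment $GS(v') \subsetneqq GS(v)$. Once that bookkeeping is pinned down, the rest is a direct application of the already-established properties of saturation, blades, and strata, and in particular Lemma~\ref{bladestrataprop}(\ref{bsiv})--(\ref{bsv}) which guarantee that $G[v]$ is exactly the top-norm part of $GS(v)$.
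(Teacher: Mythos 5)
Your overall strategy matches the paper's: reduce the proposition to the statement that, for $v,v'\in\N_V$ with $GS(v')\subseteq GS(v)$, one has $GS(v')=GS(v)$ if and only if $\|v'\|=\|v\|$, and then prove that equivalence using Lemma~\ref{saturated}(\ref{satii}),(\ref{sativ}),(\ref{satv}) and the $G$-invariance of $\|\cdot\|$. Your $\supseteq$ direction is a correct rendering of this, and the norm manipulations in the first paragraph are essentially the paper's argument.

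The gap is in your $\subseteq$ direction, at the claim $GS(x)\subseteq GS(v)$. You justify this by saying that $x\in GS(v)$, that $GS(v)$ is closed and $G$-stable (Lemma~\ref{bladestrataprop}(\ref{bsiii})), and invoking Lemma~\ref{saturated}(\ref{satiii}). But those facts only give $\overline{Gx}\subseteq GS(v)$: any closed $G$-stable set containing $x$ contains the \emph{orbit closure} of $x$. They say nothing about $GS(x)$, because $GS(x)=\overline{G[x]}$ (Lemma~\ref{bladestrataprop}(\ref{bsv})), and the blade $[x]$ — hence $G[x]$ — is in general strictly larger than the orbit $Gx$; indeed this is exactly the situation of interest in bad characteristic, where pieces are not single orbits. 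So the inference is a non-sequitur: there is no a priori reason why an irreducible closed $G$-stable set passing through $x$ must contain the other irreducible closed $G$-stable set $GS(x)$.

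What is actually needed here is that $GS(v)$ is a union of strata (equivalently, $x\in GS(v)$ implies $GS(x)\subseteq GS(v)$), which is a nontrivial part of Hesselink's stratification theory and is not among the facts recorded in Lemma~\ref{bladestrataprop} or Lemma~\ref{saturated}. The paper's own proof implicitly uses the same fact when it asserts that proving the norm equivalence ``is sufficient,'' without spelling out the reduction; it is presumably relying on Hesselink's theorem in the background. The difference is that you attempt to justify the step and give an argument that does not hold. To repair the proof you would either need to quote the relevant closure-of-strata statement from \cite{Hess2}, or supply a genuine argument (e.g.\ via the Kirwan--Ness criterion applied to the lowest $\lambda$-graded component of $x$ inside $S(v)$) that $GS(x)\subseteq GS(v)$ whenever $x\in GS(v)$.
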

\begin{proof} Let $v,v'\in \N_V$ be such that $GS(v')
\subseteq GS(v)$. In order to prove the proposition, it is
sufficient to show that $GS(v') = GS(v)$ if and only if $\| v \| =
\| v' \|$.

Suppose that $GS(v') = GS(v)$. Then there exists $g \in G$ such that
$g \cdot v' \in S(v)$. Hence $\| v' \| = \| g \cdot v' \| \ge \| S(v) \| = \| v
\|$ by Lemma \ref{saturated}. Similarly we can find $h \in G$ such
that $h \cdot v \in S(v')$ and deduce that $\| v' \|  \le \| v \|$, and
thus $\| v' \| =  \| v \|$.

Conversely, suppose that $\| v' \|  =  \| v \|$. Since $GS(v')
\subseteq GS(v)$, there exists $g \in G$ such that $g \cdot v' \in S(v)$.
Then Lemma \ref{saturated}(\ref{satii}) yields $\tilde{\Delta}_{g \cdot v'}
= \tilde{\Delta}_{v}$, and so $S(g \cdot v') = S(v)$. Hence $g \cdot S(v') = S(v)$
by Lemma \ref{saturated}(\ref{satv}). It follows that $GS(v') =
GS(v)$. \end{proof}

\section{A modification of the Kirwan--Ness theorem}\label{KirNess}
\subsection{} Let $\lambda \in Y(G) \setminus \{ 0 \}$ and let $T$ be a maximal torus of
$G$ containing $\im \lambda$. (This is equivalent to $T$ being a
maximal torus of $L(\lambda)$.) Then we define \begin{eqnarray*}
T^{\lambda} & := & \left\langle \im \mu  \ |\ \mu \in Y(T), \ (\mu,
\lambda ) = 0 \right\rangle, \\  L^{\perp}(\lambda ) & := &
\left\langle T^{\lambda}, \ \D L(\lambda) \right\rangle.
\end{eqnarray*}
Note that $L^{\perp}(\lambda )$ is independent of the choice of $T$
since $(gTg^{-1})^{\lambda} = gT^{\lambda}g^{-1}$ for all $g \in G$.
Also, $T^{\lambda}$ is a subtorus of $T$ and $L^{\perp}(\lambda ) =
T^{\lambda} \cdot \D L(\lambda)$ is a connected reductive group by
\cite[Corollary 2.2.7]{Sprin}, \cite[\S IV.14.2]{Bor}.
\subsection{} \label{Tsujii} We now restrict to the special case where $V$
is a $G$-module with $* = 0$. In \cite{Slod}, \cite{PV}, \cite{Tsu} the following
generalisation of the Kirwan--Ness theorem is proved.
\begin{theorem}    {\em (Cf. Kirwan \cite{Kir}, Ness \cite{Ness})}
Let $v \in V \setminus \{ 0 \}$ and $\lambda \in Y(G) \setminus \{ 0 \}$. Assume
that $k=m(v, \lambda) \ge 1$ and write $v = \sum_{i \ge k} v_i$ with
$v_i \in V(\lambda, i)$ (and $v_k\ne 0$). Then $\lambda$ is optimal
for $v$ if and only if $v_k$ is $L^{\perp}(\lambda)$-semistable.
\end{theorem}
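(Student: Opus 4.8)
The plan is to prove this modified Kirwan--Ness theorem by reducing it to the classical statement about $T$-instability via the combinatorial machinery of Subsection~\ref{Slodowy}, exploiting the fact that optimality is norm-independent in the module case. Throughout, fix a maximal torus $T$ of $L(\lambda)$ containing $\im\lambda$ and work with the norm $\|\cdot\|$ and associated inner product $(\,,\,)$ on $Y_\Q(G)$, identifying $X_\Q(T)$ with $Y_\Q(T)$ via $\phi_T$. First I would record the basic geometric picture: writing $v=\sum_{i\ge k}v_i$ with $v_i\in V(\lambda,i)$ and $v_k\ne 0$, the hypothesis $m(v,\lambda)=k$ says that $\lambda$ achieves the value $k/\|\lambda\|$ in the ratio $m(v,-)/\|-\|$, while $S_T(v_k)\subset\{\chi\in X(T):\langle\chi,\lambda\rangle=k\}$, i.e. $\phi_T(S_T(v_k))$ lies in the affine hyperplane $\{\mu:(\mu,\lambda)=k\}$ orthogonal to $\lambda$. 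The key geometric reformulation is that $\lambda$ is optimal for $v$ if and only if $\mu:=(k/\|\lambda\|^2)\lambda$ is the point of $K_T(v)$ closest to the origin, by Lemma~\ref{opti}(ii)--(iii) together with Theorem~\ref{KRthm}(iii) and the norm-independence; and that, since $\phi_T(S_T(v))=\phi_T(S_T(v_k))\cup(\text{points }\mu'\text{ with }(\mu',\lambda)>k)$, the point $\mu$ is the closest point of $K_T(v)$ to $0$ precisely when $\mu$ lies in (the convex hull of) $\phi_T(S_T(v_k))$ and is itself the closest point there — in other words, when $0\in K_T(v_k)-\mu$ lies in the span of the hyperplane, equivalently when $v_k$ fails to be $T^{\lambda}$-unstable in the appropriate relative sense. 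This is the heart of the matter.

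Next I would make that relative statement precise and connect it to $L^\perp(\lambda)=T^\lambda\cdot\D L(\lambda)$. The group $L(\lambda)=C_G(\im\lambda)$ stabilises each $V(\lambda,i)$, so $v_k$ is an element of the $L(\lambda)$-module $V(\lambda,k)$; I want to show $\lambda$ optimal for $v$ $\iff$ $v_k$ is $L^\perp(\lambda)$-semistable. For the reduction to a maximal torus: a one parameter subgroup $\nu\in Y(L(\lambda))$ can be conjugated by $L(\lambda)$ into $Y(T)$, and $Y_\Q(T)=Y_\Q(T^\lambda)\oplus\Q\lambda$ (orthogonal decomposition, since $T^\lambda$ is generated by the cocharacters of $T$ orthogonal to $\lambda$ and $\im\lambda\subset T$). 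The decomposition of a general destabilising candidate $\mu$ for $v$ into $\mu=\mu^\perp+t\lambda$ with $\mu^\perp\in Y_\Q(T^\lambda)$, $t\in\Q$, lets me compare ratios $m(v,\mu)/\|\mu\|$ against $m(v,\lambda)/\|\lambda\|=k/\|\lambda\|$: using Lemma~\ref{opti}(i), $m(v,\mu)=\min_{\chi\in S_T(v)}\langle\chi,\mu\rangle$, and on the relevant weights $\langle\chi,\lambda\rangle\ge k$ with equality exactly on $S_T(v_k)$, a short convexity/Pythagoras computation shows that $\lambda$ fails to be optimal iff there is some $\mu^\perp\in Y(T^\lambda)$ with $\min_{\chi\in S_T(v_k)}\langle\chi,\mu^\perp\rangle\ge 1$, i.e. iff $v_k$ is $T^\lambda$-unstable. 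Conjugating, $v_k$ is $L(\lambda)$-unstable via a cocharacter of $\D L(\lambda)$-type or $T^\lambda$-type — precisely, $v_k$ is $L^\perp(\lambda)$-unstable iff $\lambda$ is not optimal for $v$; here one uses that cocharacters of the central torus of $L(\lambda)$ complementary to $T^\lambda$ are multiples of $\lambda$ modulo $\D L(\lambda)$ and act trivially (up to scalar $\xi^k$) on $V(\lambda,k)$, hence contribute nothing.

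The main obstacle, and the step deserving real care, is the second direction of this last equivalence: showing that if $v_k$ \emph{is} $L^\perp(\lambda)$-semistable then $\lambda$ is genuinely optimal for the \emph{whole} vector $v$, not merely among cocharacters respecting the $\lambda$-grading. An arbitrary $\mu\in Y(G)$ with large ratio $m(v,\mu)/\|\mu\|$ need not commute with $\lambda$; the standard remedy (following Kempf's convexity argument and the treatment in \cite{Slod}, \cite{Tsu}) is to show that any optimal $\mu$ for $v$ can be conjugated by an element of $P(\lambda)$ — in fact of $U(\lambda)$ — into $Y(L(\lambda))$, using Lemma~\ref{basiclem} ($m$ is $P(\lambda)$-invariant in the appropriate variable) together with the fact that $P(X)=P(\mu)$ is well-defined for the saturated set $X=S(v)$ and that $\lambda,\mu$ both lie in $\Delta_{S(v)}$ once we know they have the same norm and value; so they are $P(S(v))$-conjugate by Theorem~\ref{KRthm}(ii), and one checks this conjugation can be arranged inside $L(\lambda)$. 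I would first prove the ``only if'' (optimal $\Rightarrow$ $v_k$ semistable) which is the easy contrapositive — a destabilising $\nu\in Y(L^\perp(\lambda))$ for $v_k$ combines with $\lambda$ to beat $\lambda$'s ratio — and then invest the bulk of the argument in the ``if'' direction via this conjugation-into-$L(\lambda)$ reduction, after which the torus computation of the previous paragraph closes the proof. I expect the write-up to lean on \cite[Theorem 7.2]{Hess} for norm-independence and on Theorem~\ref{KRthm} and Lemma~\ref{opti} as the main external inputs.
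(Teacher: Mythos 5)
Your sketch of the forward implication (optimal $\Rightarrow$ $v_k$ semistable) is essentially the Slodowy--Tsujii argument that the paper reproduces (in the unipotent setting) in the proof of Theorem~\ref{KNuni}: reduce to a maximal torus $T$ of $L(\lambda)$ via Lemma~\ref{opti}, observe that the minimum of $(\,\cdot\,,\lambda)$ on $K_T(v)$ is achieved on the facet $K_T(v_k)$ so that $\mu_T(v)=\mu_T(v_k)$, and apply Lemma~\ref{perp} to get $\mu_{T^\lambda}(v_k)=0$. That part is fine.

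The converse direction as you have written it is circular. You propose to take an optimal $\mu$ for $v$ and conjugate it into $Y(L(\lambda))$ by appealing to Theorem~\ref{KRthm}(ii), quoting "$\lambda,\mu$ both lie in $\Delta_{S(v)}$ once we know they have the same norm and value" --- but $\lambda\in\Delta_v$ is precisely what you are trying to establish, so you cannot use it to arrange the conjugation. Note also that $P(\lambda)$-conjugation does not preserve $m(v,-)$ for an arbitrary cocharacter: Lemma~\ref{basiclem} gives $m(v,g\cdot\lambda)=m(v,\lambda)$ for $g\in P(\lambda)$, not $m(v,g\cdot\mu)=m(v,\mu)$, so there is no free pass for moving $\mu$ around.

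The correct repair (which is Tsujii's argument, and the route the paper mirrors for Theorem~\ref{KNuni}) is to conjugate $v$, not $\mu$, and to compute $\lambda_T$ directly. Since $P(\lambda)$ and $P(v)$ are parabolics they share a maximal torus $T'$; choose $w\in U(\lambda)$ with $T:=wT'w^{-1}\subset L(\lambda)$, so $\lambda\in Y(T)$ and $T\subset wP(v)w^{-1}=P(w\cdot v)$. Semistability of $v_k$ plus Lemma~\ref{orthproj} give $\mu_{T^\lambda}(v_k)=0$ and $\mu_T(v_k)=\frac{k}{(\lambda,\lambda)}\lambda$, so $\lambda=\lambda_T(v_k)$. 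The step your sketch omits is Lemma~\ref{tsu}: because $w\in U(\lambda)$, we have $w\cdot v\in v_k+\bigoplus_{i>k}V(\lambda,i)$, and Lemma~\ref{tsu} then gives $\lambda_T(w\cdot v)=\lambda_T(v_k)=\lambda$. Since $T\subset P(w\cdot v)$, Theorem~\ref{KRthm}(iii) forces $\lambda\in\Delta_{w\cdot v}$, hence $P(\lambda)=P(w\cdot v)=wP(v)w^{-1}$, hence $w\in P(v)$, hence $\Delta_v=w\Delta_vw^{-1}=\Delta_{w\cdot v}\ni\lambda$. Without Lemma~\ref{tsu} --- the invariance of $\lambda_T$ under perturbation of $v$ by higher $\lambda$-weight components --- the conjugation idea does not close, and this is exactly the ingredient your last paragraph glosses over with "one checks this conjugation can be arranged inside $L(\lambda)$".
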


 Our goal is to obtain an analogous result for the conjugation action of
 $G$ on the unipotent variety. Our proof is modelled on the proof in
 \cite{Tsu} of the above result. We will need the following lemmas from
 \cite{Slod} and \cite{Tsu} for this task.

\subsection{} \label{orthproj} We continue to assume that $V$ is a $G$-module
with $* = 0$. It follows from \cite[Proposition 8.2(c)]{Bor} that an
element of $X_{\Q}(T^{\lambda})$ may be lifted to an element of
$X_{\Q}(T)$. In fact, $X_{\Q}(T^{\lambda})$ may be naturally
identified with the orthogonal projection of  $X_{\Q}(T)$ onto the
hyperplane $\{\chi \in X_{\Q}(T) \, | \,\, (\chi, \lambda ) = 0 \}$.
The following lemma shows that this projection behaves well with
respect to optimality.
\begin{lemma} {\em (Cf. \cite{Slod})} \label{perp} Let $\lambda \in Y(G) \setminus \{ 0 \}$
and $v \in V(\lambda, k)$ for some $k \in \BN$. If $T$ is a maximal
torus of $G$ containing $\im \lambda$ then $\mu_{T^{\lambda}}(v) =
\mu_T(v)  -  \frac{k}{(\lambda, \lambda)} \lambda $. \end{lemma}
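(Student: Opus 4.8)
The plan is to verify the claimed formula $\mu_{T^\lambda}(v) = \mu_T(v) - \frac{k}{(\lambda,\lambda)}\lambda$ by combining the variational characterisation of $\mu_T$ from Lemma~\ref{opti}(ii) with the description in Subsection~\ref{orthproj} of $X_\Q(T^\lambda)$ as the orthogonal projection of $X_\Q(T)$ onto the hyperplane $\lambda^\perp := \{\chi : (\chi,\lambda)=0\}$. First I would unwind the definitions: under the identification $\phi_T\colon X_\Q(T) \to Y_\Q(T)$, write $\pi\colon Y_\Q(T) \to Y_\Q(T^\lambda)$ for orthogonal projection onto $\lambda^\perp$ (transported via $\phi_T$ and $\phi_{T^\lambda}$), so $\pi(\nu) = \nu - \frac{(\nu,\lambda)}{(\lambda,\lambda)}\lambda$. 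The key observation is that since $v \in V(\lambda,k)$, every weight $\chi$ of $v$ with respect to $T$ satisfies $\langle\chi,\lambda\rangle = k$, i.e. every point $\nu \in \phi_T(S_T(v))$ has $(\nu,\lambda) = k$; hence the Newton polytope $K_T(v)$ lies entirely in the affine hyperplane $\{(\,\cdot\,,\lambda)=k\}$, and $\pi$ restricted to this polytope is the translation $\nu \mapsto \nu - \frac{k}{(\lambda,\lambda)}\lambda$. Moreover the $T$-weights of $v$, when restricted to $T^\lambda$, are exactly the images under $\pi$ of the $T$-weights, so $\phi_{T^\lambda}(S_{T^\lambda}(v)) = \pi(\phi_T(S_T(v)))$ and therefore $K_{T^\lambda}(v) = \pi(K_T(v)) = K_T(v) - \frac{k}{(\lambda,\lambda)}\lambda$.

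Next I would pin down $\mu_{T^\lambda}(v)$ as the unique point of minimal norm in $K_{T^\lambda}(v)$, using Lemma~\ref{opti}(ii). Decompose orthogonally $Y_\Q(T) = Y_\Q(T^\lambda) \oplus \Q\lambda$; writing $\mu_T(v) = \mu' + c\lambda$ with $\mu' \in \lambda^\perp$ and $c \in \Q$, the constraint $K_T(v) \subset \{(\,\cdot\,,\lambda)=k\}$ forces $(\mu_T(v),\lambda) = k$ for any point in the polytope, in particular $c = k/(\lambda,\lambda)$. I claim $\mu' = \mu_{T^\lambda}(v)$: indeed $K_{T^\lambda}(v) = K_T(v) - c\lambda$, and for any $\eta$ in this translated polytope, $\|\eta\|^2 = \|\eta + c\lambda - c\lambda\|^2$; since $\eta + c\lambda$ ranges over $K_T(v)$ and all such points have the same $\lambda$-component $c\lambda$ (by the hyperplane constraint), minimising $\|\eta + c\lambda\|^2 = \|\eta\|^2 + c^2(\lambda,\lambda)$ over $K_T(v)$ is equivalent to minimising $\|\eta\|^2$ over $K_{T^\lambda}(v)$; the minimiser of the former is $\mu_T(v)$ and of the latter is $\mu_{T^\lambda}(v)$, whence $\mu_{T^\lambda}(v) = \mu_T(v) - c\lambda = \mu_T(v) - \frac{k}{(\lambda,\lambda)}\lambda$.

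The one point that requires a little care — and which I expect to be the main (though still routine) obstacle — is justifying that $K_{T^\lambda}(v) = \pi(K_T(v))$ rather than something larger or smaller; that is, that the set of $T^\lambda$-weights occurring in $v$ is precisely the image under restriction of the set of $T$-weights occurring in $v$, and that the convex-hull and $\phi$ operations commute with the linear projection $\pi$. This follows because $v$ is a fixed vector (so $S_{T^\lambda}(v)$ is literally $\{\chi|_{T^\lambda} : \chi \in S_T(v)\}$, as restricting weights of the single vector $v$ just restricts characters), because $\phi_{T^\lambda} = \pi \circ \phi_T$ as maps compatible with the pairings (this is the content of Subsection~\ref{orthproj}), and because a linear map sends the convex hull of a finite set to the convex hull of its image. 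Assembling these gives the identity; the argument is essentially a coordinate computation once the geometric picture — $K_T(v)$ sitting inside the affine slice $(\,\cdot\,,\lambda)=k$ and projecting isometrically-up-to-translation onto $\lambda^\perp$ — is in place.
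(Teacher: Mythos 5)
Your proof is correct. The paper does not actually prove this lemma; it is quoted directly with the citation "(Cf.\ \cite{Slod})", so there is no in-paper argument to compare against. Your argument is the standard one behind Slodowy's statement: since $v\in V(\lambda,k)$, the whole polytope $K_T(v)$ sits in the affine hyperplane $\{(\,\cdot\,,\lambda)=k\}$, on which orthogonal projection to $\lambda^\perp=Y_\Q(T^\lambda)$ is the translation by $-\tfrac{k}{(\lambda,\lambda)}\lambda$; combined with the Pythagorean decomposition $\|\nu\|^2=\|\pi(\nu)\|^2+\tfrac{k^2}{(\lambda,\lambda)}$, which holds with constant second term on $K_T(v)$, this identifies the norm-minimizers via Lemma~\ref{opti}(ii). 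The only implicit hypotheses you rely on — that the inner product on $Y_\Q(T^\lambda)$ is the restriction of that on $Y_\Q(T)$, that the identification in Subsection~\ref{orthproj} makes $\phi_{T^\lambda}$ commute with restriction of characters and with $\pi$, and that $S_{T^\lambda}(v)$ is the image of $S_T(v)$ under restriction — are all valid (the last because weight spaces for $T$ form a direct sum, so no cancellation can occur when regrouping by $T^\lambda$-weight), and you are right to flag the commutation of $\phi$, restriction, and $\pi$ as the one point that genuinely needs the content of Subsection~\ref{orthproj}. No gap.
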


\subsection{} \label{lambdamincomp} We continue to assume that $V$ is
a $G$-module with $* = 0$. The following is the key lemma used
in the proof of Theorem \ref{Tsujii}.

\begin{lemma} {\em (\cite[Lemma 2.6]{Tsu})}\label{tsu} Let $T$ be a maximal
torus of $G$ and assume that $v \in V \setminus \{ 0 \}$ is $T$-unstable.
Let $k = m(v, \lambda_T(v))$ and $v' \in v + \bigoplus_{i>k}
V(\lambda_T(v), i)$. Then $\lambda_T(v) = \lambda_T(v')$.
\end{lemma}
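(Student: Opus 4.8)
The statement to prove is Lemma~\ref{tsu}: if $T$ is a maximal torus, $v\in V\setminus\{0\}$ is $T$-unstable, $k=m(v,\lambda_T(v))$, and $v'\in v+\bigoplus_{i>k}V(\lambda_T(v),i)$, then $\lambda_T(v')=\lambda_T(v)$. Write $\lambda=\lambda_T(v)$ for brevity. By Lemma~\ref{opti}(\ref{uniqueintorus}) it suffices to show that $v$ and $v'$ have the same minimal-norm point $\mu_T(\cdot)$ in their respective weight polytopes, i.e. $\mu_T(v')=\mu_T(v)$. The first step is to understand $S_T(v')$ in terms of $S_T(v)$: by hypothesis $v'$ differs from $v$ only in weight spaces $V_\chi$ with $\langle\chi,\lambda\rangle>k$, so every weight occurring in $v'$ but not in $v$ satisfies $(\phi_T(\chi),\lambda)>k$, whereas $m(v,\lambda)=k$ means $(\phi_T(\chi),\lambda)\ge k$ for all $\chi\in S_T(v)$ with equality attained. (Note also that the minimal-norm point of $K_T(v)$ satisfies $(\mu_T(v),\lambda)=\|\mu_T(v)\|^2=k$ by Lemma~\ref{opti}(\ref{normsquared}), since $\lambda$ is the primitive multiple of $\mu_T(v)$.)

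\textbf{Main argument.} I would argue that $\mu_T(v)$ is also the minimal-norm point of $K_T(v')$. Clearly $K_T(v)\subset K_T(v')$ since $S_T(v)\subset S_T(v')$, so $\|\mu_T(v')\|\le\|\mu_T(v)\|$. For the reverse, the key observation is that the extra points $\phi_T(\chi)$ with $\chi\in S_T(v')\setminus S_T(v)$ all lie in the open half-space $\{y:(y,\lambda)>k\}=\{y:(y,\mu_T(v))>\|\mu_T(v)\|^2\}$, which is precisely the open half-space \emph{not} containing the origin that is cut off by the supporting hyperplane of $K_T(v)$ at $\mu_T(v)$ (orthogonal to $\mu_T(v)$). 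A point of $K_T(v')$ can be written as a convex combination of a point of $K_T(v)$ and a point of the convex hull of these extra $\phi_T(\chi)$'s; since both the ``old'' part has $(\cdot,\mu_T(v))\ge\|\mu_T(v)\|^2$ (as $\mu_T(v)$ is the nearest point of the convex set $K_T(v)$ to the origin, every point $y\in K_T(v)$ satisfies $(y,\mu_T(v))\ge\|\mu_T(v)\|^2$) and the ``new'' part has $(\cdot,\mu_T(v))>\|\mu_T(v)\|^2$, every point $y\in K_T(v')$ satisfies $(y,\mu_T(v))\ge\|\mu_T(v)\|^2$. But this inequality, combined with Cauchy--Schwarz, forces $\|y\|\ge\|\mu_T(v)\|$ with equality only at $y=\mu_T(v)$; hence $\mu_T(v)\in K_T(v')$ is its unique minimal-norm point, i.e. $\mu_T(v')=\mu_T(v)$. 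Passing to primitive multiples via Lemma~\ref{opti}(\ref{uniqueintorus}) gives $\lambda_T(v')=\lambda_T(v)$.

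\textbf{Where the difficulty lies.} The argument is essentially the variational/convexity characterisation of the nearest point to the origin in a convex polytope, so there is no deep obstacle; the one point requiring care is the claim that $(y,\mu_T(v))\ge\|\mu_T(v)\|^2$ for all $y\in K_T(v)$ — this is the standard obtuse-angle (acute-angle) criterion for the metric projection onto a convex set and should be stated cleanly, perhaps as a preliminary observation, since it is used twice (once for the old part of $K_T(v')$ and implicitly in identifying the supporting hyperplane). A secondary subtlety is making sure $v'$ is genuinely $T$-unstable so that $\lambda_T(v')$ is defined in the first place: this is automatic because $\mu_T(v')=\mu_T(v)\ne0$ (as $v$ is $T$-unstable), so Lemma~\ref{opti}(\ref{normsquared}) applies. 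I would also double-check the edge case $v_k$ versus $v$ itself: the hypothesis is $v'-v\in\bigoplus_{i>k}V(\lambda,i)$, which in particular keeps the degree-$k$ component of $v'$ equal to that of $v$, so the minimal value $k=m(v',\lambda)$ is still attained and the half-space separation is strict exactly where it needs to be.
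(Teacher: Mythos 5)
Your proof has one genuine error, though it is repairable without changing the overall strategy. The claim that $S_T(v)\subset S_T(v')$, hence $K_T(v)\subset K_T(v')$, is false in general. The hypothesis says only that $v'-v\in\bigoplus_{i>k}V(\lambda,i)$, so $v'$ is free to \emph{cancel} weight components of $v$ in degrees strictly above $k$: for instance with $w=-\sum_{i>k}v_{(i)}$ one gets $v'=v_{(k)}$, whose support is in general a proper subset of $S_T(v)$. This error is not harmless in your argument. You use the (false) inclusion $K_T(v)\subset K_T(v')$ in two places: to get $\|\mu_T(v')\|\le\|\mu_T(v)\|$, and — implicitly — to justify the assertion that $\mu_T(v)\in K_T(v')$. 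The first inequality is actually dispensable, but the membership $\mu_T(v)\in K_T(v')$ is essential; without it, your convexity estimate $(y,\mu_T(v))\ge\|\mu_T(v)\|^2$ for all $y\in K_T(v')$ gives only the one-sided bound $\|\mu_T(v')\|\ge\|\mu_T(v)\|$ and does not pin down $\mu_T(v')$.

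The fix is short and stays inside your framework. Since $\mu_T(v)=c\lambda$ with $c=k/(\lambda,\lambda)>0$ and $(\mu_T(v),\lambda)=k$, whereas every $\chi\in S_T(v)$ satisfies $(\phi_T(\chi),\lambda)\ge k$, any convex combination of the $\phi_T(\chi)$ ($\chi\in S_T(v)$) equal to $\mu_T(v)$ can put positive weight only on those $\chi$ with $\langle\chi,\lambda\rangle=k$ exactly. But the degree-$k$ components of $v$ and $v'$ coincide, so these weights lie in $S_T(v')$ as well, giving $\mu_T(v)\in K_T(v')$. Once that is in hand your obtuse-angle/Cauchy--Schwarz argument is correct: every vertex $\phi_T(\chi)$ of $K_T(v')$ has $\langle\chi,\lambda\rangle\ge k$ (degrees $\le k$ of $v'$ agree with $v$, degrees $>k$ are automatic), hence $(y,\mu_T(v))\ge\|\mu_T(v)\|^2$ for all $y\in K_T(v')$, and $\mu_T(v)$ is the unique minimal-norm point of $K_T(v')$. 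Passing to the primitive multiple via Lemma~\ref{opti}(\ref{uniqueintorus}) finishes the proof. (Note also that the paper does not reprove this lemma; it is cited directly from Tsujii, and the corrected version of your argument is essentially the expected one.)
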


\subsection{}\label{groupcase} We now assume that $V = G_{\uni}$ with $* = 1_G$. Let
$\lambda \in Y(G)$ and let $T$ be a maximal torus of $L(\lambda)$
with corresponding $G$-root system $\Sigma$. Recall that for each root $\alpha \in
\Sigma $ we denote the corresponding root subgroups by $U_{\alpha}$,
and we have that \begin{equation*} R_u(P(\lambda)) =\,U(\lambda) :=
\,\left\langle U_{\alpha} \, |\,\, \alpha \in \Sigma,\ \langle
\alpha, \lambda \rangle \ge 1 \right\rangle,
\end{equation*}
where $R_u(P(\lambda))$ denotes the unipotent radical of
$P(\lambda)$. In fact, $U(\lambda )$ is directly spanned by the root
subgroups $U_{\alpha}$ with $\langle \alpha, \lambda \rangle \ge 1$;
see \cite[\S IV.14]{Bor}. Hence the product morphism
\begin{equation*} \pi : U_{\alpha_1} \times U_{\alpha_2} \times
\cdots \times U_{\alpha_n} \longrightarrow\,
\textstyle{\prod}_{\langle \alpha, \lambda \rangle \ge
1}\,U_{\alpha}\, = \,U(\lambda)
\end{equation*} is an isomorphism of varieties, with respect to any
choice of ordering $\{ \alpha_1, \dots, \alpha_n \}  =\,  \{ \alpha
\in \Sigma \, |\, \, \langle \alpha, \lambda \rangle \ge 1 \}$,
which we now fix once and for all. Moreover, since each root
subgroup $U_{\alpha}\,=\,\langle x_\alpha(t)\,|\,\,t\in\k\rangle$
is isomorphic to the additive group $\k^{+}$, this gives an
isomorphism $f : U(\lambda) \stackrel{\sim}{\longrightarrow}\,
\BA^n(\k)$. Consider $\BA^n(\k)$ as a vector space
with basis indexed by the set $\{1,2,\ldots, n\}$. It becomes a
$T$-module by letting $t \in T$ act on the $i^{th}$ basis vector by
scalar multiplication by $\alpha_i(t)$. With respect to this $f$ is
$T$-equivariant. From now on we will implicitly regard $U(\lambda)$
as a $T$-module.

We define the following $L(\lambda)$-stable closed subvarieties of
$U(\lambda)$ for each $i \ge 1$: Let $ \{ \beta_{1}, \beta_{2},
\dots, \beta_{l(i)} \}\,  = \, \{ \alpha \in \Sigma \, | \,\,
\langle \alpha, \lambda \rangle = i \},$ and set
\begin{equation*} U^i(\lambda)  \,=\, \pi(U_{\beta_{1}} \times
U_{\beta_{2}} \times \cdots \times U_{\beta_{l(i)}}).
\end{equation*}
These give a direct product decomposition of $U(\lambda)$ into
$T$-submodules, and we may identify
\begin{equation*} U(\lambda)\,   \cong\,  U^1(\lambda) \times
U^2(\lambda) \times \cdots \times U^r(\lambda), \end{equation*} for
some $r\in\BN$, so that for any $u \in U(\lambda)$ we may uniquely
write $\pi^{-1}(u) = (u_1,u_2,\, \ldots, \,u_r )$ with $u_i \in
U^i(\lambda)$. For $\lambda \not= 0$ and $u \not= 1_G$ define
$m'(u,\lambda) :=\, \min \{ i \, | \,\, u_i \not= 1_G\}$ and
 $m'(u,\lambda) :=\, +\infty$ for $u = 1_G$. Then we have
 the following.
\begin{lemma}\label{m(u)}
Let $\lambda \in Y(G) \setminus \{ 0 \}$ and $u \in U(\lambda )$.
Then $m'(u,\lambda) = m(u,\lambda)$. \end{lemma}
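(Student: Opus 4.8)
The plan is to transport everything to the affine space $\BA^n(\k)$ via the $T$-equivariant variety isomorphism $f\colon U(\lambda)\stackrel{\sim}{\longrightarrow}\BA^n(\k)$ fixed above, and to read off both $m(u,\lambda)$ and $m'(u,\lambda)$ from the coordinates of $f(u)$. The case $u=1_G$ is immediate, since then both sides equal $+\infty$ by definition, so assume $u\ne 1_G$.

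First I would record the effect of conjugation by $\lambda(\xi)$. Since $\lambda(\xi)x_{\alpha}(t)\lambda(\xi)^{-1}=x_{\alpha}(\xi^{\langle\alpha,\lambda\rangle}t)$ for every root $\alpha$, and $f$ intertwines the conjugation action of $T$ on $U(\lambda)$ with the $T$-module structure on $\BA^n(\k)$ in which $t$ scales the $j$-th basis vector by $\alpha_j(t)$, one gets $f(\lambda(\xi)\,u\,\lambda(\xi)^{-1})=\lambda(\xi)\cdot f(u)$, where on the right $\lambda(\xi)$ multiplies the $j$-th coordinate by $\xi^{\langle\alpha_j,\lambda\rangle}$. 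As each $\langle\alpha_j,\lambda\rangle\ge 1$, the morphism $\psi\colon\k^{\times}\to U(\lambda)$, $\psi(\xi)=\lambda(\xi)\cdot u$, extends to $\tilde\psi\colon\k\to U(\lambda)$ with $\tilde\psi(0)=1_G=*$, and in coordinates $f(\tilde\psi(\xi))_j=\xi^{\langle\alpha_j,\lambda\rangle}f(u)_j$.

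Next I would reinterpret $m'(u,\lambda)$ in the same coordinates. For $i\ge 1$ put $U_{\ge i}(\lambda)=\langle U_{\alpha}\mid\langle\alpha,\lambda\rangle\ge i\rangle$. By the directly-spanned property of unipotent groups (\cite[\S IV.14]{Bor}) this subgroup equals $U^{i}(\lambda)U^{i+1}(\lambda)\cdots U^{r}(\lambda)$, it is normalised by $T$, and its image under $f$ is the coordinate subspace spanned by those $e_j$ with $\langle\alpha_j,\lambda\rangle\ge i$. Writing $\pi^{-1}(u)=(u_1,\dots,u_r)$, one has $u_1=\cdots=u_{i-1}=1_G$ if and only if $u\in U_{\ge i}(\lambda)$, and therefore $m'(u,\lambda)=\min\{\langle\alpha_j,\lambda\rangle\mid f(u)_j\ne 0\}$.

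Finally I would compute $m(u,\lambda)$ straight from the definition of the schematic fibre. Put $k=m'(u,\lambda)$ and $c_j=f(u)_j$. The comorphism of $\tilde\psi$ sends the $j$-th coordinate function to $c_jT^{\langle\alpha_j,\lambda\rangle}\in\k[T]$, so the ideal $\tilde\psi^{*}(I_{*})\k[T]$ is generated by $\{T^{\langle\alpha_j,\lambda\rangle}\mid c_j\ne 0\}$ and hence equals $(T^{k})$. Thus the fibre of $\tilde\psi$ over $*$ has coordinate ring $\k[T]/(T^{k})$, which by definition means $m(u,\lambda)=k=m'(u,\lambda)$. The only genuinely delicate point is the third paragraph --- matching the group-theoretic definition of $m'$, via the decomposition $U(\lambda)=U^{1}(\lambda)\cdots U^{r}(\lambda)$, with the ``lowest $\lambda$-weight'' condition on $f(u)$ --- and this rests entirely on the fact that each filtration subgroup $U_{\ge i}(\lambda)$ is directly spanned by the root subgroups it contains; the rest is a routine unwinding of definitions.
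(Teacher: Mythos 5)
Your argument follows essentially the same path as the paper's: move to coordinates via the $T$-equivariant isomorphism $f\colon U(\lambda)\stackrel{\sim}{\rightarrow}\BA^n(\k)$, observe that the $j$-th coordinate of $\lambda(\xi)\cdot u$ is $\xi^{m_j}f(u)_j$ where $m_j=\langle\alpha_j,\lambda\rangle$, and read off the schematic fibre as $\k[T]/(T^k)$ with $k=\min\{m_j\mid f(u)_j\neq 0\}$. You are in fact slightly more careful than the paper on the identity $m'(u,\lambda)=\min\{m_j\mid f(u)_j\neq 0\}$, which you justify via the directly-spanned filtration subgroups $U_{\ge i}(\lambda)$, whereas the paper leaves this implicit.

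There is, however, a gap in your final step. You compute the schematic fibre of $\tilde\psi\colon\BA^1(\k)\rightarrow U(\lambda)$ over $1_G$ and conclude ``by definition $m(u,\lambda)=k$.'' But by the definition in the paper, $m(u,\lambda)$ is the multiplicity of the schematic fibre over $*$ of the extended morphism into the \emph{ambient} pointed variety $V=G_{\uni}$, not into $U(\lambda)$; a priori the two computations could differ. The paper closes this by observing that $\iota\colon U(\lambda)\hookrightarrow G_{\uni}$ is a closed embedding, so the (surjective) comorphism $\iota^*\colon\k[G_{\uni}]\rightarrow\k[U(\lambda)]$ carries the maximal ideal $I'$ of $1_G$ in $\k[G_{\uni}]$ onto the maximal ideal $I$ of $1_G$ in $\k[U(\lambda)]$, whence $(\iota\circ\tilde\psi)^*(I')\k[T]=\tilde\psi^*(I)\k[T]$ and the two fibre rings coincide. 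The point is routine, but it is part of the definition of $m$ and needs to be stated.
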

\begin{proof}  If $u=1_G$, the statement is obvious, so suppose $u\ne 1_G$.
For each root $\alpha_i$ let $m_i = \langle \alpha_i, \lambda
\rangle$. Then we have a morphism of varieties $\ell :
\BA^1(\k) \rightarrow U(\lambda)$ given by
$t\longmapsto\,\lambda(t) u \lambda(t)^{-1}$ for $t \in
\k^{\times}$ and $\ell(0) = 1_G$. Writing $u =
\pi^{-1}(u_{\alpha_1},u_{\alpha_2},\, \ldots, \,u_{\alpha_n} )$ with
$u_{\alpha_i}=x_{\alpha_i}(\xi_i) \in U_{\alpha_i}$, we have
\begin{eqnarray*} \ell(t) & = & \pi^{-1}\big(\lambda(t) u_{\alpha_1}
\lambda(t)^{-1},\, \lambda(t) u_{\alpha_2} \lambda(t)^{-1},\,\ldots\, ,\,
\lambda(t) u_{\alpha_n} \lambda(t)^{-1} \big) \\
        & = & \pi^{-1}\big( x_{\alpha_1}(\xi_1t^{\langle \alpha_1,\,  \lambda
        \rangle}),\, x_{\alpha_2}(\xi_2 t^{\langle \alpha_2,\, \lambda
        \rangle})
        ,\, \ldots\,,\, x_{\alpha_n}(\xi_n t^{\langle \alpha_n,\,
        \lambda \rangle})\big) \\
        & = & \pi^{-1}\big( x_{\alpha_1}(\xi_1t^{m_1}),\,
        x_{\alpha_2}(\xi_2t^{m_2}),\,  \ldots\,,\,
        x_{\alpha_n}(\xi_nt^{m_n})\big).
        \end{eqnarray*}
Without loss of generality assume that $m_1 \le m_2 \le \cdots \le
m_n$ and $m'(u, \lambda) = m_k$ for some $k\le n$, so that $\xi_i=0$
for $i<k$. Then, identifying $\k[U(\lambda )]$ and
$\k[\BA^1(\k)]$ with the polynomial rings
$\k[T_1, \dots, T_n]$ and $\k[T]$ respectively, the
comorphism $\ell^*$ sends $g = g(T_1, \dots, T_n ) \in
\k[U(\lambda )]$ to $g(0,\ldots, 0,\xi_kT^{m_k}, \ldots,\,
\xi_nT^{m_n})$. Hence, if $I = \langle T_1, \dots, T_n \rangle$ is
the maximal ideal of $1_G \in U(\lambda)$, then the ideal
$\ell^*(I)$ of the schematic fibre $\ell^{-1}(u)$ is generated by
$\xi_kT^{m_k}, \ldots,\,  \xi_nT^{m_n}$. As $\xi_k\ne 0$, it follows
that the coordinate ring of the schematic fibre $\ell^{-1}(u)$
equals $\k[T] / (T^{m_k})$.

Now consider the composition $\BA^1(\k)
\stackrel{\ell}{\longrightarrow}\, U(\lambda)
\stackrel{\iota}{\longrightarrow}\, G_{\uni}.$ If $\iota(1_G)= 1_G$
has maximal ideal $I'$ of $\k[G_{\uni} ]$, then $\iota^*(I')
= I$, so that $(\iota \circ\, \ell )^*(I') =\, \ell^* \circ\,
\iota^*(I') =\, \ell^*(I)$, which completes the proof.
\end{proof}
\subsection{}\label{KNess}
For $i\ge 1$, we set $U_i(\lambda) := \,\left\langle U_{\alpha} \,
|\,\, \alpha \in \Sigma,\ \langle \alpha, \lambda \rangle \ge i
\right\rangle,$ a connected normal subgroup of $U(\lambda)$. The
group $L(\lambda)$ acts rationally on the affine variety
$V_i(\lambda)\,:=\, U_i(\lambda)/U_{i+1}(\lambda)\,\cong\,
U^i(\lambda)$. The variety $V_i(\lambda)$ is a connected abelian
unipotent group. It may be regarded as a vector space over $\k$ with
basis $v_1,\ldots, v_{l(i)}$ consisting of the images of
$x_{\beta_1}(1),\ldots, x_{\beta_{l(i)}}(1)$ in
$U_i(\lambda)/U_{i+1}(\lambda)$. Our convention here is that
$\xi_1v_1+\cdots+\xi_{l(i)}v_{l(i)}$ is the image of
$\prod_{j=1}^{l(i)}\,x_{\beta_j}(\xi_j)$ in
$U_i(\lambda)/U_{i+1}(\lambda)$ for all $\xi_i\in\k$. The preceding
remarks then imply that the torus $T\subset L(\lambda)$ acts
linearly on $V_i(\lambda)\,\cong\, U^i(\lambda)$ with the $v_j$
being weight vectors of $V_i(\lambda)$ with respect to $T$. In view
of Chevalley's commutator relations it is straightforward to see
that each root subgroup $U_\alpha$ with $\langle \alpha, \lambda
\rangle =0$ acts linearly on $V_i(\lambda)$ as well. It follows that
the group $L(\lambda)$ acts linearly and rationally on
$V_i(\lambda)$. In other words, each vector space $V_i(\lambda)$ is
a rational $L(\lambda)$-module.

We are now ready to state and prove the following version of the
Kirwan--Ness theorem.

\begin{theorem} \label{KNuni}
Let $u \not= 1_G$ be a unipotent element of $G$ and $\lambda \in
Y(G) \setminus \{ 0 \}$. Assume that $u \in U(\lambda)$ and let $k= m(u,
\lambda)$. Then $\lambda $ is optimal for $u$ if and only if the
image of $u$ in $V_k(\lambda)=U_k(\lambda)/U_{k+1}(\lambda)$ is
$L^{\perp}(\lambda)$-semistable.
\end{theorem}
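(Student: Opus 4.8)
The plan is to mimic Tsujii's proof of Theorem~\ref{Tsujii} (the linear Kirwan--Ness theorem) but working inside the group $U(\lambda)$ rather than in a $G$-module, using Lemma~\ref{m(u)} to translate between the filtration $m'(u,\lambda)$ on $U(\lambda)$ and the numerical function $m(u,\lambda)$. The key point is that Lemma~\ref{m(u)} identifies $m(u,\lambda)$ with $\min\{i: u_i\ne 1_G\}$ under the $T$-equivariant product decomposition $U(\lambda)\cong U^1(\lambda)\times\cdots\times U^r(\lambda)$, so that ``lowest term'' arguments familiar from the module case carry over verbatim. Write $u=\pi^{-1}(u_1,\dots,u_r)$ with $u_k\ne 1_G$ the lowest nontrivial component, and let $\bar u_k\in V_k(\lambda)=U_k(\lambda)/U_{k+1}(\lambda)$ denote its image; note $V_k(\lambda)\cong U^k(\lambda)$ is a rational $L(\lambda)$-module by Subsection~\ref{KNess}.

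First I would prove the ``only if'' direction by contraposition: suppose $\bar u_k$ is $L^\perp(\lambda)$-unstable. Then there is some $\mu\in Y(L^\perp(\lambda))\setminus\{0\}$ destabilising $\bar u_k$ in the $L(\lambda)$-module $V_k(\lambda)$. Since $\mu$ commutes with $\lambda$ (as $\mu\in Y(L^\perp(\lambda))$ and $L^\perp(\lambda)\subset L(\lambda)=C_G(\im\lambda)$), we can form the combined cocharacter $\lambda'=a\lambda+b\mu$ for suitable positive rationals $a,b$, living in $Y_\Q(T)$ for a maximal torus $T$ of $L(\lambda)$ containing $\im\mu$. The $T$-weights of $u$ on $U(\lambda)$ all pair $\ge k$ with $\lambda$, with the weights of $\bar u_k$ pairing exactly $k$; pairing a nonzero-weight component with $\mu$ only increases $m(u,\lambda')/\|\lambda'\|$ by the usual convexity/polytope estimate of Lemma~\ref{opti} and Lemma~\ref{perp}. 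Concretely one shows $m(u,\lambda')/\|\lambda'\|>m(u,\lambda)/\|\lambda\|$ by the same Pythagorean computation as in the module case (using $\mu_{T^\lambda}(\bar u_k)=\mu_T(\bar u_k)-\tfrac{k}{(\lambda,\lambda)}\lambda$), contradicting optimality of $\lambda$.

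For the ``if'' direction, assume $\bar u_k$ is $L^\perp(\lambda)$-semistable; I want to show $\lambda$ is optimal for $u$. Let $\mu=\lambda_T(u)$ be the primitive optimal cocharacter from Lemma~\ref{opti}(\ref{uniqueintorus}) for a maximal torus $T$ chosen to contain both $\im\lambda$ and $\im\mu$ (legitimate after conjugating, using Theorem~\ref{KRthm}(iv) and the fact that $\mu$ and $\lambda$ both lie in the parabolic $P(u)$, hence in a common maximal torus of $P(u)$). Decompose $\mu=c\lambda+\mu^\perp$ with $\mu^\perp\perp\lambda$; then $\mu^\perp\in Y_\Q(T^\lambda)\subset Y_\Q(L^\perp(\lambda))$ by Subsection~\ref{orthproj}. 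By Lemma~\ref{tsu} applied in $U(\lambda)$ (here is where Lemma~\ref{m(u)} is essential: it lets me replace $u$ by its lowest component $\bar u_k$ without changing the optimal cocharacter inside $T$), optimality of $\mu$ for $u$ forces $\mu^\perp$ to be optimal for $\bar u_k$ in the $L(\lambda)$-module $V_k(\lambda)$; but $L^\perp(\lambda)$-semistability of $\bar u_k$ means no nonzero element of $Y(L^\perp(\lambda))$ destabilises it, forcing $\mu^\perp=0$, i.e.\ $\mu$ is a positive multiple of $\lambda$, so $\lambda$ is optimal.

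The main obstacle I expect is the reduction to the lowest component in the group setting: in a $G$-module the decomposition $v=\sum v_i$ is linear and the action of $\lambda$ is diagonal, whereas here $U(\lambda)$ is only \emph{isomorphic as a variety} (and $T$-module) to $\prod U^i(\lambda)$, and multiplication mixes components via Chevalley commutator relations. The resolution is precisely Lemma~\ref{m(u)} together with the observation in Subsection~\ref{KNess} that $L(\lambda)$ acts \emph{linearly} on each $V_i(\lambda)=U_i(\lambda)/U_{i+1}(\lambda)$; passing to this associated graded module converts the group computation into the module computation of \cite{Tsu}, so that Lemma~\ref{tsu} and the Kirwan--Ness inequality apply directly to $\bar u_k\in V_k(\lambda)$. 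Once that translation is in place the remaining estimates are the same norm/convexity arguments as in Theorem~\ref{Tsujii}.
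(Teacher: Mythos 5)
Your overall strategy — translating the group-theoretic instability computation into a module computation via Lemma~\ref{m(u)} and the associated graded $T$-module structure on $U(\lambda)$, then invoking the linear Kirwan--Ness/Tsujii machinery — is indeed the right one and matches the paper's approach. Your ``only if'' direction by contraposition is logically fine and, modulo the glossed-over Pythagorean estimate, would carry through (the paper argues it directly, showing $\lambda_T(\bar u)\in\Delta_{T,u}$ and then $\mu_{T^\lambda}(\bar u)=0$, but these are two sides of the same coin).

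There is, however, a genuine gap in your ``if'' direction. You write that one may choose a maximal torus $T$ containing both $\im\lambda$ and an optimal cocharacter $\mu=\lambda_T(u)$, justified by the claim that ``$\mu$ and $\lambda$ both lie in the parabolic $P(u)$, hence in a common maximal torus of $P(u)$.'' The first part is circular ($\mu=\lambda_T(u)$ is defined in terms of the torus $T$ you are trying to choose), and more seriously, the assertion $\im\lambda\subset P(u)$ is not available at this point: $\lambda$ being optimal for $u$ — which would give $P(u)=P(\lambda)\supset L(\lambda)\supset\im\lambda$ — is precisely what you are trying to prove. Without it, there is no reason for $\lambda$ to commute with any optimal cocharacter for $u$, so the decomposition $\mu=c\lambda+\mu^\perp$ in $Y_\Q(T)$ is not yet legitimate.

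The paper's resolution of this is the technical heart of the proof and cannot be skipped: take a common maximal torus $T'$ of $P(\lambda)$ and $P(u)$ (these exist by \cite[Corollary 28.3]{Hum}), then conjugate by some $w\in U(\lambda)$ to obtain $T=wT'w^{-1}\subset L(\lambda)$, so that $\lambda\in Y(T)$ but now $T$ is a maximal torus of $P(wuw^{-1})$ rather than of $P(u)$. One must then establish $\lambda=\lambda_T(wuw^{-1})$. Since $\bar u$ is $T^\lambda$-semistable by hypothesis, Lemma~\ref{perp} gives $\mu_T(\bar u)=\tfrac{k}{(\lambda,\lambda)}\lambda$, so $\lambda=\lambda_T(\bar u)$; and the new ingredient is a Chevalley commutator computation showing that the $k$-component of $wuw^{-1}$ in the associated graded $T$-module is still $u_k$, whence Lemma~\ref{tsu} yields $\lambda_T(wuw^{-1})=\lambda_T(u_k)=\lambda$. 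Only then does one conclude $\lambda\in\Delta_{wuw^{-1}}=\Delta_u$ via Theorem~\ref{KRthm}. Your sketch mentions using Lemma~\ref{tsu} to ``replace $u$ by its lowest component'' but omits the conjugation by $w$ and the commutator argument verifying that this conjugation does not disturb the lowest component — which is exactly the step that makes the translation from $u$ to $\bar u_k$ legitimate after changing the torus.
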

\begin{proof} In proving the theorem we may assume without loss of generality that
$\lambda$ is primitive. We follow Tsujii's arguments from
\cite[Theorem 2.8]{Tsu} very closely.

First suppose $\lambda$ is optimal for $u$ and let $k=m(u,\lambda)$.
Then $u\in U_k(\lambda)\setminus U_{k+1}(\lambda)$ by
Lemma~\ref{m(u)}. Let $\bar{u}$ denote the image of $u$ in the
$L^\perp(\lambda)$-module
$V_k(\lambda)\,=\,U_k(\lambda)/U_{k+1}(\lambda)$. We must show that
$\bar{u}$ is semistable with respect to all maximal tori of
$L^{\perp}(\lambda)$. Of course, each of these has the form
$T^{\lambda}$ for some maximal torus $T$ of $L(\lambda)$. In
particular, $\lambda\in Y(T)$ and hence $\lambda=\lambda_T(u)$ by
our assumption on $\lambda$. Note that Lemma~\ref{opti} can be used
in our present (non-linear) situation in view of
Proposition~\ref{linear} applied with $G\,=\,T$. Then $k =
(\mu_T(u), \lambda_T(u))$, so that
$$\mu_T(u)\in\{\mu\in K_T(u)\,|\,\, (\mu, \lambda_T(u)) = k\} \,=\,
K_T(\bar{u}).$$ Therefore $\mu_T(u) = \mu_T(\bar{u})$ and
$\lambda_T(u) = \lambda_T(\bar{u})$. Let $\mu\in Y(T ) \setminus \{ 0 \}$.
Then Lemma~\ref{opti} implies that
$$\frac{m(u,\lambda_T(u))} {\|\lambda_T(u)\|}\, =\,
\frac{k}{\|\lambda_T(u)\|}\,
=\,\frac{m(\bar{u},\lambda_T(u))}{\|\lambda_T(u)\|}\,=
\,\frac{m(\bar{u},\lambda_T(\bar{u}))}{\|\lambda_T(\bar{u})\|}\,\ge\,
\frac{m(\bar{u},\mu)}{\|\mu\|}.$$ Since $S_T(\bar{u})\subseteq
S_T(u)$ we have that $m(\bar{u},\mu)\ge m(u,\mu)$. Then
$\lambda_T(\bar{u})\in\Delta_{T,u}\,=\,\{\lambda_T(u)\}$, implying
that $\mu_{T^\lambda}(\bar{u})$ and $\lambda$ are proportional; see
Lemma~\ref{perp}. Since $\lambda$ is orthogonal to
$\mu_{T^\lambda}(\bar{u})\in Y(T^\lambda)$ it must be that
$\|\mu_{T^\lambda}(\bar{u})\|=0$. Hence $\bar{u}$ is
$T^\lambda$-semistable by Lemma~\ref{opti}(iii).

Conversely, suppose that $\bar{u}$ is $L^\perp(\lambda)$-semistable.
The parabolic subgroups $P(\lambda)$ and $P(u)$ have a maximal torus
in common, $T'$ say; see \cite[Corollary 28.3]{Hum}. We may choose
$w \in U(\lambda)$ with $T:=wT'w^{-1} \subset L(\lambda)$ so that
$\lambda \in Y(T)$. Then $\bar{u}$ is $T^{\lambda}$-semistable by
the assumption and hence $\mu_{T^{\lambda}}(\bar{u}) = 0$ by Lemma
\ref{Slodowy}. Applying Lemma \ref{orthproj} we now get
$\mu_T(\bar{u}) = \frac{k}{(\lambda,\lambda)}\lambda$. It follows
that $\lambda = \lambda_{T}(\bar{u})$. We claim that also $\lambda =
\lambda_{T}(wuw^{-1})$.

In order to prove the claim we first recall that $U(\lambda)$ has a
$T$-module structure such that
$U_i(\lambda)/U_{i+1}(\lambda)\,\cong\, U^i(\lambda)$ as $T$-modules
for all $i\ge 1$; see Subsection~\ref{groupcase}. Then
$\lambda_{T}(\bar{u})=\lambda_{T}(u_k)$. In view of
Lemma~\ref{lambdamincomp}, we need to show that the $k$-component of
$wuw^{-1}$ is $u_k$ (which will then be the minimal non-trivial
component of $wuw^{-1}$, by Lemma~\ref{lambdasbgrps}). Write $u =
\prod_{\langle \alpha, \lambda \rangle \ge k } u_{\alpha}$ and
assume that $w=\prod_{i=1}^n x_{\alpha_i}(\zeta_i)$ for some
$\zeta_i\in\k$.  Then Chevalley's commutator relations yield
\begin{eqnarray*} wuw^{-1}  &=&  \mathop{\prod_{\alpha \in
\Sigma}}_{\langle \lambda, \alpha \rangle \ge k} \!\!\!
wu_{\alpha}w^{-1}  \in   \mathop{\prod_{\alpha \in \Sigma}}_{\langle
\lambda, \alpha \rangle \ge k} \!\!\! \Big(u_{\alpha} \!\!\!
\mathop{\prod_{i, j > 0}}_{i\alpha + j\beta \in \Sigma} \!\!\!
U_{i\alpha + j\beta}\Big)\\ & \subseteq & \Big(\mathop{\prod_{\alpha
\in \Sigma}}_{\langle \lambda, \alpha \rangle \ge k} \!\!\!
u_{\alpha} \Big)\cdot U_{k+1}(\lambda) \ \,  \subseteq\,\  u_k U_{k+1}(\lambda).
\end{eqnarray*}
Hence $\lambda = \lambda_{T}(wuw^{-1})$ as claimed. To complete the
proof of the theorem note that $T \subset wP(\lambda)w^{-1} =
P(wuw^{-1})$, and so $\lambda \in \Delta_{wuw^{-1}} = \Delta_u$ by
Theorem \ref{KRthm}.  \end{proof}
\begin{rmk}\label{overZ}
For each $\beta\in\Sigma$ with $\langle\beta,\lambda\rangle=k$ we
let $v_\beta$ denote the image of $x_\alpha(1)$ in
$V_k(\lambda)\,=\,U_k(\lambda)/U_{k+1}(\lambda)$ and write $X_\beta$
for the tangent vector of the root subgroup $U_\beta=\,\langle
x_\beta(t)\,|\,\,t\in\k\rangle$ in $\gl\,=\,\Lie G$, so that
$$(\Ad x_\beta(t))\,y\,\equiv\,y+t[X_\beta,y]\quad\ \,\big({\rm
mod}\,\,\gl\otimes t^2\k[t]\big)\qquad\quad\,
(\forall\,\,y\in\,\gl\otimes\k[t]).$$ The map
$v_\beta\mapsto\,X_\beta$ extends uniquely up to a linear
isomorphism between $V_k(\lambda)$ and the subspace
$\gl(\lambda,k)=\,{\rm
span}\,\{X_\beta\,|\,\,\langle\beta,\lambda\rangle=k\}$; we call it
$\eta_k$. Using Chevalley's commutator relations and our definition
of the vector space structure on $V_k(\lambda)$ at the beginning of
this subsection it is straightforward to see that $\eta_k$ is an
isomorphism of $L(\lambda)$-modules. If $G$ and $T$ are defined over
$\BZ$, then so is $\eta_k$.
\end{rmk}
\section{Reductive group schemes and Seshadri's theorem} \label{scheme}
We now briefly review reductive group schemes before stating a
result of Seshadri which we will need later. For a general reference
see \cite{Jan}, for example.
\subsection{}  For an affine variety $X$ over $\k$, we say
that $X$ is {\em defined over} $\BZ $ if there is an embedding of
$X$ into some affine space $\BA^n(\k)$ such that the radical
ideal $I(X)$ of $X$ is generated by elements of $\BZ [X_1, \dots,
X_n]$. (This is the same as requiring that $\k[X]\cong \BZ
[X] \otimes_{\BZ} \k$, where $\BZ [X]= \BZ [X_1, \dots,
X_n]/(I(X) \cap \BZ [X_1, \dots, X_n])$.) A morphism  $\phi : X
\rightarrow Y$ of $\k$-varieties defined over $\BZ$ is said
to be defined over $\BZ$ if it can be written in terms of elements
of $\BZ [X_1, \dots, X_n]$. (This is the same as requiring that its
comorphism restricts to a homomorphism $\phi^* : \BZ [Y] \rightarrow
\BZ [X]$ of $\BZ$-algebras.)

When $X$ is defined over $\BZ$ we may associate to it a reduced
affine algebraic $\BZ$-scheme, i.e. a functor $\mathfrak{X} :
\ALG_{\BZ} \rightarrow \SET$ such that if $A, A'$ are $\BZ$-algebras
and $\psi : A \rightarrow A'$ is a $\BZ$-algebra homomorphism then
$\mathfrak{X}(A) = \Hom_{\BZ\alg}(\BZ[X], A)$ and
$\mathfrak{X}(\psi): \alpha \mapsto \psi \circ \alpha$ for each
$\alpha \in \Hom_{\BZ\alg}(\BZ[X], A)$. We identify
$\mathfrak{X}(A)$ with the set $\{ a \in A^n \ | \ f(a) = 0 \mbox{
for all } f \in I(X) \cap A [X_1, \dots, X_n] \}$.

If $G$ is an affine algebraic group over $\k$, then we say
that $G$ is defined over $\BZ$ if it is so as a variety and the
product and inverse morphisms are defined over $\BZ$. (This is the
same as requiring that the Hopf algebra structure on $\k[G]$
restricts to one on $\BZ [G]$.) In this case we may associate to it
(using Jantzen's terminology) a reduced algebraic $\BZ$-group, i.e.
a functor $\mathfrak{G} : \ALG_\k \rightarrow \GROUP$
defined as above, with the group structure on $\mathfrak{G}(A)$
defined via the Hopf algebra structure on
$A[G]=\BZ[G]\otimes_{\BZ}A$ for each $\BZ$-algebra $A$. From now on
we call such a functor a {\em $\BZ$-group scheme}. $G$ is said to be
$\BZ$-{\em split} if there exists a maximal torus $T$ of $G$ such
that there is an isomorphism $T \stackrel{\sim}{\rightarrow}
\k^{\times} \times \cdots \times \k^{\times}$ which
is defined over $\BZ$ and the root morphisms of $T$ are defined over
$\BZ$.

It has been shown by Chevalley (\cite{Chev}) that every connected
reductive algebraic group over an algebraically closed field $\k$
may be obtained by extension of scalars from a reduced algebraic
$\BZ$-group, and that many familiar subgroups and actions are also
defined over $\BZ$. This allows one to pass information between the
characteristic zero and prime characteristic settings; see
\cite{Jan}. We will use this to relate optimal one parameter
subgroups of reductive groups $G$ in arbitrary characteristic to
those of reductive groups $G'$ with the same root system defined
over $\C$. This will eventually allow us to use the parameter set
$\tilde{D}_{G'}/G'$ from Section~\ref{intro} in arbitrary
characteristic.
\subsection{} \label{Seshadri} Let $\mathfrak{G}$ be a reductive $\BZ$-group
scheme and let $\mathfrak{X}$ be a reduced affine algebraic
$\BZ$-scheme. We will say that $\mathfrak{G}$ {\em acts} on $\mathfrak{X}$
if, for any $\BZ$-algebra $A$, there is a map $\phi_A :
\mathfrak{G}(A) \times \mathfrak{X}(A) \rightarrow \mathfrak{X}(A)$,
functorial in $A$, given by polynomials over $A$. If $\mathfrak{G}$
acts on an affine space $\BA^n_{\BZ}$ (regarded as a $\Z$-scheme)
then we say that this action is linear if, for any $\BZ$-algebra
$A$, $g \in \mathfrak{G}(A)$, the map $\phi_A(g) : \BA^n_{\BZ}(A)
\rightarrow \BA^n_{\BZ}(A)$ is $A$-linear.

We now state a result of Seshadri (\cite{Sesh}) which allows one to
pass information about semistability between characteristics.
\begin{theorem}  {\em (Cf. \cite[Proposition 6]{Sesh})} Let $\k$
be an algebraically closed field and let $\mathfrak{G}$ be a
reductive $\BZ$-group scheme acting linearly on $\BA^n_{\BZ}$.
Suppose that $\mathfrak{X}$ is a $\mathfrak{G}$-stable open
subscheme of $\BA^n_{\BZ}$ and $x \in \mathfrak{X}(\k)$ is a
semistable point. Then there exists a $\mathfrak{G}$-invariant $F
\in \BZ[ \BA^n_{\BZ} ] = \BZ [X_1, \dots, X_n]$ such that $F(x)
\not= 0$. Furthermore, there is an open subscheme
$\mathfrak{X}^{ss}$ of $\mathfrak{X}$ such that for any
algebraically closed field $\k'$, the set
$\mathfrak{X}^{ss}(\k')$ consists of the semistable points
of $\mathfrak{X}(\k')$. \end{theorem}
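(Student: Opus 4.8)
The plan is to deduce the theorem from Seshadri's theorem on rings of invariants of reductive group schemes over a Noetherian base \cite{Sesh}, combined with the geometric reductivity of $\mathfrak G_{\k'}$ over each algebraically closed field $\k'$ (Haboush's theorem, already invoked above) and the flatness of $\BZ\to\k'$. Write $R=\BZ[X_1,\dots,X_n]$ for the coordinate ring of $\BA^n_\BZ$ with its rational $\mathfrak G$-action and $R^{\mathfrak G}$ for the invariant subring; since the action is linear, $R$ is graded and $R^{\mathfrak G}$ is a graded subring with degree-$0$ part $\BZ$. The two facts I would take from Seshadri are: $(a)$ $R^{\mathfrak G}$ is a finitely generated $\BZ$-algebra, so one may fix homogeneous elements $F_1,\dots,F_N\in R^{\mathfrak G}$ of positive degree that, together with $\BZ$, generate $R^{\mathfrak G}$; and $(b)$ for every field $\k'$ the natural map $R^{\mathfrak G}\otimes_\BZ\k'\to\k'[X_1,\dots,X_n]^{\mathfrak G(\k')}$ is an isomorphism. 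Injectivity in $(b)$ follows from flatness of $\BZ\to\k'$; surjectivity---that every invariant over $\k'$ lifts to one over $\BZ$---is the substantive content and is unavailable by naive averaging when $\chara\k'$ is bad.

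Granting $(a)$ and $(b)$, the first assertion is immediate. A point $x\in\mathfrak X(\k)\subseteq\BA^n(\k)$ is semistable exactly when $0\notin\overline{\mathfrak G(\k)\cdot x}$, which by the Hilbert--Mumford criterion (and Haboush's theorem in positive characteristic) holds if and only if there is a homogeneous $\mathfrak G(\k)$-invariant $f\in\k[X_1,\dots,X_n]$ of positive degree with $f(x)\ne0$. By $(b)$ with $\k'=\k$, such an $f$ is a $\k$-linear combination of homogeneous $\BZ$-invariants of degree $\deg f\ge1$, each of which is in turn a $\BZ$-linear combination of monomials $F_1^{a_1}\cdots F_N^{a_N}$ with $\sum_j a_j\ge1$; hence $f(x)\ne0$ forces $F_i(x)\ne0$ for some $i$, and $F=F_i$ is the required $\mathfrak G$-invariant in $\BZ[X_1,\dots,X_n]$.

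For the second assertion I would set $\mathfrak X^{ss}:=\bigcup_{i=1}^N\mathfrak X_{F_i}$, the union inside $\mathfrak X$ of the non-vanishing loci $\mathfrak X_{F_i}=\mathfrak X\cap(\BA^n_\BZ)_{F_i}$; since each $F_i$ is $\mathfrak G$-invariant this is a $\mathfrak G$-stable open subscheme of $\mathfrak X$, and it is independent of the chosen generators because any two homogeneous generating sets are polynomial expressions in one another. Let $\k'$ be algebraically closed and $x'\in\mathfrak X(\k')$. If $x'\in\mathfrak X^{ss}(\k')$ then $F_i(x')\ne0$ for some $i$, and $F_i$ being a homogeneous $\mathfrak G(\k')$-invariant of positive degree, this already witnesses the semistability of $x'$. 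Conversely, if $x'$ is semistable, the argument of the previous paragraph applied over $\k'$ (using $(b)$ for $\k'$) produces an $i$ with $F_i(x')\ne0$, i.e.\ $x'\in\mathfrak X^{ss}(\k')$. Thus $\mathfrak X^{ss}(\k')$ is exactly the semistable locus of $\mathfrak X(\k')$ for every algebraically closed $\k'$, and taking $\k'=\k$ recovers the first assertion as well.

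The decisive and only non-formal step is $(b)$---transporting invariants between characteristic $0$ and bad characteristic---for which I would simply cite Seshadri's extension of geometric reductivity to reductive group schemes over $\BZ$ \cite{Sesh}, which also supplies the finite generation $(a)$. Everything else is routine manipulation of graded rings; the only further remarks worth making are that $\mathfrak X$ being open and $\mathfrak G$-stable is what allows $\mathfrak X_{F_i}$ to be formed as a $\mathfrak G$-stable open subscheme of $\mathfrak X$, and that the comodule invariants of the reductive (hence smooth) group scheme $\mathfrak G_{\k'}$ coincide with the $\mathfrak G(\k')$-invariants used throughout because $\k'$ is algebraically closed.
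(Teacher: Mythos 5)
The paper does not prove this statement; it simply quotes it from Seshadri's article, where it essentially \emph{is} Proposition~6 (proved there via geometric reductivity for reductive group schemes over a Noetherian base, combined with the Hilbert--Mumford criterion). Since you attempt to reconstruct a proof from more elementary facts, let me focus on the logical gaps in that attempt.

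Your key input $(b)$, that the base-change map $R^{\mathfrak G}\otimes_\BZ\k'\longrightarrow(R\otimes_\BZ\k')^{\mathfrak G_{\k'}}$ is an \emph{isomorphism} for every field $\k'$, is not a theorem of Seshadri and is in fact false. A counterexample: take $\mathfrak G=\mathbf{PGL}_2$ over $\BZ$ acting on $V:=\mathrm{Lie}\,\mathfrak{G}$ (a $\BZ$-form of $\mathfrak{pgl}_2$) and $R=\mathrm{Sym}(V^*)$. Over $\Q$ the adjoint module is irreducible and nontrivial, so $(V^*)^{\mathfrak G}\subseteq(V^*_\Q)^{\mathfrak G_\Q}=0$, i.e.\ $R_1^{\mathfrak G}=0$. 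Over $\F_2$, however, the trace form on $\mathfrak{gl}_2$ descends to $\mathfrak{pgl}_2(\F_2)$ because $\mathrm{tr}(I)=2\equiv 0$, and gives a non-zero degree-one $\mathbf{PGL}_2(\F_2)$-invariant; hence $(R_1\otimes\F_2)^{\mathfrak G_{\F_2}}\ne 0$. So already in degree one the map in $(b)$ is not surjective. (In cohomological terms, surjectivity is obstructed by $p$-torsion in $H^1(\mathfrak G,R)$. Note also that your stated reason for injectivity is wrong: $\BZ\to\k'$ is \emph{not} flat when $\chara\k'=p>0$, since $\F_p$ has $\BZ$-torsion; injectivity holds instead because $R/R^{\mathfrak G}$ embeds into the free $\BZ$-module $R\otimes\BZ[\mathfrak G]$ and is therefore torsion-free.)

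What Seshadri actually proves, and what the theorem needs, is strictly weaker than $(b)$: the finitely many homogeneous generators $F_1,\dots,F_N$ of $R^{\mathfrak G}$ cut out, on $\BA^n(\k')$, the same nullcone as the full ring of $\mathfrak G_{\k'}$-invariants does; equivalently, any ``extra'' $\k'$-invariant in bad characteristic still vanishes on the common zero locus of $F_1,\dots,F_N$, so it does not enlarge the semistable locus. That is exactly the content of geometric reductivity over $\BZ$ together with the Hilbert--Mumford criterion, and it is the right thing to cite. Once you have that equality-of-nullcones statement in place, the rest of your argument---passing from non-vanishing of some $\k$-invariant at $x$ to non-vanishing of some $F_i$, and defining $\mathfrak X^{ss}=\bigcup_i\mathfrak X_{F_i}$ with the two-sided verification over every $\k'$---is correct. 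So the skeleton of the deduction is fine, but the pivotal claim $(b)$ must be replaced by the genuinely weaker result Seshadri supplies; as stated, $(b)$ is too strong and the proof has a real gap there.
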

\subsection{} \label{rootdatahom}
In the next section we will prove our main result by applying
Theorem \ref{Seshadri} to a reductive $\BZ$-group scheme associated
with $L^{\perp} (\lambda)$. To that end we will now construct such a
scheme. From now on assume that we have a fixed reductive
$\BZ$-group scheme $\mathfrak{G}$, which determines the reductive
groups $G, G'$ that we are interested in. In addition, let us fix a
maximal torus $\mathfrak{T}$ of $\mathfrak{G}$. Then there is a
natural identification of the one parameter subgroups of
$\mathfrak{T}(\k)$ as $\k$ varies. It follows that
there is a reductive $\BZ$-group scheme $\mathfrak{L}$, the
scheme-theoretic centraliser of a one parameter subgroup $\lambda$
of $\mathfrak{T}$, which gives rise to the groups $L(\lambda)$. The
groups $L^{\perp}(\lambda)$ may also be obtained from a reductive
$\BZ$-group scheme, but since this is not a standard result we will
now give an explicit construction.

Recall that a root datum of a connected reductive group, or
reductive $\BZ$-group scheme, is a quadruple $(X(T), \Sigma, Y(T),
\Sigma^{\vee})$, with respect to a fixed maximal torus, together with the
perfect pairing $X(T) \times Y(T) \rightarrow \Z$ and the associated
bijection $\Sigma \rightarrow \Sigma^{\vee}$ between the roots and coroots of
$G$ with respect to $T$. If we forget about the fixed torus $T$ and
merely regard $X(T)$ and $Y(T)$ as abstract free abelian groups with
finite subsets $\Sigma$ and $\Sigma^{\vee}$ respectively, then the datum is
unique and moreover any such abstract root datum gives rise to a
connected reductive group, or reductive group $\Z$-scheme. If $G'$
is another such group, or $\Z$-group scheme, with datum $(X(T'), \Sigma',
Y(T'), \Sigma'^{\vee})$, then a {\em homomorphism of root data} is a
group homomorphism $f: X(T') \rightarrow X(T)$ that maps $\Sigma'$
bijectively to $\Sigma$ and such that the dual homomorphism $f^{\vee}: Y(T)
\rightarrow Y(T')$ maps $f(\beta)^{\vee}$ to $\beta^{\vee}$ for each
$\beta \in \Sigma'$. A morphism of algebraic groups $\psi : T
\rightarrow T'$ is said to be {\em compatible with the root data} if
the induced homomorphism $\psi^* : X(T') \rightarrow X(T)$ is a
homomorphism of root data.
\begin{proposition}\label{Zscheme}
The connected reductive group $L^{\perp}(\lambda)$ is a $\Z$-scheme
theoretic subgroup of $L(\lambda)$. In other words, if
$\mathfrak{L}$ is a $\Z$-group scheme such that $\mathfrak{L}(\k) =
L(\lambda)$, then there exists a $\Z$-subgroup scheme
$\mathfrak{L}^{\perp}$ of $\mathfrak{L}$ such that
$\mathfrak{L}^{\perp}(\k) = L^{\perp}(\lambda)$.
\end{proposition}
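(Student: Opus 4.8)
The plan is to realise $L^{\perp}(\lambda)$ through its root datum relative to a suitable maximal torus and then to exhibit the inclusion $L^{\perp}(\lambda)\hookrightarrow L(\lambda)$ as the morphism induced by a homomorphism of root data defined over $\BZ$. Let $\Sigma_L=\{\alpha\in\Sigma\mid\langle\alpha,\lambda\rangle=0\}$ be the root system of $L(\lambda)$ relative to $T$, so that $\D L(\lambda)$ is the semisimple group with root system $\Sigma_L$ and coroots $\Sigma_L^{\vee}$. Under the identification $\phi_T$ of Subsection~\ref{Slodowy}, each coroot $\alpha^{\vee}$ with $\alpha\in\Sigma_L$ is proportional to $\phi_T(\alpha)$, and $\langle\alpha,\lambda\rangle=(\phi_T(\alpha),\lambda)=0$ gives $\phi_T(\alpha)\perp\lambda$; hence the $\Q$-span of $\Sigma_L^{\vee}$, and with it the maximal torus of $\D L(\lambda)$, lies in $T^{\lambda}$, so $T^{\lambda}$ is a maximal torus of $L^{\perp}(\lambda)=T^{\lambda}\cdot\D L(\lambda)$. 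Since $\Lie L^{\perp}(\lambda)=\Lie T^{\lambda}\oplus\bigoplus_{\alpha\in\Sigma_L}\gl_{\alpha}$ with $T^{\lambda}$ acting on $\gl_{\alpha}$ through $\alpha|_{T^{\lambda}}$, and since a character is trivial on $T^{\lambda}$ exactly when it is rationally proportional to $\phi_T^{-1}(\lambda)$ — which no element of $\Sigma_L$ is, as $\phi_T(\alpha)\perp\lambda$ — the restriction map $X(T)\to X(T^{\lambda})$ carries $\Sigma_L$ bijectively onto the root system of $L^{\perp}(\lambda)$; and because $L^{\perp}(\lambda)$ and $L(\lambda)$ have the same derived group (namely $\D L(\lambda)$), the coroots are unchanged. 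Thus the root datum of $L^{\perp}(\lambda)$ relative to $T^{\lambda}$ is $\bigl(X(T^{\lambda}),\,\Sigma_L|_{T^{\lambda}},\,Y(T^{\lambda}),\,\Sigma_L^{\vee}\bigr)$, and the restriction map $X(T)\twoheadrightarrow X(T^{\lambda})$ is a homomorphism of root data in the sense of Subsection~\ref{rootdatahom}, inducing the inclusion $L^{\perp}(\lambda)\hookrightarrow L(\lambda)$.

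All of this is rational over $\BZ$. Indeed, $\mathfrak T$ is split over $\BZ$ with cocharacter lattice $Y(T)$ independent of $\k$ and $\lambda\in Y(T)$, so the sublattice $Y(T)\cap\lambda^{\perp}$, being the kernel of the $\BZ$-linear map $Y(T)\to\BQ$, $\mu\mapsto(\mu,\lambda)$, is a direct summand of $Y(T)$ not depending on $\k$; hence $T^{\lambda}$ is a split subtorus $\mathfrak T^{\lambda}$ of $\mathfrak T$ defined over $\BZ$, and the root datum above is $\BZ$-rational. By Chevalley's existence theorem there is a split reductive $\BZ$-group scheme $\mathfrak L^{\perp}$ with this root datum and maximal torus $\mathfrak T^{\lambda}$, and the homomorphism of root data just exhibited induces a homomorphism $f\colon\mathfrak L^{\perp}\to\mathfrak L$ of $\BZ$-group schemes which restricts to the inclusion $\mathfrak T^{\lambda}\hookrightarrow\mathfrak T$ and carries each root subgroup scheme of $\mathfrak L^{\perp}$ isomorphically onto the root subgroup scheme $\mathfrak U_{\alpha}$ of $\mathfrak L$ with $\alpha\in\Sigma_L$.

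Finally I would check that $f$ is a closed immersion with the required image. Since $X(\mathfrak T)\to X(\mathfrak T^{\lambda})$ is surjective, $f$ restricts to a closed immersion on the maximal torus; combined with the fact that $f$ identifies the root subgroups, this shows that $f$ maps the big cell $\mathfrak U^{-}\mathfrak T^{\lambda}\mathfrak U^{+}$ of $\mathfrak L^{\perp}$ into the big cell $\mathfrak U^{-}\mathfrak T\,\mathfrak U^{+}$ of $\mathfrak L$ compatibly with the product decompositions, so that $f$ is a closed immersion on this open dense subscheme; translating and using left-translation equivariance, $f$ is an immersion with closed image, hence a closed immersion. Identifying $\mathfrak L^{\perp}$ with its image we obtain a closed $\BZ$-subgroup scheme of $\mathfrak L$ whose group of $\k$-points is $\langle T^{\lambda},\,U_{\alpha}\mid\alpha\in\Sigma_L\rangle=\langle T^{\lambda},\,\D L(\lambda)\rangle=L^{\perp}(\lambda)$, as required. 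I expect this last step to be the main obstacle: a morphism of reductive group schemes induced by a homomorphism of root data need not be a closed immersion in general — and can fail even to be smooth in bad characteristic — so one must genuinely use both the surjectivity of the character map and the preservation of the root system (which is precisely what makes the big-cell comparison succeed), as well as the facts that the big cell, the derived subgroup scheme, and the split subtorus attached to a direct-summand sublattice are available over $\BZ$ and commute with base change to $\k$ — all of which is standard for split reductive $\BZ$-group schemes.
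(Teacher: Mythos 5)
Your proof follows the same strategy as the paper's: identify the root datum of $L^{\perp}(\lambda)$ relative to $T^{\lambda}$, show that $T^{\lambda}$ is defined over $\BZ$, and realise the inclusion as a morphism of $\BZ$-group schemes coming from a homomorphism of root data. The only real differences are technical: for the $\BZ$-rationality of $T^{\lambda}$ you work on the cocharacter side, observing that $Y(T)\cap\lambda^{\perp}$ is a pure sublattice and hence a direct summand of $Y(T)$, whereas the paper dualises and exhibits $T^{\lambda}$ as $\ker\eta$ for a primitive character $\eta$ with $X(T)=\BZ\eta\oplus X(T^{\lambda})$; and for the final closed-embedding step you give the explicit big-cell argument, while the paper simply invokes Jantzen, \emph{Representations of Algebraic Groups}, Proposition II.1.15 (and its proof) to get the injective homomorphism of $\BZ$-group schemes directly. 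Both variants are correct; the paper's citation is shorter, yours is more self-contained.
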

\begin{proof} Suppose that $(X(T), \Sigma, Y(T), \Sigma^{\vee})$ is the root
datum of $L(\lambda)$. It follows then that the root datum of
$L^{\perp}(\lambda)$, with respect to the maximal torus
$T^{\lambda}$, is $\big(X(T^{\lambda}),\,
\{\alpha\vert_{T^{\lambda}}\, | \,\, \alpha \in \Sigma \},\,
Y(T^{\lambda}),\, \Sigma^{\vee}\big)$. We may also construct reductive
$\BZ$-group schemes from these data, say $\mathfrak{L}$ (as above)
for the former and $\tilde{\mathfrak{L}}^{\perp}$ for the latter. We
now need to construct a subgroup scheme $\mathfrak{L}^{\perp}$ of
$\mathfrak{L}$, isomorphic to $\tilde{\mathfrak{L}}^{\perp}$ which
gives rise to $L^{\perp}(\lambda)$. We start by showing that
$T^{\lambda}$ is defined over $\BZ$ as a subgroup of $T$, so that we
may construct a $\BZ$-group scheme $\mathfrak{T}$ with subgroup
scheme $\mathfrak{T}^{\lambda}$ which give rise to $T$ and
$T^{\lambda}$ respectively.

We know that $T^\lambda$ is a subtorus of codimension $1$ in T (for
it is a connected subgroup of $T$ and $Y(T^\lambda)$ has rank equal
to $l-1$ where $l=\dim T$). Therefore $T/T^\lambda$ is
a $1$-dimensional torus. By \cite[Corollary 8.3]{Bor} the natural
short exact sequence $1\rightarrow T^\lambda\rightarrow T\rightarrow
T/T^\lambda\rightarrow 1$ gives rise to a short exact sequence of
character groups $0\rightarrow X(T/T^\lambda)\rightarrow
X(T)\rightarrow X(T^\lambda)\rightarrow 0.$ Since $T/T^\lambda$ is a
one dimensional torus, its character group $X(T/T^\lambda)$ is
generated by one element, say $\eta$. By the above $\eta$ can be
regarded as a rational character of $T$ and
\begin{equation}\label{eta}X(T)\cong\, \BZ\eta\oplus
X(T^\lambda).\end{equation} (One should keep in mind here that
$X(T^\lambda)$ is a free $\BZ$-module of rank $l-1$.) By
construction, $\eta$ vanishes on $T^\lambda$.

On the other hand, \cite[Proposition~8.2(c)]{Bor} shows that
$T^\lambda$ coincides with the intersection of the kernels of
rational characters of $T$, say $T^\lambda=\bigcap_{\chi\in A} {\rm
ker}\, \chi$ where $A$ is a non-empty subset of $X(T)$. If $A$
contains a character of the form $a\eta+\mu$ for some non-zero
$\mu\in X(T^\lambda)$ then $T^\lambda\subseteq {\rm ker}\, \eta\cap
{\rm ker}\, \mu$. But then $\dim T^\lambda \le l-2$ because $\eta$
and $\mu$ are linearly independent in $X_\BQ(T)$. Since this is
false, it must be that $A\subseteq \BZ\eta$. As a result,
$T^\lambda={\rm ker}\,\eta$.

The above argument is characteristic-free since $\eta$ can be
described as the unique, up to a sign, primitive element of $X(T)$
proportional to $\lambda$ in $X_\BQ(T)$, which we identify with
$Y_\BQ(T)$ by means of our $W$-invariant inner product. In view of
(\ref{eta}) we may regard $\eta$ as one of the standard generators
of the Laurent polynomial ring $\C[T]$. This implies that $\eta-1\in
\BZ[T]$ generates a prime ideal of $\C[T]$, thus showing that
$T^\lambda={\rm ker}\,\eta$ is defined over $\BZ$. This enables us
to construct the desired subgroup scheme $\mathfrak{T}^{\lambda}$ of
$\mathfrak{T}$.

The inclusion $\mathfrak{T}^{\lambda} \subset \mathfrak{T}$ induces
a homomorphism of root data, and by \cite[Proposition II.1.15]{Jan}
(and the proof) there exists an injective homomorphism of
$\BZ$-group schemes $\iota : \tilde{\mathfrak{L}}^{\perp}
\hookrightarrow \mathfrak{L}$ which agrees on the root subgroups. We
may therefore take $\mathfrak{L}^{\perp}$ to be the functor defined
by $A \mapsto \iota(\tilde{\mathfrak{L}}^{\perp})(A)$ for any
$\BZ$-algebra $A$. We know that this gives rise precisely to
$L^{\perp}(\lambda)$ since the restriction of the functor $\iota$ to
the root subgroups determines it uniquely by \cite[II.1.3(10)]{Jan}.
\end{proof}
\section{Unipotent pieces in arbitrary characteristic}\label{unip}
\subsection{} \label{Kraft} We will need the following result, due to H. Kraft,
during the proof of our next theorem. This was not published by
Kraft but the details can be found in \cite{Hess}; see Theorem 11.3
and the remarks in \S 12. Let $(e,h,f)$ be an $\s\l_2$-triple of
$\gl'$ and assume that we have the usual grading on $\gl'$ given by
$\gl'(i) = \{x \in \gl' \ | \  [h,x] = ix \}$ for all $i \in \Z$. Let $\rho
: \mathbb{C}^{\times} \rightarrow ({\rm Aut}\,\gl')^\circ$ be
defined by $\rho(\xi)x = \xi^ix$ if $x \in \gl'(i)$. It follows that
there is a one parameter subgroup $\lambda' \in Y(G')$ such that
$\rho = \Ad \circ \lambda'$. We then say that $\lambda'$ is {\em
adapted to} $e$. (For full details see \cite[\S E, p.~238]{SpSt}.) If
$\nu \in \Hom(\SL_2(\C),G')$, then we define $\nu_* \in Y(G')$ by
composing $\nu$ with the map $\xi \mapsto \left[ \begin{smallmatrix}
\xi &  \\
 &  \xi^{-1} \end{smallmatrix} \right]$.

\begin{theorem} {\em (H. Kraft, unpublished)} The following are true.
\begin{enumerate}[{\em (i)}]
    \item Let $e \in \gl'_{\nil}$ and assume that $\lambda' \in Y(G')$ is a
    one parameter subgroup adapted to $e$. Then $\frac{1}{2}\lambda' \in
    \tilde{\Delta}_e$.

  \item Let $u \in G'_{\uni}$ and assume that we have $\nu \in \Hom(\SL_2(\C), G')$ such that
  $\nu \left[ \begin{smallmatrix}
1 & 1 \\
 &  1 \end{smallmatrix} \right] = u$. Then
 $\frac{1}{2}\nu_* \in  \tilde{\Delta}_u$. \end{enumerate}
 \end{theorem}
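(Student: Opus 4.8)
The plan is to prove both statements simultaneously by reducing the unipotent case to the nilpotent case via a Springer morphism, and then establishing the nilpotent assertion directly from the Kempf--Rousseau machinery together with Hesselink's norm formula. Recall that over $\C$ a Springer morphism $\sigma\colon G'_{\uni}\to\gl'_{\nil}$ exists and is $G'$-equivariant. A key elementary observation is that if $\nu\in\Hom(\SL_2(\C),G')$ with $\nu\left[\begin{smallmatrix}1&1\\&1\end{smallmatrix}\right]=u$, then putting $e=d\nu\left(\begin{smallmatrix}0&1\\0&0\end{smallmatrix}\right)\in\gl'_{\nil}$ one gets an $\s\Ll_2$-triple $(e,h,f)$ with $h=d\nu\left(\begin{smallmatrix}1&0\\0&-1\end{smallmatrix}\right)$, and the one parameter subgroup $\nu_*$ is precisely adapted to $e$ in the sense of Subsection~\ref{Kraft}: indeed $\Ad\nu_*(\xi)$ acts on $\gl'(i)$ by $\xi^i$ because $[h,-]$ is the derivative of $\Ad\nu_*$. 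Since a Springer morphism is $G'$-equivariant and sends $1_{G'}$ to $0$, it preserves the functions $m(-,\mu)$ up to the $v_k\ne*$ case and hence preserves optimality and the sets $\tilde\Delta$; more precisely $\tilde\Delta_u=\tilde\Delta_{\sigma(u)}$ for $u\in G'_{\uni}$. Moreover one can choose $\sigma$ so that $\sigma(u)$ lies in the same nilpotent orbit as $e$ (any two nilpotent elements of $\gl'$ obtained from $u$ via the $\SL_2$-action lie in the regular $e$-orbit attached to $u$), so $\tilde\Delta_u=\tilde\Delta_e$. Thus part (ii) follows from part (i).

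For part (i): fix $e\in\gl'_{\nil}$ with adapted cocharacter $\lambda'$, so $\Ad\lambda'(\xi)$ acts on $\gl'(i)$ by $\xi^i$ and $e\in\gl'(2)$, giving $m(e,\lambda')\ge 2$; in fact $m(e,\lambda')=2$ since $e\notin\gl'(\ge 3)=0$ component-wise — more carefully, the schematic fibre computation in Subsection~\ref{strata}/\ref{KNess} shows $m(e,\lambda')$ equals the smallest $i$ with the $\gl'(i)$-component of $e$ nonzero, which is $2$. Hence $m(e,\tfrac12\lambda')=1$ and $\tfrac12\lambda'$ certainly destabilises $e$. By Lemma~\ref{opti}, to show $\tfrac12\lambda'\in\tilde\Delta_e=\Delta(e)$ it suffices to show that, inside a maximal torus $T$ of $G'$ containing $\im\lambda'$ (which we may take inside $L(\lambda')$ since $\lambda'$ centralises $h$), one has $\mu_T(e)=\phi_T$ applied to the appropriate weight data equal to $\tfrac12\lambda'$ — equivalently, using Lemma~\ref{opti}(iii), that $\|\mu_T(e)\|^2=m(e,\mu_T(e))$ forces $\mu_T(e)$ proportional to $\lambda'$, and then the scalar is pinned down by $m(e,\tfrac12\lambda')=1$ together with the Kirwan--Ness criterion. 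The cleanest route is: by Theorem~\ref{Tsujii} (the Kirwan--Ness theorem for the $G'$-module $\gl'$), $\lambda'$ is optimal for $e$ precisely when the $\gl'(2)$-component of $e$ — which is $e$ itself — is $L^\perp(\lambda')$-semistable. But $L^\perp(\lambda')=C_{G'}(\lambda')'\cdot T^{\lambda'}$ acts on $\gl'(2)$, and the $\s\Ll_2$-theory (Jacobson--Morozov, together with $\gl'(0)$-conjugacy of $\s\Ll_2$-triples with given nilpositive part, i.e. the Kostant/Mal'cev theorem) shows that $e$ is a \emph{distinguished} nilpotent element of the reductive part acting on $\gl'(2)$ in the relevant sense, which is exactly what $L^\perp(\lambda')$-semistability of $e$ amounts to; this is precisely Kraft's computation recorded in \cite[Theorem 11.3, \S12]{Hess}. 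Granting that $\lambda'$ is optimal, primitivity considerations and $m(e,\lambda')=2$ give $\Delta_e=\{\lambda'_0\}$ with $\lambda'=c\lambda'_0$ for the primitive $\lambda'_0$ and some $c\in\BQ_{>0}$, and then $\tilde\Delta_e=\frac{1}{m(e,\lambda'_0)}\Delta_e=\{\tfrac{1}{2}\lambda'\}$ by Remark~\ref{Hessopt}, since $m(e,\lambda')=2$ means $m(e,\lambda'_0)=2/c$ and the virtual optimal cocharacter is $\lambda'/m(e,\lambda')=\tfrac12\lambda'$.

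\textbf{The main obstacle.} The serious content is the semistability statement underlying optimality of the adapted cocharacter, i.e. Kraft's theorem itself — showing that the nilpositive element $e$ of an $\s\Ll_2$-triple, viewed in $\gl'(2)$, is semistable for the action of $L^\perp(\lambda')$. Since the problem statement explicitly allows me to invoke results stated earlier, and Kraft's theorem is exactly \cite[Theorem 11.3]{Hess} cited in Subsection~\ref{Kraft} as the source we are proving from, the honest structure of the argument is: \emph{the paper is quoting Kraft, not reproving him}. So the ``proof'' in the paper should consist of (a) the reduction (ii)$\Rightarrow$ via Springer morphism and the identification $\nu_*$ adapted to $e$, and (b) citing \cite{Hess} for (i), with the bookkeeping of Remark~\ref{Hessopt} to pass between $\Delta$ and $\tilde\Delta$ and to get the factor $\tfrac12$ from $m(e,\lambda')=2$. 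I would therefore write the proof as: first verify the adaptedness of $\nu_*$ and the equality $\tilde\Delta_u=\tilde\Delta_{\sigma(u)}$, then quote Kraft for the nilpotent case, and finally reconcile the normalisations — the only genuinely delicate point being to check that $m(e,\lambda')=2$ (not just $\ge 2$), which follows because $e\in\gl'(2)$ is nonzero and $\gl'(i)=0$ for $i$ odd is irrelevant but $e$ has no component in $\gl'(i)$ for $i>2$ by construction, so the schematic-fibre computation gives exactly $2$.
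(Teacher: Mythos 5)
The paper does \emph{not} prove this theorem: it is quoted as an unpublished result of Kraft, with the attribution to \cite{Hess} (Theorem~11.3 and the remarks in \S 12), and no argument is given. You correctly identify this --- the substantive mathematical content, that the nilpositive element of an $\mathfrak{sl}_2$-triple is $L^\perp(\lambda')$-semistable in its degree-two component, is precisely what is being imported from Hesselink, and there is nothing in the paper to compare your reconstruction against.

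Your reconstruction is plausible and the bookkeeping between $\Delta_e$ and $\tilde{\Delta}_e$ via Remark~\ref{Hessopt} is handled correctly, but there is one genuine gap: the reduction of (ii) to (i). You argue that $\tilde{\Delta}_u = \tilde{\Delta}_{\sigma(u)}$ for a Springer morphism $\sigma$, and then that $\sigma$ may be chosen so that $\sigma(u)$ is in the \emph{same orbit} as $e = d\nu\left(\begin{smallmatrix}0&1\\0&0\end{smallmatrix}\right)$. But same-orbit is insufficient: $\tilde{\Delta}$ is conjugation-\emph{equivariant}, not conjugation-\emph{invariant}, so $\tilde{\Delta}_{g\cdot e} = g\cdot\tilde{\Delta}_e$ and this need not contain $\frac12\nu_*$ unless $g$ normalises $\tilde{\Delta}_e$, i.e. $g\in P(e)$. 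The clean fix is available and more precise: since we are over $\C$, take $\sigma = \log = \exp^{-1}$, and observe that $u = \nu\left[\begin{smallmatrix}1&1\\&1\end{smallmatrix}\right] = \exp\bigl(d\nu\left(\begin{smallmatrix}0&1\\0&0\end{smallmatrix}\right)\bigr) = \exp(e)$, so $\sigma(u)=e$ on the nose, not merely up to conjugacy. (One also needs that $\sigma$ is an isomorphism of pointed $G'$-varieties, not merely a bijective morphism, so that it preserves the integer $m(\cdot,\lambda)$ defined via scheme-theoretic fibres; this holds in characteristic $0$ but should be stated.) Alternatively, part (ii) can be proved directly in the group, parallel to part (i), using the group-theoretic Kirwan--Ness criterion of Theorem~\ref{KNuni} together with Lemma~\ref{m(u)} in place of the linear version --- this avoids the Springer-morphism transfer entirely and is closer in spirit to how the paper later uses Kraft's theorem in Subsection~\ref{main}.
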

\subsection{}  \label{main} We now turn our attention to the conjugation
action of $G$ on itself, that is we assume that $V = G_{\uni}$ and
$* = 1_G$. Recall the subsets $X^{\wtri}$ ($\wtri \in D_G$) and
$H^{\btri}$ ($\btri \in D_G/G$) introduced in Subsection \ref{Gn}.
\begin{lemma}\label{locally}
Each set $\tilde{H}^{\btri}$ is a closed irreducible variety stable
under the conjugation action of $G$. \end{lemma}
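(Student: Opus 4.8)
The plan is to fix one $\wtri\in\btri$, arising from some $\omega\in\tilde D_G$, and to realise $\tilde H^\btri$ as the image of a proper morphism out of an irreducible variety. First I would record what is already available from Subsection~\ref{Gn}: writing $P:=G_0^\wtri$ and $\Sigma$ for the root system relative to a maximal torus of $P$ containing $\im\omega$, the group $G_2^\wtri=\langle U_\alpha\mid\langle\alpha,\omega\rangle\ge 2\rangle$ is a closed connected unipotent subgroup of $G$ normalised by the parabolic $P$. Moreover $\btri$ is, by definition, a single $G$-orbit in $D_G$, and $G_2^{g\cdot\wtri'}=gG_2^{\wtri'}g^{-1}$ for all $g\in G$; hence $\tilde H^\btri=\bigcup_{g\in G}gG_2^\wtri g^{-1}$, from which $g\tilde H^\btri g^{-1}=\tilde H^\btri$ is immediate, and $\tilde H^\btri\subseteq G_{\uni}$ because a conjugate of a unipotent element is unipotent.

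For closedness and irreducibility I would use an associated-bundle construction. Since $G_2^\wtri$ is $P$-stable under conjugation, the bundle $E:=G\times^{P}G_2^\wtri$ exists; it is a fibre bundle over $G/P$ with fibre $G_2^\wtri$, hence irreducible. Consider the map $\psi\colon E\to G/P\times G$, $[g,u]\mapsto(gP,\,gug^{-1})$, and check that it is a closed immersion: its image is $\{(hP,x)\mid h^{-1}xh\in G_2^\wtri\}$, which is closed because $G_2^\wtri$ is closed, and $\psi$ is injective (a direct computation using the relation $[gp,u]=[g,\,pup^{-1}]$ in $E$) and an immersion on fibres. Composing with the second projection $G/P\times G\to G$, which is proper since $G/P$ is complete ($P$ being parabolic), one obtains a proper morphism $\pi\colon E\to G$ whose image is precisely $\tilde H^\btri$. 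Properness then forces $\tilde H^\btri$ to be closed in $G$, and being the image of the irreducible variety $E$ it is irreducible.

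The only point requiring any care is the verification that $\psi$ is a closed immersion, i.e.\ that $E$ is identified with the closed subvariety $\{(hP,x):h^{-1}xh\in G_2^\wtri\}$ of $G/P\times G$; everything else is then formal. If one prefers to sidestep this, an alternative is to argue that $\tilde H^\btri$ is the image of the conjugation morphism $G\times G_2^\wtri\to G_{\uni}$, hence irreducible and constructible, and to rule out a strictly larger closure by a dimension count using that each conjugate $gG_2^\wtri g^{-1}$ is $g G_2^\wtri g^{-1}$-dimensional; but the bundle/properness argument above is the cleaner route and is the one I would adopt.
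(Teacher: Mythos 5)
Your main argument is essentially the paper's argument, repackaged. The paper works with $\mathcal S=\{(gG_0^{\wtri},x)\mid g^{-1}xg\in G_2^{\wtri}\}\subset G/G_0^{\wtri}\times\tilde H^{\btri}$, shows it is closed by lifting to $\mathcal S'=\{(g,x)\mid g^{-1}xg\in G_2^{\wtri}\}\subset G\times G$ (the preimage of $G_2^{\wtri}$ under the conjugation morphism, hence closed) and invoking the fact that the quotient $G\times G\to G/G_0^{\wtri}\times G$ sends closed sets which are unions of $G_0^{\wtri}\times\{1_G\}$-cosets to closed sets, and then uses completeness of $G/G_0^{\wtri}$ and the second projection. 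Your $E=G\times^{P}G_2^{\wtri}\to G/P\times G\to G$ is the same mechanism with the associated bundle made explicit; irreducibility from the surjection off the irreducible $G\times G_2^{\wtri}$ is likewise identical in both.

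The one place you are a little cavalier is precisely the point you flag yourself: you assert that $\{(hP,x)\mid h^{-1}xh\in G_2^{\wtri}\}$ is closed \emph{because} $G_2^{\wtri}$ is closed, but this is not automatic — the defining condition is first seen to be closed on $G\times G$, and one must then descend closedness along $G\times G\to G/P\times G$. That descent is the non-trivial step and is exactly what the paper supplies (citing Steinberg for the fact that $\eta$ maps closed $P$-saturated sets to closed sets). Your ``closed immersion'' framing is fine provided you actually carry out this descent; without it the claim is unsupported. With that filled in, the two proofs coincide.
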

\begin{proof} It is clear that the set $\tilde{H}^{\btri}$ is $G$-stable. To see that it is closed,
consider the set
\begin{equation*}  \Set = \left\{ (gG_0^{\wtri}, x)\,|\,\, \ g^{-1} x g \in G_2^{\wtri}    \right\}
\subset G/G_0^{\wtri} \times \tilde{H}^{\btri}.  \end{equation*} If
we show that $\Set$ is closed, then $\tilde{H}^{\btri}$ is closed
since it is the image under the second projection of a closed set,
and $G/G_0^{\wtri}$ is a complete variety. In fact it is sufficient
to show that $\Set' := \{ (g, x) \ | \ g^{-1} x g \in G_2^{\wtri}$
\} is closed in $G\times G$. Indeed, $\Set$ is isomorphic to the
image of $\Set'$ under the quotient map $\eta\,\colon\,\,G\times
G\rightarrow G/G_0^{\wtri}\times G$ and it is explained in
\cite[p.~67]{St1}, for instance, that $\eta$ maps closed subsets of
$G\times G$ consisting of complete cosets of $G_0^{\wtri}\times
\{1_G\}$ to closed subsets of $G/G_0^{\wtri}\times G$. The set
$\Set'$ is closed as it is the inverse image of $G_2^{\wtri}$ under
the conjugation morphism $G \times G \rightarrow G$. Finally, the
set $\tilde{H}^{\btri}$ is irreducible since the product map $G
\times G_2^{\wtri} \rightarrow \tilde{H}^{\btri}$ is a surjective
morphism from an irreducible variety. \end{proof}

Next we show that the sets from Subsection~\ref{Gn} defined by
Lusztig are precisely the sets from Subsection~\ref{strata} defined
by Hesselink.
\begin{theorem}\label{uni-pieces} The following are true.
\begin{enumerate} [{\em (i)}]
\item \label{MTsat} The sets $G_2^{\wtri}$ ($\wtri \in D_G$) are the saturated sets of $G_{\uni}$.
\item \label{MTstrata} The sets $H^{\btri}$ ($\btri \in D_G/G$) are the strata of $G_{\uni}$.
\item \label{MTblade} The sets $X^{\wtri}$ ($\wtri \in D_G$) are the blades of $G_{\uni}$.
\end{enumerate}
Furthermore, if $\tilde{\Delta}_G$ denotes the subset of $Y(G)$
consisting of elements which are in some $\tilde{\Delta}_X$, for a
uniformly unstable set $X$, then $\tilde{\Delta}_G =
\frac{1}{2}\tilde{D}_G$.
\end{theorem}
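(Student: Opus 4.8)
The plan is to reduce everything to the complex group $G'$ via the $\BZ$-scheme machinery of Section~\ref{scheme}, where Kraft's theorem (Theorem~\ref{Kraft}) identifies the optimal virtual one parameter subgroups. First I would establish the final displayed identity $\tilde{\Delta}_G = \frac{1}{2}\tilde{D}_G$, since the three enumerated claims follow from it together with the general theory of Section~\ref{KRo}. For this, fix a maximal torus $T$ defined over $\BZ$ and work inside $Y(T)$. By Theorem~\ref{KRthm}(iii) and Lemma~\ref{opti}(iv), every element of $\tilde{\Delta}_G$ is $G$-conjugate into $Y_\BQ(T)$, and by Lemma~\ref{m(u)} and Theorem~\ref{KNuni} the condition of being optimal for a uniformly unstable set is controlled by the $L^\perp(\lambda)$-semistability of the leading component in $V_k(\lambda)$. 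The key point is that by Proposition~\ref{Zscheme} the group $L^\perp(\lambda)$ arises from a $\BZ$-group scheme, and by Remark~\ref{overZ} the $L(\lambda)$-module $V_k(\lambda)$ is isomorphic, over $\BZ$, to the linear piece $\gl(\lambda, k)$; hence Seshadri's theorem (Theorem~\ref{Seshadri}) applies and tells us that semistability is detected by a $\BZ$-defined open subscheme, independent of characteristic.

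Next I would run the comparison argument. Given $u \in G_{\uni}$ with optimal primitive $\lambda = \lambda_T(u) \in Y(T)$ and $k = m(u,\lambda)$, pick the corresponding $u' \in G'_{\uni}$ (via a Springer morphism, or more directly by comparing the $\BZ$-structures on the nilpotent/unipotent varieties). The characteristic-independence from Seshadri shows that $\lambda$ is optimal for $u$ in $G_{\uni}$ if and only if the analogously defined element of $Y(T') = Y(T)$ is optimal for the matching element of $G'_{\uni}$. On the complex side, Kraft's Theorem~\ref{Kraft}(ii) says that if $\nu \in \Hom(\SL_2(\C), G')$ has $\nu\!\left[\begin{smallmatrix}1&1\\&1\end{smallmatrix}\right] = u'$ then $\frac{1}{2}\nu_* \in \tilde{\Delta}_{u'}$, i.e. $\nu_*$ (suitably primitivised and rescaled) is the optimal one parameter subgroup, and $\nu_* \in \tilde{D}_{G'}$ by the very definition (\ref{SL2factor}). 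Pulling this back through the $\BZ$-scheme identifications $Y(G)/G \cong Y(G')/G'$ and recalling the definition of $\tilde{D}_G$ from Subsection~\ref{Gn}, we get that for each uniformly unstable $X$ the set $\tilde{\Delta}_X$ consists of elements whose $G$-orbit, after multiplication by $m(X,\lambda)$, lands in $\tilde{D}_G$; since $\tilde{\Delta}_X = \frac{1}{m(X,\lambda)}\Delta_X$ (Remark~\ref{Hessopt}) and the normalisation in (\ref{SL2factor}) contributes exactly the factor $\frac{1}{2}$, this yields $\tilde{\Delta}_G = \frac{1}{2}\tilde{D}_G$.

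Once this is in hand, parts (\ref{MTsat})--(\ref{MTblade}) follow formally. For (\ref{MTsat}): a saturated set of $G_{\uni}$ is determined by its $\tilde{\Delta}$, which by the above is $\frac{1}{2}\omega$ for a unique $\omega \in \tilde{D}_G$ up to conjugacy; unwinding the definition $S(X) = V(\Delta_X)_k$ and comparing with the description $G_i^{\wtri} = \langle U_\alpha \mid \langle \alpha,\omega\rangle \ge i\rangle$ from Subsection~\ref{Gn} identifies $S(X)$ with $G_2^{\wtri}$ (the level $i=2$ enters because optimal for the saturation means $m = \|\mu\|^2$ and the $\SL_2$-normalisation forces the relevant threshold). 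For (\ref{MTstrata}): Proposition~\ref{stratasetminus} gives $G[v] = GS(v) \setminus \bigcup GS(v')$ over proper $GS(v') \subsetneq GS(v)$, and by (\ref{MTsat}) these are exactly the sets $\tilde{H}^{\btri} \setminus \bigcup \tilde{H}^{\btri'}$, i.e. the $H^{\btri}$. For (\ref{MTblade}): by Lemma~\ref{bladestrataprop}(i) a blade is $[v] = \{x \in S(v) : \|x\| = \|v\|\} = G_2^{\wtri} \cap H^{\btri} = X^{\wtri}$.

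The main obstacle I anticipate is the bookkeeping at the point where the $\BZ$-scheme identifications are invoked to transport optimality between $G$ and $G'$: one must check that the maximal torus, the one parameter subgroup $\lambda$, the subgroup $L^\perp(\lambda)$, the graded piece $V_k(\lambda)$, and the $L^\perp(\lambda)$-action on it are all simultaneously defined over $\BZ$ and matched up consistently, so that Seshadri's ``$\mathfrak{X}^{ss}$ is $\BZ$-defined'' statement can be applied to a single scheme that specialises correctly in every characteristic. This requires care with Remark~\ref{overZ} (that $\eta_k$ is defined over $\BZ$) and with the fact that the norm $\|\cdot\|$, hence the notion of optimality, is characteristic-independent for $G_{\uni}$ by \cite[Theorem 7.2]{Hess}. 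The other delicate point is verifying that the element of $G'_{\uni}$ one compares to really does lie in the image of an $\SL_2(\C)$-homomorphism with the prescribed weighted diagram, so that Kraft's theorem produces precisely an element of $\tilde{D}_{G'}$ and not merely of $Y(G')$; this is where the hypothesis that $\tilde{D}_{G'}$ parametrises all unipotent classes (via Jacobson--Morozov and Kostant, Subsection~\ref{Cparam}) is essential.
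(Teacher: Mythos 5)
Your proposal uses the same machinery as the paper (the $\BZ$-scheme upgrade of $L^{\perp}(\lambda)$ via Proposition~\ref{Zscheme}, the identification $V_k(\lambda)\cong\gl(\lambda,k)$ over $\BZ$ from Remark~\ref{overZ}, the modified Kirwan--Ness Theorem~\ref{KNuni}, Seshadri's Theorem~\ref{Seshadri}, and Kraft's Theorem~\ref{Kraft}), and your deductions of~(\ref{MTstrata}) and~(\ref{MTblade}) from~(\ref{MTsat}) via Proposition~\ref{stratasetminus} and Lemma~\ref{bladestrataprop} are exactly right. The weakness is in the central comparison step.

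You write ``pick the corresponding $u'\in G'_{\uni}$ (via a Springer morphism, or more directly by comparing the $\BZ$-structures)'' and then claim that Seshadri shows ``$\lambda$ is optimal for $u$ \dots\ if and only if \dots\ optimal for the matching element of $G'_{\uni}$.'' This misstates what Seshadri's theorem delivers. There is no pointwise correspondence between individual elements of $G_{\uni}$ over $\k$ and of $G'_{\uni}$ over $\C$; such a correspondence cannot exist in bad characteristic (indeed the number of classes can differ). What Theorem~\ref{Seshadri} gives is only that the \emph{non-emptiness} of the open subscheme $\mathcal{V}_k(\lambda)_{ss}$ transfers between algebraically closed fields. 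The paper's argument never produces a ``matching'' $u'$: starting from $u$ over $\k$ with $\lambda\in\Delta_u$, it concludes that $\mathcal{V}_k(\lambda)_{ss}(\C)\ne\emptyset$, picks \emph{any} point $e'\in\gl'(\lambda,k)_{ss}$, and works with that $e'$ via Jacobson--Morozov and Kraft(i). You would need to replace your ``corresponding $u'$'' by such an arbitrarily chosen Seshadri point (lifted to $U_k(\lambda)\subset G'$ if you insist on applying Kraft(ii) rather than Kraft(i)); as phrased, the ``if and only if'' assertion is not a consequence of anything you have cited.

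Two further gaps in the same step. First, once you have the Seshadri point over $\C$ and the adapted $\SL_2$-cocharacter $\lambda'_*$ from Kraft, it is not automatic that $\lambda'_*$ agrees up to conjugation and scaling with your original $\lambda$: the paper needs a genuine argument here, moving $\lambda$ by an element of $R_u(P(e'))$, invoking Lemma~\ref{tsu} (Tsujii's ``leading-term'' lemma) to preserve optimality under the perturbation $e'\mapsto(\Ad g)e'\in e'+\sum_{i>k}\gl'(\lambda,i)$, and then using the uniqueness in Theorem~\ref{KRthm}(iii) to identify the two in $Y(T)$. Your sketch silently assumes this identification. Second, you only argue one inclusion of $\tilde{\Delta}_G=\frac{1}{2}\tilde{D}_G$ (that every optimal primitive cocharacter lands in $\tilde{D}_G$). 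The reverse inclusion --- that every $\omega\in\tilde{D}_G$ is such that $\frac{1}{2}\omega$ is actually optimal for \emph{some} unipotent over $\k$, i.e.\ that every $G_2^{\wtri}$ really is saturated --- requires a separate application of Seshadri's theorem in the opposite direction: start from $\tilde\omega\in\Hom(\SL_2(\C),G')$, form $u'=\tilde\omega\bigl[\begin{smallmatrix}1&1\\&1\end{smallmatrix}\bigr]$, apply Kraft(ii) and Theorem~\ref{KNuni} over $\C$ to see $\bar{u}'$ is semistable, then use Seshadri to get a semistable $\bar{u}$ over $\k$ and lift it. Your proposal has the right ingredients but does not run this second direction.
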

\begin{proof} Let $\wtri \in D_{G}$, and assume that $\mu \in Y(G)$ is associated to
$\wtri$ under the natural map described in Subsection~\ref{Gn}.
Assume that $\omega \in Y(G')$ comes from the same $\BZ$-scheme
theoretic one parameter subgroup of $\mathfrak{T}$ as $\mu$. (Then
$G\mu$ is identified with $G'\omega$ under the canonical bijection
$Y(G)/G \,\leftrightarrow\, Y(G')/G'$.) So there exists
$\tilde{\omega} \in \Hom(\SL_2(\C), G')$ such that
$\tilde{\omega}_*=
 \omega$, as in (\ref{SL2factor}). Let $u' =
\tilde{\omega}\left[
\begin{smallmatrix}
1 & 1 \\
  & 1 \end{smallmatrix} \right] \in G'$.
Then $\frac{1}{2}\omega \in \tilde{\Delta}_{u'}$ by Theorem
\ref{Kraft}(ii).

Recall that $U(\omega)$ is the unipotent radical of $(G')_0^\wtri = L(\omega)$
and let $U_k(\omega)$ have the same meaning as in
Subsection~\ref{KNess}. Let $\bar{u}'$ denote the image of $u'$ in
 $V_2(\omega):=\,U_2(\omega)/U_3(\omega)$.  Recall that
 $V_2(\omega)\,\cong\,\gl'(\omega,2)$ as
 $L^\perp(\omega)$-modules; see Remark~\ref{overZ}.
 By Theorem \ref{KNuni} the vector $\bar{u}'$ is $L^{\perp}(\omega)$-semistable.
 Since $V_2(\omega)\cong \gl'(\omega,2)$ and the action on it by $L^{\perp}(\omega)$ are
 defined over $\Z$ there exists an affine scheme
 $\mathcal{V}_2(\omega)_{ss}$, acted on by $\mathfrak{L}^{\perp}$, such that
 $\mathcal{V}_2(\omega)_{ss}(\C)\, = \,V_2'(\omega)_{ss}$.
(One should keep in mind here that
$L^{\perp}(\omega)=\,\mathfrak{L}^\perp(\k)$
 thanks to Proposition~\ref{Zscheme}.)
 Since
 $\bar{u}' \in V_2(\omega)$, applying Theorem~\ref{Seshadri} shows that
 $\mathcal{V}_2(\omega)_{ss}$
 has content over any algebraically closed field. So over $\k$,
 there exists $\bar{u} \in V_2(\mu)\,\cong\,\gl(\mu,2)$ which is
 $L^{\perp}(\mu)$-semistable. Let $u$ be a preimage of $\bar{u}$ in
 $ U_2(\mu)$.
 By applying Theorem~\ref{KNuni} again we see that $\mu$ is optimal
 for $u$. Also, since $\frac{1}{2}\omega \in \tilde{\Delta}_{u'}$,
 we see that $\frac{1}{2}\mu \in \tilde{\Delta}_{u}$. Hence $G^{\wtri}_2\,
 =\,U_2(\mu)$ is a saturated set.

Conversely, suppose that $S$ is a non-trivial saturated set in
$G_{\uni}$. We may assume that $S= S(u)$ for some unipotent element
$u\ne 1_G$; see Lemma~\ref{saturated}(\ref{satii}), for example. Let
$\lambda \in \Delta_u$ and $k=m(\lambda,u)$. Then $S =
U_k(\lambda)$. Replacing $u$ by a $G$-conjugate we may assume
further that $\lambda\in Y(T)$. As before, we identify $Y(T)$ and
$Y(T')$. Let $\bar{u}$ denote the image of $u$ in
$V_k(\lambda)=\,U_k(\lambda)/U_{k+1}(\lambda)$.  Theorem \ref{KNuni}
then implies that $\bar{u} \in V_k(\lambda)$ is
$L^{\perp}(\lambda)$-semistable. Since
$V_k(\lambda)\cong\,\gl(\lambda,k)$ as $L^\perp(\lambda)$-modules by
Remark~\ref{overZ}, we may again obtain an affine scheme
$\mathcal{V}_k(\lambda)_{ss}$, defined over $\BZ$ and acted on by
$\mathfrak{L}^{\perp}$, such that
$\mathcal{V}_k(\lambda)_{ss}(\k) = \,V_k(\lambda)$. Applying
Theorem~\ref{Seshadri} we again see that
$\mathcal{V}_k(\lambda)_{ss}$ has content over any algebraically
closed field, and may therefore find $e'
\in\gl'(\lambda,k)_{ss}\cong\, \mathcal{V}_k(\lambda)_{ss}(\C)$; see
Remark~\ref{overZ}.

By applying Theorem \ref{KNuni} we see that $\lambda$
is optimal and primitive for $e'$. Since we are now in
characteristic zero, the Jacobson--Morozov theorem yields that there
exist $f', h' \in \gl'$ such that $(e',h',f')$ is an
$\s\l_2$-triple. Now let $\lambda' \in \Hom(\SL_2(\C),G')$ be such
that $\lambda'_* \in Y(G')$ is adapted to $e'$, so that
$e'\in\gl'(\lambda'_*,2)$. Applying Theorem \ref{Kraft} we see that
$\frac{1}{2}\lambda'_* \in \tilde{\Delta}_{e'}$. Hence
$P(\frac{1}{2}\lambda'_*)\, =\, P(\lambda) \,=\, P(e')$. Since all
maximal tori in $P(e')=\,L(\lambda)\cdot R_u(P(e'))$ are conjugate
we can find $g \in R_u(P(e'))$ such that $\im(\lambda'_*)$ and
$g (\im\lambda) g^{-1}$ lie in the same maximal torus, $T$
say. Note that $g\cdot\lambda$ is optimal for $(\Ad g)\, e'\in
e'+\sum_{i>k}\gl'(\lambda,i)$. Applying Lemma~\ref{tsu} we see that
$g\cdot\lambda$ is optimal for $e'$ as well. Then $g\cdot\lambda
\in\BQ^\times\lambda'_*$ by Theorem \ref{KRthm}(iii). It is well-known
that $\lambda'_* \in \tilde{D}_{G'}$ (see, e.g.,
\cite[Proposition 5.5.6]{Car2}), hence $g^{-1}\cdot\lambda'_* \in
\tilde{D}_{G'}$. But $g^{-1}\cdot\lambda'_* = \lambda$ if
$\lambda'_*$ is primitive and $g^{-1}\cdot\lambda'_* = 2\lambda$
otherwise. So we conclude that $\frac{2}{k}\lambda \in
\tilde{D}_{G'}$ in all cases.  Then, associating a suitable $\wtri
\in D_G$ to $\frac{2}{k}\lambda$, we have that $S =
\,U_2(\frac{2}{k}\lambda) =\, G^{\wtri}_2$. This completes the proof
of (\ref{MTsat}). The claim that $\tilde{\Delta}_G =
\,\frac{1}{2}\tilde{D}_G$ also easily follows from these arguments.
Part~(\ref{MTstrata}) now follows from (\ref{MTsat}) and Proposition
\ref{stratasetminus}. Part~(\ref{MTblade}) then follows from
(\ref{MTsat}) and (\ref{MTstrata}). \end{proof}

\subsection{} We are now in a position to prove one of our main results.

\begin{theorem}

Properties $\gP_1$--\,\,$\gP_4$ hold for any connected reductive
group over any algebraically closed field. Moreover,  $C_G(u)\subset
G_\wtri^0$ for any $u\in X^\wtri$.\end{theorem}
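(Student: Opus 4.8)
The plan is to translate $\gP_1$--\,\,$\gP_4$ and the centraliser assertion into statements about Hesselink's stratification and then invoke the structural results of Sections~\ref{KRo} and \ref{KirNess}. The essential input is Theorem~\ref{uni-pieces}, which identifies $G_2^\wtri$, $H^\btri$ and $X^\wtri$ with the saturated sets, the strata and the blades of $G_{\uni}=\N_G$. Granting this, $\gP_1$ and $\gP_3$ are essentially immediate: the blades are the $\approx$-classes of $G_{\uni}$, so by Theorem~\ref{uni-pieces}(\ref{MTblade}) the sets $X^\wtri$ ($\wtri\in D_G$) partition $G_{\uni}$, which is $\gP_1$; and the strata are the $\sim$-classes, so by Theorem~\ref{uni-pieces}(\ref{MTstrata}) the sets $H^\btri$ partition $G_{\uni}$, with $D_G/G\cong\tilde{D}_G/G$ finite, each $H^\btri$ being $G$-stable and locally closed (it is open and dense in the closed irreducible variety $\tilde{H}^\btri$ by Lemma~\ref{locally} and Lemma~\ref{bladestrataprop}(\ref{bsiii}),(\ref{bsv})); this is $\gP_3$.

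For $\gP_2$ I would fix $\btri\in D_G/G$ and note that the stratum $H^\btri=G[v]$ is a union of blades, hence the disjoint union of the blades it contains, and that these are precisely the $X^\wtri$ with $\wtri\in\btri$: for $\wtri=g\cdot\wtri_0$ one has $G_2^\wtri=g\cdot G_2^{\wtri_0}$, hence $X^\wtri=g\cdot X^{\wtri_0}\subseteq G\cdot X^{\wtri_0}=H^\btri$ by Lemma~\ref{saturated}(\ref{satv}), while conversely every blade in $H^\btri$ is a $G$-translate of $X^{\wtri_0}$, and distinct $\wtri\in\btri$ give distinct blades because the set-stabiliser of $X^{\wtri_0}$ equals $G_0^{\wtri_0}$ (by Theorem~\ref{KRthm}(iv) together with the fact that $G_0^\wtri$ normalises each $G_i^\wtri$ and stabilises $H^\btri$). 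The centraliser assertion is similar: for $u\in X^\wtri$ with $u\ne1_G$, pick $\lambda\in\Delta_u$; then $C_G(u)$ is exactly the stabiliser of the point $u$ under conjugation, so by Theorem~\ref{KRthm}(iv) it is contained in $P(\{u\})=P(\lambda)=P(\omega)=G_0^\wtri$, where $\omega\in\tilde{D}_G$ is the cocharacter attached to $\wtri$, which is a positive rational multiple of $\lambda$ since $\tfrac12\omega\in\tilde{\Delta}_u=\tfrac1{m(u,\lambda)}\Delta_u$ by Theorem~\ref{uni-pieces}. For $u=1_G$ this is trivial, as $G_0^\wtri=G$.

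The remaining, and main, point is $\gP_4$. Fix $u\in X^\wtri$ and choose $\lambda\in\Delta_u$ with $\omega=\tfrac2k\lambda$, where $\omega\in\tilde{D}_G$ is the cocharacter attached to $\wtri$ and $k=m(u,\lambda)$ (such $\lambda$ exists since $\tfrac12\omega\in\tilde{\Delta}_u$). Then $G_2^\wtri=S(u)=U_k(\lambda)$, while $G_3^\wtri=\langle U_\alpha\mid\langle\alpha,\omega\rangle\ge3\rangle=U_{\lceil 3k/2\rceil}(\lambda)\subseteq U_{k+1}(\lambda)$ because $k\ge1$. Now take $g\in G_3^\wtri\subseteq U_{k+1}(\lambda)$. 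Since $U_{k+1}(\lambda)$ is normal in $U_k(\lambda)$, the quotient map $U_k(\lambda)\twoheadrightarrow V_k(\lambda)=U_k(\lambda)/U_{k+1}(\lambda)$ is a group homomorphism, so $gu$ and $ug$ have the same image $\bar u$ in $V_k(\lambda)$ as $u$; since $\bar u\ne0$, Lemma~\ref{m(u)} gives $m(gu,\lambda)=m(ug,\lambda)=k$, and in particular $gu,ug\in U_k(\lambda)=G_2^\wtri$. By Theorem~\ref{KNuni}, $\bar u$ is $L^\perp(\lambda)$-semistable because $\lambda$ is optimal for $u$ with $m(u,\lambda)=k$; applying the converse direction of Theorem~\ref{KNuni} to $gu$ and to $ug$, whose images in $V_k(\lambda)$ are again $\bar u$, shows $\lambda$ is optimal for each of them. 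Since $\lambda$ is primitive, $\lambda\in\Delta_{gu}\cap\Delta_{ug}$, so $P(gu)=P(\lambda)=P(ug)$, whence $\Delta_{gu}=P(gu)\cdot\lambda=\Delta_u=\Delta_{ug}$ and $m(gu,\lambda)=k=m(ug,\lambda)$; therefore $\tilde{\Delta}_{gu}=\tilde{\Delta}_u=\tilde{\Delta}_{ug}$, i.e. $gu$ and $ug$ lie in the blade $[u]=X^\wtri$. Hence $G_3^\wtri X^\wtri\subseteq X^\wtri$ and $X^\wtri G_3^\wtri\subseteq X^\wtri$, and the reverse inclusions follow from $1_G\in G_3^\wtri$; this is $\gP_4$.

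I expect the only real difficulty to be the bookkeeping that pins down, for $u\in X^\wtri$, the relation $\omega=\tfrac2k\lambda$ between the cocharacter $\omega$ attached to $\wtri$ and a primitive optimal cocharacter $\lambda\in\Delta_u$ (with $k=m(u,\lambda)$, and noting that $\omega$ itself need not be primitive), which must be read off carefully from the proof of Theorem~\ref{uni-pieces}. Once that is in hand, the inclusion $G_3^\wtri\subseteq U_{k+1}(\lambda)$ and the rest of the argument are formal.
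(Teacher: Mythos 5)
Your proof is correct and takes essentially the same route as the paper's: derive $\gP_1$, $\gP_3$ from the identification of blades and strata in Theorem~\ref{uni-pieces}; $\gP_2$ from $H^\btri=\bigsqcup_{\wtri\in\btri}X^\wtri$; $\gP_4$ from the Kirwan--Ness criterion (Theorem~\ref{KNuni}); and the centraliser containment from Theorem~\ref{KRthm}(iv). The one cosmetic difference is in $\gP_4$: the paper says "by the commutator relations $u_k$ is also the minimal component of $gu$," whereas you package the same fact by observing that $g\in G_3^\wtri\subseteq U_{k+1}(\lambda)$ lies in the kernel of the group homomorphism $U_k(\lambda)\twoheadrightarrow V_k(\lambda)$, so $gu$ and $ug$ map to $\bar u$; your bookkeeping $\omega=\tfrac{2}{k}\lambda$ and $G_3^\wtri=U_{\lceil 3k/2\rceil}(\lambda)\subseteq U_{k+1}(\lambda)$ is exactly what is implicitly being used (and in fact $k\in\{1,2\}$ by Kraft's theorem, which makes the ceiling computation even more transparent). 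You also conclude $gu\in[u]=X^\wtri$ directly via $\tilde\Delta_{gu}=\tilde\Delta_u$, while the paper passes through $\|gu\|=\|u\|$ and $gu\in G_2^\wtri\cap H^\btri$; these are interchangeable.
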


\begin{proof} Properties $\gP_1$ and $\gP_3$ are immediate by Theorem \ref{main}
since the blades and strata are equivalence classes on $G_{\uni}$.
That the sets $X^{\wtri}$ ($\wtri \in \btri$) form a partition of
$H^{\btri}$ for any $\btri \in D_G/G$ is also clear since $H^{\btri}
=\, \bigsqcup_{\wtri \in \btri} X^{\wtri}$. Let $g \in G^{\wtri}_3$
and $u \in X^{\wtri}$. Clearly $gu \in G^{\wtri}_2$. Let $\lambda
\in \Delta_u$ and let $u_k$ be the minimal component of $u$ with
respect to $\lambda$. By the commutator relations $u_k$ is also the
minimal component of $gu$ with respect to $\lambda$. By Theorem
\ref{KNuni} we see that $\Delta_u = \Delta_{gu}$. Now $\| u \|$, $\|
gu \|$ are determined by the minimal component with respect to (any)
optimal one parameter subgroup. Hence, $\| u \| = \| gu \|$ by Lemma
\ref{saturated}(\ref{satii}), and so $gu \in H^{\btri}$ by
Proposition \ref{bladestrataprop}(\ref{bsiv}) and Theorem
\ref{main}. Hence $G^{\wtri}_3X^{\wtri} = X^{\wtri}$. Similarly
$X^{\wtri}G^{\wtri}_3 = X^{\wtri}$, and so $\gP_4$ holds for $G$.
Since the parabolic subgroup $G_\wtri^0=P(\lambda)$ is optimal for
$u$, Theorem~\ref{KRthm}(iv) implies that $C_G(u)\subset G_\wtri^0$.
\end{proof}

\section{Admissible modules and the Hesselink
stratification}\label{admiss}
\subsection{} \label{modulestrata} Previously we did
not restrict $\chara \k$ but for this section and the next it will
be convenient to assume that $\chara \k = p > 0$. As in
Subsection~\ref{poly} we denote by $\mathfrak{G}$ a reductive
$\BZ$-group scheme split over $\BZ$ and write
$G'=\mathfrak{G}(\mathbb{C})$ and $G=\mathfrak{G}(\k)$. Then $G'$
and $G$ are connected reductive groups over $\mathbb{C}$ and $\k$
respectively. Let $V'$ be a finite-dimensional rational $G'$-module.
Given an admissible lattice $V'_\BZ$ in $V'$ we set $V:= V'_{\BZ}
\otimes_\BZ \k$. We call $V$ an {\it admissible $G$-module}. Since
the lattice $V'_\BZ$ is stable under the action of the
distribution $\BZ$-algebra ${\rm Dist}(\mathfrak{G})$, the
$\k$-vector space $V$ is a module over ${\rm Dist}(G)={\rm
Dist}(\mathfrak{G})\otimes_\BZ\k$. This gives $V$ a rational
$G$-module structure; see \cite[\S II.1]{Jan} for more details.

Let $\mathfrak{T}$ be a toral group subscheme of $\mathfrak{G}$ such
that $T':=\mathfrak{T}(\C)$ is a maximal torus of $G'$ and
$T:=\mathfrak{T}(\k)$ is a maximal torus of $G$. We may and will
identify the groups of rational characters $X(T')$ and $X(T)$ and
their duals $Y(T')$ and $Y(T)$. The lattice $V'_\BZ$ decomposes over
$\BZ$ into a direct sum $V'_\Z=\bigoplus_{\mu\in
X(T)}\,V'_{\BZ,\mu}$ of common eigenspaces for the action of
distribution algebra ${\rm Dist}(\mathfrak{T})\subset {\rm
Dist}(\mathfrak{G})$ and base-changing this direct sum decomposition
we obtain the weight space decompositions $V'=\bigoplus_{\mu\in
X(T)} V'_\mu$ and $V=\bigoplus_{\mu\in X(T)}\,V_\mu$ of $V'$ and $V$
with respect to $T'$ and $T$ respectively; see \cite[II1.1(2)]{Jan}.
We mention for completeness that $\dim_{\mathbb C} V'_\mu=\dim_\k
V_\mu$ for all $\mu\in X(T)$.

\begin{theorem}\label{hesselinkstrata}

The following are true.

\begin{enumerate}  [{\em (i)}]
\item \label{mod1} Let $\mathcal{S}'$ and $\mathcal{S}$ denote the
collections of saturated sets of $V'$ and $V$ associated with the
one parameter subgroups in $Y(T')$ and $Y(T)$ respectively. There
exists a collection $\mathfrak S$ of
$\mathrm{Dist}(\mathfrak{T})$-stable direct summands of $V'_{\Z}$
such that
$$\mathcal{S}'=\{S\otimes_\BZ\mathbb{C}\,|\,\,S\in\mathfrak{S}\}\,\mbox{
and } \,\mathcal{S}=\{S\otimes_\BZ \k\,|\,\,S\in\mathfrak{S}\}.$$

\item \label{mod1'} For every $S\in\mathfrak{S}$ we have that
$\Delta(S\otimes_\BZ\mathbb{C})\cap Y_{\mathbb
Q}(T')=\,\Delta(S\otimes_\BZ\k)\cap Y_{\mathbb Q}(T)$.

\item \label{mod2} The strata of $V$ are parametrised by those of $V'$.

\item \label{mod3} The parametrisation from (iii) respects the dimensions of the strata.
In particular, the dimensions of the nullcones of $V'$ and $V$
agree.
\end{enumerate}
\end{theorem}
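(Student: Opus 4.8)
The plan is to describe the saturated sets associated with $Y(T)$, and likewise those associated with $Y(T')$, as explicit sums of weight spaces indexed by combinatorial data that is literally the same for $V$ and $V'$, except for one Boolean---whether a certain linear $L^{\perp}(\lambda)$-module contains a semistable vector---which is transferred between characteristics by Seshadri's theorem, in the spirit of the proof of Theorem~\ref{uni-pieces}. For~(i), the first step is to check (using Subsection~\ref{HessIsom} and Lemma~\ref{saturated}) that the saturated sets $X$ of $V$ with $\Delta_X\cap Y(T)\ne\emptyset$ are precisely the sets $V(\lambda)_k=\bigoplus_{\langle\chi,\lambda\rangle\ge k}V_\chi$ with $\lambda\in Y(T)$ primitive, $k\ge 1$, $V(\lambda,k)\ne 0$ and $\lambda$ optimal for $V(\lambda)_k$ (and then $\lambda=\lambda_T(X)$ and $k=m(X,\lambda)$ are determined by $X$). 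Since $V_\chi=V'_{\BZ,\chi}\otimes_\BZ\k$ and $V'_\chi=V'_{\BZ,\chi}\otimes_\BZ\C$, such a set comes from the $\mathrm{Dist}(\mathfrak{T})$-stable direct summand $S_{\lambda,k}:=\bigoplus_{\langle\chi,\lambda\rangle\ge k}V'_{\BZ,\chi}$ of $V'_\BZ$, and I would take $\mathfrak{S}$ to be the collection of all $S_{\lambda,k}$ for which $\lambda$ is optimal for $V(\lambda)_k$; then~(i) amounts to showing that this optimality condition does not depend on the ground field.

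The heart of the matter is the claim that, for fixed primitive $\lambda\in Y(T)$ and $k\ge 1$ with $V(\lambda,k)\ne 0$, the one-parameter subgroup $\lambda$ is optimal for $V(\lambda)_k$ if and only if the linear $L^{\perp}(\lambda)$-module $V(\lambda,k)=\bigoplus_{\langle\chi,\lambda\rangle=k}V_\chi$ contains an $L^{\perp}(\lambda)$-semistable vector. If $\lambda$ is optimal then $V(\lambda)_k$ is saturated, its blade is dense in it, and a blade element $v$ has $\lambda\in\Delta_v$, so $v=v_k\in V(\lambda,k)$ is $L^{\perp}(\lambda)$-semistable by Theorem~\ref{Tsujii}; conversely, an $L^{\perp}(\lambda)$-semistable $v_k\in V(\lambda,k)$ is optimal for $\lambda$ by Theorem~\ref{Tsujii}, so $\lambda$ is optimal for the dense subset $\{v\in V(\lambda)_k: v_k\in V(\lambda,k)_{ss}\}$, and since $m(V(\lambda)_k,\mu)\le m(v,\mu)$ for all $\mu$ it is optimal for $V(\lambda)_k$ itself. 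Now $V(\lambda,k)$ comes from the $\mathrm{Dist}(\mathfrak{T})$-stable summand $\bigoplus_{\langle\chi,\lambda\rangle=k}V'_{\BZ,\chi}$, which is stable under $L(\lambda)=C_G(\im\lambda)$ and hence under $L^{\perp}(\lambda)=\mathfrak{L}^{\perp}(\k)$, where $\mathfrak{L}^{\perp}$ is the reductive $\BZ$-group scheme of Proposition~\ref{Zscheme}; all of this is defined over $\BZ$, so Theorem~\ref{Seshadri} applies to this linear action and lets one conclude that $V(\lambda,k)$ has a semistable vector if and only if the corresponding $\C$-module $V'(\lambda,k)$ does. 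This makes the defining condition for $\mathfrak{S}$ field-independent and proves~(i).

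Part~(ii) is then immediate: for $S_{\lambda,k}\in\mathfrak{S}$ the set $S_{\lambda,k}\otimes_\BZ\k=V(\lambda)_k$ is saturated with $\lambda\in\Delta_{V(\lambda)_k}\cap Y(T)$ primitive and $m(V(\lambda)_k,\lambda)=k$, so by Remark~\ref{Hessopt} and Lemma~\ref{opti}(iv) the unique element of $\tilde\Delta_{V(\lambda)_k}\cap Y_\BQ(T)$ is $\tfrac1k\lambda$, and the identical computation over $\C$ gives the same answer. For~(iii), Theorem~\ref{KRthm} shows that every saturated set of $V$ is $G$-conjugate to one associated with $Y(T)$, and two such are $G$-conjugate if and only if they are $W$-conjugate: if $gX_1=X_2$ then $gP(X_1)g^{-1}=P(X_2)$, and since $T$ and $gTg^{-1}$ are maximal tori of $P(X_2)$, which stabilises $X_2$ by Lemma~\ref{basiclem}, we may replace $g$ by an element of $N_G(T)$. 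Moreover Proposition~\ref{stratasetminus} together with Lemma~\ref{saturated}(i),(v) gives $x\sim y$ if and only if $S(x)$ and $S(y)$ lie in the same $G$-orbit, so the strata of $V$ correspond to the $W$-orbits of saturated sets associated with $Y(T)$. As $W$ preserves $\langle\ ,\ \rangle$ and permutes the $V_\chi$ and the $V'_\chi$ compatibly, the parametrisation of these sets by the pairs $(\lambda,k)$ of the first paragraph is $W$-equivariant, and~(i) delivers the required bijection between the strata of $V$ and those of $V'$.

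For~(iv), write $X=S(v)=V(\lambda)_k$ with $\lambda\in Y(T)$. By Lemma~\ref{bladestrataprop}(v) the stratum $G[v]$ is open dense in $GS(v)$; $X$ is a $P(\lambda)$-submodule of $V$ by Lemma~\ref{basiclem}, its blade $[v]$ is $P(\lambda)$-stable and dense in $X$, and the natural morphism $G\times^{P(\lambda)}[v]\to G[v]$ is bijective (if $g_1x_1=g_2x_2$ with $x_1,x_2\in[v]$, then $P(x_1)=P(x_2)=P((g_2^{-1}g_1)x_1)=(g_2^{-1}g_1)P(x_1)(g_2^{-1}g_1)^{-1}$ by Theorem~\ref{KRthm}(iv), so $g_2^{-1}g_1\in N_G(P(\lambda))=P(\lambda)$), whence
\[ \dim G[v]=\dim G-\dim P(\lambda)+\dim V(\lambda)_k. \]
Each term on the right is the same for $V$ and $V'$: $\dim G=\dim G'$ and $\dim P(\lambda)$ depend only on the root datum, while $\dim V(\lambda)_k=\sum_{\langle\chi,\lambda\rangle\ge k}\dim_\k V_\chi=\sum_{\langle\chi,\lambda\rangle\ge k}\dim_\C V'_\chi=\dim V'(\lambda)_k$. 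Since the strata are finite in number and partition $\N_V$, we have $\dim\N_V=\max_{\text{strata}}\dim(\text{stratum})$, and with~(iii) this yields $\dim\N_V=\dim\N_{V'}$. I expect the main obstacle to be the equivalence of the second paragraph between optimality of $\lambda$ for $V(\lambda)_k$ and the existence of an $L^{\perp}(\lambda)$-semistable vector in $V(\lambda,k)$; this is exactly what reduces the whole comparison to Seshadri's theorem, and it rests on the generalised Kirwan--Ness theorem together with the verification that $\mathfrak{L}^{\perp}$ and its action on $V(\lambda,k)$ are defined over $\BZ$, which in turn uses Proposition~\ref{Zscheme} and the $\mathrm{Dist}(\mathfrak{G})$-stability of the admissible lattice $V'_\BZ$.
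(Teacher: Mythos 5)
Your proposal is correct and takes essentially the same route as the paper: parametrise the $T$-associated saturated sets by pairs $(\lambda,k)$, reduce the field-dependence to the existence of an $L^\perp(\lambda)$-semistable vector in $V(\lambda,k)$ via the Kirwan--Ness theorem (Theorem~\ref{Tsujii}), and transfer that existence between $\C$ and $\k$ by Seshadri's theorem applied to the $\BZ$-group scheme $\mathfrak{L}^\perp$ of Proposition~\ref{Zscheme}, finishing (iii) by a $W$-orbit argument and (iv) by the fibration formula $\dim G[v]=\dim G-\dim P(\lambda)+\dim S(v)$. The only organizational difference is that you place the Seshadri/Kirwan--Ness argument inside part~(i), where it is needed to justify that the same set of pairs $(\lambda,k)$ yields saturated sets over $\C$ and over $\k$, whereas the paper relegates it to part~(ii) and states (i) more tersely; your version makes the logical dependence more explicit.
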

\begin{proof}
(\ref{mod1}) Let $v' \in V'$ and $v\in V$ be unstable relative
to $T'$ and $T$ respectively. Let $\lambda'$ and $\lambda$ be the
sole elements of $\tilde{\Delta}_{v',T'}$ and
$\tilde{\Delta}_{v,T}$ respectively. Then $S(v') = \bigoplus_{\langle \mu,
\lambda'\rangle \ge 1} V'_{\mu}$ and $S(v) = \bigoplus_{\langle \mu,
\lambda\rangle \ge 1} V_{\mu}.$ As we mentioned earlier, for every
$\mu\in X(T)$ we have that $V_{\mu}'=V_{\mu,
\BZ}\otimes_\BZ\mathbb{C}$ and $V_{\mu}=V_{\mu, \BZ}\otimes_\BZ\k$.
Since the sets of weights of $V'$ and $V$ in $X(T')=X(T)$ coincide,
part~(i) follows.

(\ref{mod1'}) Let $S\in\mathfrak{S}$. Our proof of part~(i) and
Remark~\ref{Hessopt} then show that $S=V'(\lambda)_k\cap V'_\BZ$ for
some $\lambda\in Y(T')=Y(T)$ and some positive integer $k$. Put
$\mathfrak{L}^\perp=\mathfrak{L}^\perp(\lambda)$ and consider the
actions of $\mathfrak{L}^\perp(\mathbb{C})$ and
$\mathfrak{L}^\perp(\k)$ on $V'(k,\lambda)$ and $V(k,\lambda)$
respectively. By Theorem \ref{Seshadri}, there is an open subscheme
$\mathcal{V}(\lambda,k)_{ss}$ of $V_\BZ(\lambda,k) :=
V'(k,\lambda)\cap V'_\BZ$ with the property that
$\mathcal{V}(\lambda,k)_{ss}(\C)$ is the set of
$\mathfrak{L}^{\perp}(\mathbb{C})$-semistable vectors of
$V'(k,\lambda)$ and $\mathcal{V}(\lambda,k)_{ss}(\k)$ is the set of
$\mathfrak{L}^\perp(\k)$-semistable vectors of $V(k,\lambda)$. On
the other hand, Theorem~\ref{Tsujii} tells us that $\lambda$ is
optimal for an element in $V'(\lambda)_k$ (resp. in $V(\lambda)_k$)
if and only if $\mathcal{V}(\lambda,k)_{ss}(\mathbb{C})\ne \emptyset
$ (resp. $\mathcal{V}(\lambda,k)_{ss}(\k)\ne \emptyset$). This
shows that either both sets $\Delta(S\otimes_\BZ\mathbb{C})\cap
Y_{\mathbb Q}(T)$ and $\Delta(S\otimes_\BZ\k)\cap Y_{\mathbb Q}(T)$
are empty or there exists a natural number $m=m(S)$ such that
$$\Delta(S\otimes_\BZ\mathbb{C})\cap Y_{\mathbb
Q}(T)\,=\,\Delta(S\otimes_\BZ\k)\cap Y_{\mathbb
Q}(T)=\frac{1}{m}\lambda.$$ This proves part~(ii).

(\ref{mod2}) Consider a stratum $G'[v]\subset V'$. Without loss of
generality we may assume that the blade $[v]$ is $T'$-unstable,
since all maximal tori are conjugate in $G'$. Then part~(ii) gives
us a blade $[w]\subset V$ corresponding to $[v]$. Since all maximal
tori in $G$ are conjugate as well, part~(ii), in conjunction with our
discussion in Subsection~\ref{strata}, shows that any stratum
$G[w]\subset V$ is obtained by the above construction in a unique
way. Then the map $G'[v] \mapsto G[w]$ defines the required
parametrisation.

(\ref{mod3}) With $[v] \subset V'$ and $[w] \subset V$ as above we
have that $$\dim_\mathbb{C} G'[v]\, = \,\dim_{\mathbb{C}} G' -
\dim_{\mathbb C} P(v) + \dim_{\mathbb C} S(v)$$ and $$\dim_\k G[w]
\,=\, \dim_\k G - \dim_\k P(w) + \dim_\k S(w)$$ by
\cite[Proposition 4.5(c)]{Hess2}. By part~(i) we have that
$\dim_{\mathbb C} S(v) = \dim_\k S(w)$, whilst the equality
$\dim_{\mathbb C} P(v) = \dim_\k P(w)$ follows from the definition
of $P(\lambda)$ in Section \ref{lambdasbgrps}. Hence $\dim_{\mathbb
C} G'[v] = \dim_\k G[w]$, as required.

Since the set of $T'$-weights of $V'$ is finite, so is the set
$\{K_T(v')\,|\,\,v'\in V'\}$. Then Lemma~\ref{opti} implies that the
number of $S\in\mathfrak{S}$ with
$\Delta(S\otimes_\BZ\mathbb{C})\cap Y_{\mathbb Q}(T)\ne \emptyset$
is finite, too. In view of our earlier remarks in this part we now
get $\dim_\mathbb{C}\N_{V'}=\dim_\k\N_V$.
\end{proof}

\begin{rmk} 1. In general, different lattices $V'_{\Z}$ may give rise to
non-isomorphic $G$-modules. On the other hand, the theorem implies
that the stratification does {\em not} depend on the choice of
lattice and remains essentially the same over any algebraically
closed field.

\smallskip

\noindent 2. Let $E(\lambda)$ denote the finite dimensional
irreducible $G$-module with highest weight $\lambda\in X(T)$. Then
it is well-known that $\lambda$ is a dominant weight and there
exists an admissible lattice, $V''_\Z(\lambda)$, in the irreducible
finite dimensional $\gl'$-module $V'(\lambda)$ with highest weight
$\lambda$ such that $E(\lambda)$ is isomorphic to a submodule of the
$G$-module $V''_\k(\lambda):=\,V''_\Z(\lambda)\otimes_\Z\k$; see
\cite[\S12, Exercise after Theorem~39]{St}. If $\nu\in Y(G)$ is
optimal for a non-zero $G$-unstable vector $v\in E(\lambda)$, then
the definition in Subsection~\ref{KRthm} shows that it remains so
for $v$ regarded as a vector of $V''_\k(\lambda)$. Therefore the
Hesselink strata of $E(\lambda)$ are precisely the intersections of
those of $V''_\k(\lambda)$ with $E(\lambda)$. Now
Theorem~\ref{hesselinkstrata}(iii) implies the Hesselink strata of
$E(\lambda)$  are parametrised by a subset of the Hesselink strata
of the $\gl'$-module $V'(\lambda)$.

\end{rmk}

\subsection{}\label{finitefield} In this subsection we assume that $\k$
is an algebraic closure of $\F_p$. Keeping the notation of
Subsection~\ref{rootdatahom} we assume that $(X(T), \Sigma, Y(T),
\Sigma^\vee)$ is the root datum of the reductive group scheme
$\mathfrak{G}$. Let $G=\mathfrak{G}(\k)$ and write $x_\alpha(t)$
for Steinberg's generators of the unipotent root subgroups
$U_\alpha$ of $G$; see \cite{St}. Choose a basis of simple roots $\Pi$
in $\Sigma$ and denote by $Y^+(T)$ the Weyl chamber in $Y(T)$ associated
with $\Pi$. (It consists of all $\mu\in Y(T)$ such that
$\langle\alpha,\mu\rangle\ge 0$ for all $\alpha\in \Pi$.) Let $\tau$
be an automorphism of the lattice $X(T)$ and denote by $\tau^*$ the
natural action of $\tau$ on $Y(T)=\,\Hom_\BZ(X(T),\BZ)$. Assume
further that $\tau$ preserves both $\Sigma$ and $\Pi$ and $\tau^*$
preserves $\Sigma^\vee$. Finally, assume that the quadratic form $q$ from
Subsection~\ref{KRthm} is invariant under $\tau^*$.

Now fix a $p^{th}$ power $q=p^l$. Then it is well-known that $\tau$
gives rise to a Frobenius endomorphism $F=F(\tau, l)\colon\,x\mapsto
x^F$, of the algebraic $\k$-group $G=\mathfrak{G}(\k)$. The
endomorphism $F$ is uniquely determined by the following properties:
\begin{enumerate}
\item[1.] \ $(\tau\eta)(x^F)=\eta(x)^q\ $ for all $\eta\in X(T)$ and $x\in T$;

\smallskip

\item[2.] \ $\lambda(t)^F=(\tau^*\lambda)(t^q)\ $ for all
$\lambda\in Y(T)$ and $t\in\k^\times$;

\smallskip

\item[3.]\ $x_\alpha(t)^F=x_{\tau\alpha}(t^q)\ $ for all $\alpha\in
R$ and $t\in \k$;
\end{enumerate}
see \cite[Theorem~3.17]{DM} for instance.
Let $V$ be an admissible $G$-module endowed with an action of $F$
such that
\begin{equation}\label{frob}
g(v)^F=g^F(v^F)\quad\ \mbox {for all }\,  g\in G\, \mbox { and }\,
v\in V.
\end{equation}
As usual we require that the action of $F$ is {\it $q$-linear}, that
is $(\lambda v)^F =\lambda^qv^F$ for all $\lambda\in \k$ and $v\in
V$, and that each vector in $V$ is fixed by a sufficiently large
power of $F$. In this situation one knows that the fixed point space
$V^F$ is an $\F_q$-form of $V$. In particular,
$\dim_{\F_q}V^F=\,\dim_\k V$; see \cite[Corollary~3.5]{DM}. We
mention, for use later, that there is a natural $q$-linear action of
$F$ on the dual space $V^*$, compatible with that of $G$ (recall
that $G$ acts on $V^*$ via $(g\cdot\xi)(v)=\xi(g^{-1}\cdot v)$ for
all $g\in\ G$, $\xi\in V^*$, $v\in V$). Since $V^F$ is an
$\F_q$-form of $V$, the dual space $(V^F)^*$ contains a $\k$-basis
of $V^*$, say $\xi_1,\ldots, \xi_m$. Then every $\xi\in V^*$ can be
uniquely expressed as a linear combination $\xi=\,\sum_{i=1}^m
\lambda_i\xi_i$ with $\lambda_i\in \k$ and we can define
$F\colon\,V^*\rightarrow V^*$ by setting
$\xi^F:=\,\sum_{i=1}^m\lambda_i^q\xi_i$. Verifying (\ref{frob}) for
this action of $F$ reduces to showing that $g^{-1}g^F(\xi)=\xi$ for
all $\xi\in (V^F)^*$ and $g\in G$, which is clear because
$(g^{-1})^Fg(v)=v$ for all $v\in V^F$.

There are many reasons to be interested in the cardinality of the
finite set ${\N_V}^F=\N_V\cap V^F$, and here we can offer the
following general result.
\begin{theorem}\label{NFq}
Under the above assumptions on $F$ and $V$ there exists a polynomial
$n_V(t)\in \BZ[t]$ such that $|{\N_V}^F|=\,n_V(q)$ for all $q=p^l$.
The polynomial $n_V(t)$ depends only on $V'$ and $\tau$, but not on
the choice of an admissible lattice $V'_\BZ$, and is the same for
all primes $p\in\mathbb{N}$.
\end{theorem}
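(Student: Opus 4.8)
The plan is to compute $|\N_V^F|$ by slicing $\N_V$ into its Hesselink strata, counting $\F_q$-rational points stratum by stratum, and inducting on $\dim G+\dim V$, where $G=\mathfrak G(\k)$.

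I would begin from Theorem~\ref{hesselinkstrata}: the nullcone decomposes as a finite disjoint union $\N_V=\{0\}\sqcup\bigsqcup_{\btri}H^{\btri}$, and the combinatorial data of this decomposition — the index set of strata, the optimal classes, the reductive subgroups $L^{\perp}(\lambda)$, the weight-graded pieces $V(\lambda)_i$ and all their dimensions — depend only on $(\mathfrak G,V')$, not on $p$ and not on the admissible lattice. Next I would observe that $F$ permutes the strata: since $F$ is $q$-linear on $V$ and compatible with the $\tau$-twisted $G$-action via (\ref{frob}), and since $\tau^{*}$ preserves the norm and the Weyl chamber $Y^{+}(T)$, one checks that $F$ preserves $m$-values and norms appropriately, so it carries the blade and optimal class of $v$ to those of $v^{F}$, inducing on the finite set of strata exactly the permutation $\tau^{*}$ induces on their parameters in $Y_{\BQ}(T)$. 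Distinct strata being disjoint, only the $\tau^{*}$-stable strata can meet the fixed-point locus, so $|\N_V^{F}|=1+\sum_{\btri}|(H^{\btri})^{F}|$, the sum running over the finitely many non-trivial $F$-stable strata.

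For such a stratum $H^{\btri}$ I would fix the blade $[v]\subseteq H^{\btri}$ whose primitive optimal one-parameter subgroup $\lambda=\lambda_{T}([v])\in\Delta_{[v]}$ lies in $Y^{+}(T)$; then $\lambda$ is $\tau^{*}$-fixed, so $P(\lambda)$, $L(\lambda)$, $L^{\perp}(\lambda)$, the spaces $V(\lambda)_i$ and $V(\lambda,k)$ (with $k=m(v,\lambda)$), and $[v]$ itself are all $F$-stable, $F$ restricting on each to a Frobenius. The saturated set is $S(v)=V(\lambda)_{k}$, and combining Lemmas~\ref{saturated} and~\ref{bladestrataprop}(i) with Theorem~\ref{Tsujii} and Lemma~\ref{perp} identifies the blade as
\[
[v]\;=\;\bigl(V(\lambda,k)\setminus\N_{L^{\perp}(\lambda),\,V(\lambda,k)}\bigr)\oplus V(\lambda)_{k+1},
\]
because $x\in S(v)$ lies in $[v]$ precisely when its leading $\lambda$-component $x_{k}\in V(\lambda,k)$ is a non-zero $L^{\perp}(\lambda)$-semistable vector. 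Since $U(\lambda)$ acts trivially on $V(\lambda)_{k}/V(\lambda)_{k+1}$, the blade $[v]$ is $P(\lambda)$-stable, and the $G$-equivariant morphism $G\times^{P(\lambda)}[v]\to H^{\btri}$ is bijective: surjectivity is Proposition~\ref{stratasetminus} (so $H^{\btri}=G[v]$) and injectivity follows from Lemma~\ref{saturated} together with Theorem~\ref{KRthm}(iv). As this morphism is defined over the prime field and commutes with $F$, it induces a bijection on $F$-fixed points, and the Lang--Steinberg theorem (every $F$-stable coset of the connected parabolic $P(\lambda)$ has a representative in $G^{F}$) collapses the twisted fibre counts into one, giving
\[
|(H^{\btri})^{F}|\;=\;\bigl|(G/P(\lambda))^{F}\bigr|\cdot q^{\dim V(\lambda)_{k+1}}\Bigl(q^{\dim V(\lambda,k)}-\bigl|\N_{L^{\perp}(\lambda),\,V(\lambda,k)}^{F}\bigr|\Bigr),
\]
where $|(G/P(\lambda))^{F}|$ is the usual polynomial in $q$ depending only on the root datum and $\tau$. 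Finally $V(\lambda,k)$ is an admissible $L^{\perp}(\lambda)$-module with lattice $\bigoplus_{\langle\mu,\lambda\rangle=k}V'_{\BZ,\mu}$, carrying the compatible Frobenius $F(\tau\vert_{X(\mathfrak T^{\lambda})},l)$, and $\dim L^{\perp}(\lambda)+\dim V(\lambda,k)<\dim G+\dim V$ since $L^{\perp}(\lambda)\subsetneq G$ for $\lambda\ne0$; so by induction $|\N_{L^{\perp}(\lambda),\,V(\lambda,k)}^{F}|=n_{V(\lambda,k)}(q)$ for a polynomial independent of $p$ and of the lattice. Summing over the $F$-stable strata yields the desired $n_V(t)$, visibly assembled from root-datum/$\tau$ data and from the polynomials attached to the strictly smaller characteristic-zero data, which gives the asserted independence of $p$ and of the lattice.

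The step I expect to be the main obstacle is the point-count identity for $|(H^{\btri})^{F}|$: one has to verify carefully that $G\times^{P(\lambda)}[v]\to H^{\btri}$ really is a bijective morphism over $\F_{p}$ — bijectivity on points is enough for the count, which is fortunate since in bad characteristic this map need not be an isomorphism of varieties — and that the Lang--Steinberg reduction genuinely trivialises every twisted Frobenius occurring on the fibres. A secondary point needing care is checking that $\tau\vert_{X(\mathfrak T^{\lambda})}$ meets the hypotheses of the theorem for $L^{\perp}(\lambda)$ — it preserves the restricted root system, a set of restricted simple roots, and the projected $\tau^{*}$-invariant quadratic form — so that the induction is legitimate; this rests on Proposition~\ref{Zscheme} and Remark~\ref{overZ}.
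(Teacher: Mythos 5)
Your proposal follows essentially the same route as the paper's proof: slice $\N_V$ into Hesselink strata, observe that only $\tau^{*}$-stable strata can contain $F$-fixed points, use the fibre-bundle structure $G\times^{P(\lambda)}\bigl(V(\lambda)_{k+1}\oplus V(\lambda,k)_{ss}\bigr)\to\mathcal H(\lambda,k)$ together with Lang--Steinberg to reduce the stratum count to $|G^{F}/P(\lambda)^{F}|\cdot q^{N(\lambda,k)}\bigl(q^{n(\lambda,k)}-|\N_{V(\lambda,k)}^{F}|\bigr)$, and close the induction by applying the theorem to the admissible $L^{\perp}(\lambda)$-module $V(\lambda,k)$. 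The only cosmetic differences are that the paper inducts on $\mathrm{rk}\,G$ rather than $\dim G+\dim V$ (both descend since $L^{\perp}(\lambda)$ has strictly smaller rank and dimension for $\lambda\ne 0$), invokes Hesselink's Proposition~4.5(b) outright for the $F$-equivariant fibre-bundle identification rather than re-deriving surjectivity and injectivity from Proposition~\ref{stratasetminus} and Theorem~\ref{KRthm}(iv), and spells out more carefully the argument (via an $F$-stable Borel inside $P(v)$ and \cite[3.15]{DM}) that an $F$-fixed $v$ admits an optimal one-parameter subgroup in $Y^{+}(T)$ fixed by $\tau^{*}$, as well as the polynomiality of $|G^{F}/P(\lambda)^{F}|$ via \cite{DM} and \cite{GR} — both points you flag as needing care and both are handled in the paper exactly where you expect them to be.
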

\begin{proof}
Let $\Lambda(V)$ denote the set of pairs $(\lambda,k)$ where
$\lambda\in Y^+(T)$ is primitive and $k$ is a positive integer such
$\mathcal{V}(\lambda,k)_{ss}(\k)\ne \emptyset$ (the notation of
Subsection~\ref{modulestrata}). Set
$\Lambda(V,\tau)=\{(\lambda,k)\in\Lambda(V)\,|\,\,\tau^*\lambda=\lambda\}$
and define
$$\mathcal{H}(\lambda,k)\,:=\,
G\cdot\Big(\mathcal{V}(\lambda,k)_{ss}(\k)\oplus\textstyle{\bigoplus}_{i>k}V(\lambda,i)\Big),$$
the Hesselink stratum associated with $(\lambda,k)\in\Lambda(V)$.
Recall that $\mathcal{V}(\lambda,k)_{ss}(\k)=V(\lambda,k)\setminus
\N_{V(\lambda,k)}$ where $\N_{V(\lambda,k)}$ is the set of all
$L^\perp(\lambda)$-unstable vectors of $V(\lambda,k)$. To ease
notation we set $$V(\lambda,\,\ge
k)_{ss}:=\,\mathcal{V}(\lambda,k)_{ss}(\k)\oplus\textstyle{\bigoplus}_{i>k}V(\lambda,i).$$

If $\mu\in Y(G)$ is optimal for a non-zero vector $v\in {\N_V}^F$,
then so is $\mu^F$, forcing $P(v)=P(\mu)=P(\mu^F)=P(v)^F$. So the
optimal parabolic subgroup of $v$ is $F$-stable. But then $P(v)$
contains an $F$-stable Borel subgroup which, in turn, contains an
$F$-stable maximal torus of $G$; we shall call it $T_1$. Since both $T$
and $T_1$ are $F$-stable maximal tori contained in $F$-stable Borel
subgroups of $G$, there is an element $g_1\in G^F$ such that
$T_1=g_1^{-1}Tg_1$; see \cite[3.15]{DM}. Then $Y(T)$ contains an
optimal one parameter subgroup for $g_1(v)\in V^F$, say $\mu_1$.
Lemma~\ref{opti}(iv) yields $\tau^*\mu_1=\mu_1$. Since the unipotent
radical $U(\mu_1)$ of $P(\mu_1)$ is contained in the Borel subgroup
of $G$ associated with our basis of simple roots $\Pi$, we see that
$\mu_1\in Y^+(T)$.

Now suppose $v\in \mathcal{H}(\lambda,k)^F$, so that $v=gw$ for
some $w\in V(\lambda,\,\ge k)_{ss}$ and $g\in G$ . Let $g_1\in G^F$
and $\mu_1\in Y^+(T)$ be as above (so that $\mu_1$ is optimal for
$v_1=g_1(g'w)\in V^F$). Note that $T\subset L(\mu_1)\subset P(v_1)$.
We may assume without loss of generality that $\mu_1$ is primitive
in $Y(G)$. Since $w$ and $v_1$ are in the same Hesselink stratum of
$V$ it must be that $G\cdot \Delta_{v_1}=G\cdot \Delta_{w}$. This
yields the equality $(G\cdot\mu_1)\cap Y(T)\,=\, (G\cdot\lambda)\cap
Y(T)$ which, in turn, implies that that $\mu_1$ and $\lambda$ are
conjugate under the action of the Weyl group $W$ on $Y(T)$. Since
both $\lambda$ and $\mu_1$ are in $Y^+(T)$, we get $\mu_1=\lambda$.

As a result, we deduce that $\tau^*\lambda=\lambda$. Hence both
$P(\lambda)$ and ${V(\lambda,\,\ge k)_{ss}}$ are $F$-stable.
Applying \cite[Proposition~4.5(b)]{Hess2} now yields that  $g^F\in
gP(w)$. We choose in $G^F$ a set of representatives
$\mathcal{X}(\lambda,\tau, q)$ for $G^F/P(\lambda)^F$, so that
$$|\mathcal{X}(\lambda,\tau, q)|\,=\,|G^F/P(\lambda)^F|.$$ As
$P(\lambda)$ is an $F$-stable connected group, the Lang--Steinberg
theorem shows that $g^{-1}g^F=x^{-1}x^F$ for some $x\in P(v)$; see
\cite[Theorem 3.10]{DM} for instance. Then $gx^{-1}\in P(\lambda)^F$
and hence no generality will be lost by assuming that
$g\in\mathcal{X}(\lambda,\tau, q)$.

According to \cite[Proposition~4.5(b)]{Hess2} there is an
$F$-equivariant bijection between the fibre product
$G\times^{P(\lambda)}V(\lambda,\,\ge
k)_{ss}\cong\,(G/P(\lambda))\times V(\lambda,\,\ge k)_{ss}$ and the
stratum $\mathcal{H}(\lambda,k)$. Since $v\in V^F$ and $g\in G^F$ we
have that $g (w^F)=g w$, which shows that $w\in {V(\lambda,\,\ge
k)_{ss}}^F$. As a consequence,
\begin{equation}\label{HFq}
|\mathcal{H}(\lambda,k)^F|=|\mathcal{X}(\lambda,\tau, q)|\cdot
|{V(\lambda,\,\ge k)_{ss}}^F|=f_{\tau,\lambda}(q)\cdot
q^{N(\lambda,\,k)}
\Big(q^{n(\lambda,k)}-|{\N_{V(\lambda,k)}}^F|\Big)\end{equation}
where $f_{\tau,\lambda}(q)=|\mathcal{X}(\lambda,\tau, q)| =
|G^F/P(\lambda)^F|$,\ $N(\lambda,k)=\sum_{i>k}\, \dim V(\lambda,i)$,
and $n(\lambda,k)=\dim V(\lambda,k)$.

After these preliminary remarks we are going to prove our theorem by
induction on the rank of $G$. If ${\rm rk}\,G=0$, then $G=\{1_G\}$
and hence $\k[V]^G=\k[V]$. Therefore ${\N_V}^F=\{0\}$ and we can
take $1$, a constant polynomial, as $n_V(t)$. Now suppose that ${\rm
rk}\,G>0$ and our theorem holds for all connected reductive groups
of rank $<{\rm rk}\,G$. Since for every $\lambda\in \Lambda(V,\tau)$
we have that ${\rm rk}\,L^\perp(\lambda)<{\rm rk}\,G$ and each
$L^\perp(\lambda)$-module $V(\lambda,i)$ is admissible by our
discussion in Subsection~\ref{modulestrata}, there exist polynomials
$n_{V(\lambda,i)}(t)\in \BZ[t]$ with coefficients independent of $p$
and our choice of an admissible lattice $V_\BZ'(\lambda,i)$ in
$V'(\lambda,i)$ such that
$|{\N_{V(\lambda,i)}}^F|=\,n_{V(\lambda,i)}(q)$.

Next we note that for every $\lambda\in Y(T)$ with
$\tau^*\lambda=\lambda$ there is a polynomial
$f_{\tau,\lambda}\in\BZ[t]$ with coefficients independent of $p$
such that $f_{\tau,\lambda}(q)=|G^F/P(\lambda)^F|$ for all $p^{th}$
powers $q$ and all $p$. Indeed, it is immediate from
\cite[Proposition~3.19(ii)]{DM} that $f_{\tau,\lambda}$ can be
chosen as a quotient $a_{\tau,\lambda}/b_{\tau,\lambda}$ of two
coprime polynomials $a_{\tau,\lambda}, b_{\tau,\lambda}\in\BZ[t]$
with coefficients independent of $p$. Since
$f_{\tau,\lambda}(q)\in\BZ$ for infinitely many $q\in\Z$, it must be
that $\deg\,b_{\tau,\lambda}=0$. Therefore $f_{\tau,\lambda}\in
\mathbb{Q}[t]$. On the other hand, $G^F/P^F$ is the set of
$\F_q$-rational points a smooth projective variety defined over
$\F_p$. Applying \cite[Lemma~2.12]{GR} one obtains that
$f_{\tau,\lambda}\in \BZ[t]$, as stated.

Putting everything together we now get
\begin{eqnarray*}
|{\N_V}^F|&=&1+\sum_{(\lambda,\,k)\in\,\Lambda(V,\tau)}|\mathcal{H}(\lambda,k)^F|\\
&=&1+\sum_{(\lambda,\,k)\in\,\Lambda(V,\tau)}f_{\lambda,\tau}(q)\cdot
q^{N(\lambda,k)}\Big(q^{n(\lambda,k)}-n_{V(\lambda,k)}(q)\Big).
\end{eqnarray*}
Since the data $\big\{\big(n(\lambda,k),
N(\lambda,k)\big)\,|\,\,(\lambda,k)\in \Lambda(V,\tau)\big\}$ arrives
unchanged from the $G'$-module $V'$ and is independent of $p$ by
Theorem~\ref{hesselinkstrata}, the RHS is a polynomial in $q$ with
integer coefficients independent of $p$ and the choice of
admissible lattice in $V'$.
\end{proof}
\begin{rmk} In the notation of Subsection~\ref{modulestrata},
the distribution algebra ${\rm Dist}_\BZ(\mathfrak{G})$ acts
naturally on the $\BZ$-algebra $\BZ[V'_\BZ]$ and we may consider the
invariant algebra of this action, which coincides with
$\BZ[V'_\BZ]^{\mathfrak{G}}$. According to \cite[\S II]{Sesh}, the
algebra $\BZ[V'_\BZ]^{\mathfrak{G}}$ is generated over $\BZ$ by
finitely many homogeneous elements. The ideal of $\BZ[V'_\BZ]$
generated by these elements defines a closed subscheme of the affine
scheme ${\rm Spec}\,\BZ[V'_\BZ]$ which we denote by
$\mathcal{N}(V_\BZ')$. It follows from \cite[Proposition~6(2)]{Sesh}
that for any prime $p\in\BN$ the nullcone $\N_V$ coincides with the
variety of closed points of the affine $\k$-scheme
$\N(V_\BZ')\times_{{\rm Spec}\,\Z}\,{\rm Spec}\,\k$. At this point
Theorem~\ref{NFq} shows that the affine $\BZ$-scheme $\N(V_\BZ')$ is
{\it strongly polynomial-count} in the terminology of N.~Katz.
Applying \cite[Theorem~1(3)]{Katz} we now deduce that the polynomial
$n_V(t)$ from Theorem~\ref{NFq} is closely related with the {\it
$E$-polynomial}
$E(\N_{V'};x,y)\,=\,\sum_{i,j}e_{i,j}x^iy^j\in\BZ[x,y]$ of the
complex algebraic variety $\N_{V'}$.  More precisely, we have that
$E(\N_{V'};x,y)=n_V(xy)$ as polynomials in $x,y$; see
\cite[p.~618]{Katz} for more details. This shows that the
coefficients of $n_V(t)$ are determined by Deligne's mixed Hodge
structure on the compact cohomology groups
$H_c^k(\N_{V'},\mathbb{Q})$.
\end{rmk}

Define $n'_V(t):=(n_V(t)-1)/(t-1)$. As $n'_V(q)={\rm
Card}\,\big\{\F_q^\times\, v\,|\,\,v\in {\N_V}^F,\ v\ne 0\big\}$ for
all $p^{th}$ powers $q$, it is straightforward to see that $n'_V(t)$
is a polynomial in $t$. The long division algorithm then shows that
$n'_V(t)\in \Z[t]$. We conjecture that the polynomial $n'_V(t)$ has
{\it non-negative} coefficients. This conjecture holds true for
$\mathfrak{G}= \mathbf{SL}_2$ where one can compute $n'_V(t)$
explicitly for any admissible $G$-module $V$. The details are left
as an exercise for the interested reader.
\section{Nilpotent pieces in $\gl$ and $\gl^*$}\label{dual}
\subsection{}\label{morepieces} We now define nilpotent pieces in the Lie algebra $\gl$ completely
analogously to the definition of unipotent pieces, that is, we
partition $\gl_{\nil}=\N_\gl$ into smooth locally closed $G$-stable
pieces, indexed by the unipotent classes in $G'=\mathfrak{G}(\C)$.
For convenience, we now allow $\chara \k = p \ge 0$.  For $\wtri
\in D_G$ and $i \ge 0$ we define $\gl_{i}^{\wtri} = \Lie
G_i^{\wtri}$. For any $G$-orbit $\btri \in D_G$, let
$\tilde{H}^{\btri}(\gl) = \bigcup_{\wtri \in \btri}
\gl_{2}^{\wtri}$. This is a closed irreducible $G$-stable variety by
the proof of Lemma \ref{Gn}. We define the {\em nilpotent pieces} of
$\gl$ to be the sets
\begin{equation*}  {H}^{\btri}(\gl)  := \,\tilde{H}^{\btri}(\gl) \setminus
\textstyle{\bigcup}_{\btri'}\, \tilde{H}^{\btri'}(\gl),
\end{equation*}
where the union is taken over all $\btri' \in D_G/G$ such that
$\tilde{H}^{\btri'}(\gl) \subsetneqq \tilde{H}^{\btri}(\gl)$. We
also define
\begin{equation*}  {X}^{\wtri}(\gl)  := \, \gl_{2}^{\wtri}\, \textstyle{\bigcap}\, H^{\btri}(\gl),
\end{equation*}
for each $\wtri \in D_G$, where $\btri$ is the $G$-orbit of $\wtri$.
Since ${H}^{\btri}(\gl)$ is the complement of finitely many
non-trivial closed subvarieties of $\tilde{H}^{\btri}(\gl)$, it is
open and dense in $\tilde{H}^{\btri}(\gl)$, hence it is locally
closed in $\gl_{\nil}$. The subset ${H}^{\btri}(\gl)$ is $G$-stable
since its complement in $\tilde{H}^{\btri}(\gl)$ is. Consequently,
${X}^{\wtri}(\gl)$ is open and dense in $\gl_{2}^{\wtri}$, and
stable under the adjoint action of $G_0^{\wtri}$.

 Recall from Subsections~\ref{Kraft} and
~\ref{main} that for any $\wtri\in D_G$ there is an element $g\in G$
and a one parameter subgroup $\omega\in Y(T)=Y(T')$, coming from a
rational homomorphism $\SL_2(\C)\rightarrow G'$, such that
$\frac{1}{2}\omega\in\tilde{\Delta}_x$ for some $x\in
{\gl'}(2,\omega)$  and $\gl_{k}^\wtri\,=\,\bigoplus_{i\ge
k}\,\gl(i,g\cdot\omega)$ for all $k\in\BZ$. Note that different
$g\in G$ with this property have the same image in
$G_0^{\wtri}\backslash G$. Given $\mu\in Y(G)$ and $i\in\BZ$ we
denote by $\gl^*(i, \mu)$ the subspace in $\gl^*$ consisting of all
linear functions that vanish on each $\gl(j,\mu)$ with $j\ne -i$.
Now define $(\gl^*)^\wtri_k\,:=\,\bigoplus_{i\ge k}\,\gl^*(i,
g\cdot\omega)$, for $k\in\BZ$. The preceding remark shows that this
is independent of the choice of $g\in G$ and therefore the subspaces
$(\gl^*)^\wtri_k$ are well-defined.

In a completely analogous way we now define the {\it nilpotent
pieces} of the dual space $\gl^*$. For any $G$-orbit $\btri \in
D_G$, we let $\tilde{H}^{\btri}(\gl^*) = \bigcup_{\wtri \in \btri}
(\gl^*)_{2}^{\wtri}$, a closed irreducible $G$-stable subset of
$\gl^*$, and put
\begin{equation*}  {H}^{\btri}(\gl^*)  := \,\tilde{H}^{\btri}(\gl^*) \setminus
\textstyle{\bigcup}_{\btri'}\, \tilde{H}^{\btri'}(\gl^*),
\end{equation*}
where the union is taken over all $\btri' \in D_G/G$ with
$\tilde{H}^{\btri'}(\gl^*) \subsetneqq \tilde{H}^{\btri}(\gl^*)$. We
define
\begin{equation*}  {X}^{\wtri}(\gl^*)  := \, (\gl^*)_{2}^{\wtri}\, \textstyle{\bigcap}\, H^{\btri}(\gl^*),
\end{equation*}
for each $\wtri \in D_G$. Arguing as before we observe that each
${H}^{\btri}(\gl^*)$ is a $G$-stable, locally closed subset of
$\N_{\gl^*}$. Hence ${X}^{\wtri}(\gl^*)$ is open and dense in
$\gl_{2}^{\wtri}$, and stable under the coadjoint action of
$G_0^{\wtri}$.

\subsection{} In the next two subsections we
study the nullcone $\N_{\gl^*}$ associated with the coadjoint action
of $G$ on the dual space $\gl^*=\Hom_{\k\,}(\gl,\k)$. Recall that
$(g\cdot \xi)(x) = \xi( (\Ad g^{-1}) x )$ for all $g \in G$, $x \in
\gl$, $\xi \in \gl^*$. It is immediate from the Hilbert--Mumford
criterion (our Theorem~\ref{HMum}) that $\xi\in\N_{\gl^*}$ if and
only if $\xi$ vanishes on the Lie algebra of a  Borel subgroup of
$G$. The nilpotent linear functions $\xi\in\N_{\gl^*}$ play an
important role in the study of the centre of the enveloping
algebra $U(\gl)$  and were first investigated in our setting by Kac
and Weisfeiler in \cite{KW}. In characteristic zero the Killing form
induces a $G'$-equivariant isomorphism $\gl' \cong (\gl')^*$.
However, in positive characteristic it may happen that
$\gl\not\cong\gl^*$ as $G$-modules.

We first assume that the group $G$ is simple and simply connected.
Rather than study $\gl^*$ directly, we will present a slightly
different construction which will allow us to combine Theorems
\ref{modulestrata} and \ref{NFq} with classical results of Dynkin
\cite{Dyn}  and Kostant \cite{Kos} on $\N_{\gl'}$. As before, we fix
a set of simple roots $\Pi$ in $\Sigma$ and denote the
corresponding set of positive roots by $\Sigma^+$. Let ${\mathcal
C}'\,=\,\{ X_{\alpha}, H_{\beta} \ \vline \ \alpha \in \Sigma,\,
\beta \in \Pi\}$ be a Chevalley basis of $\gl'$ and denote by
$\gl_\Z'$ the $\BZ$-span of ${\mathcal C}'$ in $\gl$. Then the
following equations hold in $\gl'_\BZ$:
\begin{enumerate} [(i)]
\item $[H_{\alpha}, X_{\beta}] = \langle \beta, \alpha \rangle X_{\beta}$ for all $\alpha\in\Pi$,
$\beta\in\Sigma$;
\smallskip
\item $[X_{\beta}, X_{-\beta}] = H_{\beta}$ for all $\beta \in\Pi$, where $H_\beta={\rm d}_e\beta^\vee$
is an integral linear combination of $H_\alpha={\rm d}_e\alpha^\vee$
with $\alpha\in\Pi$;
\smallskip
\item $[X_{\alpha}, X_{\beta}] =  N_{\alpha, \beta}\, X_{\alpha + \beta}$
    if $\alpha + \beta \in \Sigma$, where $N_{\alpha, \beta} = \pm (q+1)$ and $q$ is the
    maximal integer for which $\beta - q \alpha \in \Sigma$;
\smallskip
\item $[X_{\alpha}, X_{\beta}] = 0$ if $\alpha + \beta \notin
    \Sigma$;
\end{enumerate}
see \cite[\S 1]{St}, for example. As usual, $\langle \alpha, \beta
\rangle = 2(\alpha, \beta)/(\alpha, \alpha)$, where $(\ ,\ )$ is a
scalar product on the $\BR$-span of $\Pi$, invariant under the
action of the Weyl group $W$ of $\Sigma$. We may assume, by
rescaling if necessary, that $(\alpha,\alpha ) = 2$ for every short
root $\alpha$  of $\Sigma$. Let $\tilde{\alpha}$ denote the maximal
root, and $\alpha_0$ the maximal short root in $\Sigma^+$
respectively, and set $d := (\tilde{\alpha}, \tilde{\alpha}) /
(\alpha_0, \alpha_0 )$. Recall that a prime $p\in\mathbb N$ is
called {\it special} for $\Sigma$ if $d \equiv 0 \pmod p$. The
special primes are $2$ and $3$. To be precise, $2$ is special for
$\Sigma$ of type $\Btype_\ell$, $\Ctype_\ell$, $\ell\ge 2$, and
$\Ftype_4$, whilst $3$ is special for $\Sigma$ of type $\Gtype_2$.

Since $G$ is assumed to be simply connected, we have that $\gl
=\Lie G = \gl_{\BZ}' \otimes_{\BZ} \k$  (cf. \cite[\S 2.5]{Bor2} or
\cite[\S 1.3]{Jan3}). Also, the distribution algebra ${\rm
Dist}_\BZ(\mathfrak{G})$ identifies canonically with the unital
$\BZ$-subalgebra of the universal enveloping algebra $U(\gl')$
generated by all $X_{\beta}^n/n!$ with $\beta\in\Sigma$ and $n \in
\BN$. The algebra $U_{\BZ}$ is known as {\it Kostant's
$\Z$-form} of $U(\gl)$ and was first introduced in \cite{Kos2}.
Thus, a $\Z$-lattice $V'_\BZ$ in a finite-dimensional $\gl'$-module
$V'$ is admissible if and only if it is invariant under all
operators $X_{\alpha}^n/n!$ ($n \in \BN$) under the obvious action
of $U(\gl')$ on $V'$. For instance, $\gl_{\BZ}'$ itself is
admissible, since $\gl_{\BZ}' =\, U_{\BZ}\cdot X_{\tilde{\alpha}}$.

We now recall very briefly how admissible lattices give rise to
rational $G$-modules. Let $V=V'_\BZ\otimes_\BZ\k$. Since ${\rm
Dist}_\k(G)\,=\,{\rm
Dist}_\BZ(\mathfrak{G})\otimes_{\BZ}\k\,=\,U_\BZ\otimes_\Z\k$, the
action of $U_\BZ$ on $V_\BZ'$ gives rise to a representation of
${\rm Dist}_\k(G)$ on $\End_\k V$, and hence to a rational linear
action of $G$ on $V$; see \cite[\S\S II.1.12, II.1.20]{Jan} for
more details. Given $X \in U_{\Z}$ we denote the induced linear
transformations on $V_{\BZ}'$ and $V$ by $\rho_{\BZ}(X)$. We then
define invertible linear transformations $x_{\beta}(t) = \sum_{n\ge
0} t^n \rho_\BZ(X_{\beta}^n/n!)$ on $V$, for each $\beta \in
\Sigma$, where $t \in \k$. (Note that the sum is finite since the
$X_{\beta}$ act nilpotently on $V'$.) The set $\big\{
x_{\beta}(t)\,\vline \,\, \beta \in \Sigma,\, t \in \k \big\}$
generates a Zariski-closed, connected subgroup $G(V)$ of $\GL(V)$.
Since $G$ is simply connected and hence a universal Chevalley group
in the sense of \cite{St}, the linear group $G(V)$ is a homomorphic
image of $G$. For any admissible lattice $V'_{\BZ}$ in a
finite-dimensional $\gl'$-module $V'$, we thus obtain a $G$-module
structure on $V=V'_{\BZ}\otimes_{\BZ} \k$.

Define a symmetric bilinear form $\langle\,\ , \ \rangle \colon\,
\gl'_{\BZ} \times \gl'_{\BZ} \rightarrow \BZ$ by setting
\begin{eqnarray*}
\langle X_{\alpha}, X_{\beta}
 \rangle  & = & 0 \ \, \ \ \ \ \ \ \ \ \ \ \ \ \ \ \mbox{ if } \alpha + \beta \not= 0, \\
\langle H_{\alpha}, H_{\beta} \rangle  & = & \frac{4d(\alpha,
\beta)}{(\alpha, \alpha)(\beta, \beta)}\ \,
\mbox{ for all } \alpha, \beta \in \Sigma, \\
\langle X_{\alpha}, X_{-\alpha} \rangle  & = & \frac{2d}{(\alpha,
\alpha)} \ \ \ \ \ \ \ \ \  \mbox{ for all } \alpha \in \Sigma,
\end{eqnarray*}
and extending to $\gl_\BZ'$ by $\BZ$-bilinearity. Note that this is
well-defined, since the condition $(\alpha_0, \alpha_0) = 2$ ensures
that the image is indeed in $\BZ$; see Bourbaki's tables in
\cite{Bour}. Obviously we may extend $\langle\,\ , \ \rangle$ to
symmetric bilinear forms $\langle\,\ , \ \rangle_{\C}$ on $\gl' =
\gl_{\BZ}' \otimes_{\BZ} \C$, and $\langle\,\ , \ \rangle_{\k}$ on
$\gl = \gl'_{\BZ} \otimes_{\BZ} \k$.

In is proved in \cite[p.~240]{Pre2} that the bilinear form $\langle\
, \ \rangle_{\C}$ is a scalar multiple of the Killing form $\kappa$
of $\gl'=\Lie G'$. In particular, $\langle\,\ , \ \rangle_{\C}$ is
$G'$-invariant. This, in turn, implies that
\begin{equation}\label{antipode}\langle X(u),v\rangle=\langle
u,X^{\scriptstyle{\top}}(v)\rangle\ \ \,\mbox{for all }\ u,v\in
V'_\BZ \ \mbox{ and }\, X\in U_\BZ,\end{equation} where
$\scriptstyle{\top}$ stands for the canonical anti-automorphism of
$U(\gl)$. Since $x^{\scriptstyle{\top}}=-x$ for all $x\in\gl'$, it
is straightforward to see that $\scriptstyle{\top}$ preserves the
$\BZ$-form $U_\BZ$ of $U(\gl')$. (In fact, the map
${\scriptstyle{\top}}\colon\,U_\BZ\rightarrow U_\BZ$ is nothing but
the antipode of the Hopf algebra $U_\BZ={\rm
Dist}_\BZ(\mathfrak{G})$.) As a consequence, the bilinear form
$\langle\,\ , \ \rangle_{\k}$ on $\gl=\Lie G$ is $G$-invariant.

\begin{lemma} \label{Z-dual} If $p$ is non-special for $\Sigma$,
then the radical of $\langle\,\ , \ \rangle_{\k}$ coincides with
the centre $\mathfrak{z}(\gl)$ of the Lie algebra $\gl$. If $p$ is
special for $\gl$, then $\Rad\, \langle\,\ , \ \rangle_{\k}
\not\subseteq\, \mathfrak{z}(\gl)$.
\end{lemma}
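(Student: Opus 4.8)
The plan is to analyze the radical of the bilinear form $\langle\ ,\ \rangle_\k$ on $\gl=\gl'_\BZ\otimes_\BZ\k$ directly in terms of the Chevalley basis, exploiting the block structure of the form. First I would observe that $\langle\ ,\ \rangle_\k$ decomposes $\gl$ orthogonally into the toral part $\t=\,\k\otimes_\BZ\big(\bigoplus_\alpha\BZ H_\alpha\big)$ and the pairs of opposite root spaces $\k X_\alpha\oplus \k X_{-\alpha}$, because $\langle X_\alpha,X_\beta\rangle=0$ unless $\alpha+\beta=0$ and $\langle H_\alpha,X_\beta\rangle=0$. Hence $\Rad\langle\ ,\ \rangle_\k=\,\big(\Rad\langle\ ,\ \rangle_\k\cap\t\big)\,\oplus\,\bigoplus_{\alpha\in\Sigma^+}\big(\Rad\langle\ ,\ \rangle_\k\cap(\k X_\alpha\oplus\k X_{-\alpha})\big)$. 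On the pair $\k X_\alpha\oplus\k X_{-\alpha}$ the form is the off-diagonal rank-one form with value $\langle X_\alpha,X_{-\alpha}\rangle_\k=\,\overline{2d/(\alpha,\alpha)}$, so its radical on that pair is zero precisely when $2d/(\alpha,\alpha)\not\equiv 0\pmod p$, and is the whole pair otherwise. Since $d=(\tilde\alpha,\tilde\alpha)/(\alpha_0,\alpha_0)$ and we have normalised $(\alpha,\alpha)\in\{2,2d\}$ (short roots have square-length $2$, long roots square-length $2d$ — using that there are at most two root lengths in an irreducible system), the integer $2d/(\alpha,\alpha)$ equals $d$ for short $\alpha$ and $1$ for long $\alpha$. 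Therefore, when $p$ is non-special (i.e.\ $p\nmid d$), none of the root-space pairs contributes to the radical, whereas when $p$ is special ($p\mid d$) the pairs $\k X_\alpha\oplus\k X_{-\alpha}$ with $\alpha$ short lie entirely in the radical.

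Next I would handle the toral part. An element $h=\sum_{\alpha\in\Pi}c_\alpha H_\alpha\in\t$ lies in $\Rad\langle\ ,\ \rangle_\k$ iff $\langle h,H_\beta\rangle_\k=0$ for all $\beta\in\Pi$; writing this out via $\langle H_\alpha,H_\beta\rangle=4d(\alpha,\beta)/((\alpha,\alpha)(\beta,\beta))$ this becomes a linear condition governed by the reduction mod $p$ of the Cartan-type matrix with entries $\langle\alpha,\beta^\vee\rangle\cdot(\text{scaling})$. The point is that $\Rad\langle\ ,\ \rangle_\k\cap\t$ always contains the image mod $p$ of $\{h\in\gl'_\BZ\cap\t\,:\,\langle h,H_\beta\rangle_\BZ\in p\BZ\ \forall\beta\}$, which is exactly the part of $\mathfrak z(\gl)$ sitting in $\t$ (the centre of $\gl$ is concentrated in $\t$ since the only way a nonzero $\k$-combination of root vectors and toral elements can be central is to be toral — brackets $[H_\alpha,X_\beta]=\langle\beta,\alpha\rangle X_\beta$ and the simplicity assumptions control this). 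I would then invoke the known description of $\mathfrak z(\gl)$ for simple simply connected $G$: it is nonzero exactly in the cases enumerated in the literature (e.g.\ \cite{Hog} or \cite{Hum}) and lies in $\t$. Comparing the two computations: for $p$ non-special the root-pair contributions vanish, so $\Rad\langle\ ,\ \rangle_\k=\Rad\langle\ ,\ \rangle_\k\cap\t$, and I would check that the toral radical coincides with $\mathfrak z(\gl)$ — this is essentially the statement that the Killing-form-type pairing on $\t$ has radical exactly the centre when $p$ is non-special, which one can read off case by case from the tables of the matrix $\big(4d(\alpha,\beta)/((\alpha,\alpha)(\beta,\beta))\big)_{\alpha,\beta\in\Pi}$, or deduce more cleanly from the fact that $\langle\ ,\ \rangle_\C$ is a nonzero scalar multiple of $\kappa$ together with control of denominators. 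For $p$ special, the existence of a short root $\alpha$ gives $X_\alpha\in\Rad\langle\ ,\ \rangle_\k$ but $X_\alpha\notin\mathfrak z(\gl)$ (it does not commute with $H_\alpha$, as $\langle\alpha,\alpha^\vee\rangle=2\ne 0$ in characteristic $2$ or $3$ — one checks the relevant $\langle\beta,\alpha^\vee\rangle$ is a unit for an appropriate short $\alpha$), which gives the strict non-containment.

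The main obstacle I anticipate is the \emph{toral} computation for non-special $p$: showing that $\Rad\langle\ ,\ \rangle_\k\cap\t$ is \emph{exactly} $\mathfrak z(\gl)$ and not larger. The inclusion $\mathfrak z(\gl)\cap\t\subseteq\Rad\langle\ ,\ \rangle_\k$ is formal from $G$-invariance of the form (a central element pairs trivially against everything because one can conjugate), but the reverse inclusion requires knowing that the reduced Gram matrix of $\langle\ ,\ \rangle_\k$ on $\t$ has nullity no bigger than $\dim\mathfrak z(\gl)\cap\t$. Since $\langle\ ,\ \rangle_\C=c\,\kappa$ for some nonzero $c\in\C$ and $\kappa$ restricted to $\t'_\BZ$ has a known determinant (a product of small integers times a power of the connection index), the determinant of the Gram matrix of $\langle\ ,\ \rangle_\BZ$ on a $\BZ$-basis of $\t_\BZ$ is an explicit integer whose prime factors one can list from Bourbaki's tables \cite{Bour}; the claim is that away from special primes its $p$-adic valuation accounts precisely for $\dim_{\F_p}\mathfrak z(\gl)\cap\t$. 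I would either cite this directly from \cite{Pre2} or \cite{Jan3} where such discriminant computations appear, or carry it out type by type — the bound on root lengths means only types $B,C,F_4,G_2$ have $d>1$ and these are exactly the special cases, so for simply-laced types $d=1$, every prime is non-special, and the statement reduces to the classical fact that $\Rad\kappa_\k=\mathfrak z(\gl)$ in the simply-laced simply connected case, which is standard.
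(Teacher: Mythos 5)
Your overall strategy closely parallels the paper's. Both proofs ultimately defer the hard part of the first statement — that for non‑special $p$ the toral radical coincides with $\mathfrak{z}(\gl)\cap\mathfrak{t}$ — to \cite[Lemma 2.2(ii)]{Pre2}, which is exactly what the paper cites. Your preliminary block decomposition of the form into $\mathfrak{t} \perp \bigoplus_{\alpha\in\Sigma^+}(\k X_\alpha\oplus\k X_{-\alpha})$, and the computation $\langle X_\alpha,X_{-\alpha}\rangle = 2d/(\alpha,\alpha) \in \{1,d\}$ (long vs.\ short), is the correct mechanism behind the lemma, and your argument for the second statement (short root vectors land in the radical when $p\mid d$) agrees with the paper, which uses $X_{\alpha_0}$ for $\alpha_0$ the maximal short root.

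There is, however, a genuine error in your justification that $X_\alpha\notin\mathfrak{z}(\gl)$ for $\alpha$ short. You write that $X_\alpha$ fails to commute with $H_\alpha$ because $\langle\alpha,\alpha^\vee\rangle = 2 \ne 0$ in characteristic $2$ or $3$. But $2 = 0$ in characteristic $2$, which is precisely the special prime for types $\Btype$, $\Ctype$ and $\Ftype_4$, so this reason fails in the cases you most need it. Your subsequent hedge about choosing a different $\beta$ with $\langle\alpha,\beta^\vee\rangle$ a unit can be made to work, but it turns the argument into a case check. A cleaner route — and what the paper's parenthetical ``(Recall that $G$ is assumed to be simply connected.)'' is pointing at — is to observe that $[X_{\alpha_0},X_{-\alpha_0}] = \pm H_{\alpha_0}$, and $H_{\alpha_0}$ is a primitive vector of the coroot lattice, hence nonzero in $\mathfrak{t} = \BZ\Sigma^\vee\otimes_\BZ\k$ when $G$ is simply connected; so $X_{\alpha_0}$ cannot be central, no Cartan element required. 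Relatedly, your claim that $\mathfrak{z}(\gl)\subseteq\Rad$ ``formally from $G$-invariance because one can conjugate'' is too vague: the standard argument uses associativity $\langle [x,y],z\rangle = \langle x,[y,z]\rangle$, which puts any central $z$ orthogonal to $[\gl,\gl]$, together with the perfectness of $\gl$ for $G$ simply connected (again by the coroot relation $[X_\alpha,X_{-\alpha}]=\pm H_\alpha$). With these two fixes your proof becomes a fully self-contained version of what the paper dispatches with a citation plus a two-line observation.
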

\begin{proof}
The first statement of the lemma is \cite[Lemma 2.2(ii)]{Pre2}. For
the second statement, we note that the image of $X_{\alpha_0}$ in
$\gl\,=\big(\gl'_\BZ/p\gl'_\BZ\big)\otimes_{\F_p}\k$ lies in the
radical of $\langle\,\ , \ \rangle_{\k}$, but not in the centre of
$\gl$. (Recall that $G$ is assumed to be simply connected.)
\end{proof}
The lemma hints at the fact that $\gl$ and $\gl^*$ are similar as
$G$-modules if $p$ is non-special, but very different if $p$ is
special. Nevertheless, as we will see, we may construct an
alternative admissible lattice $\gl''_{\BZ} \subset \gl'$ which
gives rise to another $G$-module $\gl''_{\BZ} \otimes_{\BZ} \k$
such that $\langle\ , \ \rangle$ induces a non-degenerate pairing
between $\gl''_{\BZ} \otimes_{\BZ} \k$ and $\gl$ in all cases. This
will enable us to identify the $G$-modules $\gl''_\BZ\otimes_\BZ\k$
and $\gl^*$.
\subsection{} \label{duallattice} We define
$\gl_{\BZ}'' \,:=\, \{ x \in \gl'\,|\,\, \langle x, y \rangle \in
\BZ, \ \forall\, y \in \gl'_{\BZ}\}$, a $\BZ$-lattice in $\gl'$. It
is immediate from (\ref{antipode}) that $\gl''_\BZ$ is an admissible
lattice. Consequently, we obtain a $G$-module structure on the
vector space $\gl_{\BZ}' \otimes_{\BZ} \k$. We also obtain a
$G$-invariant pairing \begin{equation}\label{form}\langle\ \,, \
\rangle^*_{\k}\,\colon\,\, \gl \times \big(\gl_{\BZ}'' \otimes_{\BZ}
\k\big)\, \longrightarrow\,\, \k.\end{equation} We will now exhibit
a basis of $\gl_{\BZ}''$ dual to our Chevalley basis ${\mathcal
C}'$, with respect to $\langle\,\ , \ \rangle$. Thus, we will show
that the pairing $\langle\,\ , \ \rangle^*_{\k}$ is non-degenerate.
Let $\mathfrak{t}'$ be the Cartan subalgebra of $\gl'$ spanned by
$\{ H_{\alpha}\,|\,\, \alpha \in \Pi \}$. Let $\{
H'_{\alpha}\,|\,\,\alpha \in \Pi \}$ be the dual basis of
$\mathfrak{t}'$ with respect to the restriction of $\langle\ , \
\rangle_\C$ to $\mathfrak{t}'$. (These may be thought of as the
fundamental weights of the dual root system $\Sigma^{\vee}$.) This
extends to a basis
\begin{equation*} {\mathcal C}\,=\, \big\{ H'_{\alpha}\,|\,\, \alpha \in \Pi
\big\}\,\, \textstyle{\bigsqcup}\,\, \big\{ X_\beta\,|\,\, \beta \in
\Sigma \mbox{ long} \big\}\,\, \textstyle{\bigsqcup}\,\,
\big\{(1/d)X_\beta\,|\,\, \beta \in \Sigma \mbox{ short} \big\}
\end{equation*}
of $\gl$ which is dual to our Chevalley basis ${\mathcal C}'$ with
respect to $\langle\,\ , \ \rangle_\C$. Specifically, the
corresponding pairing of basis elements is as follows:
\begin{eqnarray*}
H_{\alpha}   & \leftrightarrow & H'_{\alpha} \qquad\qquad\ \ \,\, \mbox{ if } \alpha\in\Pi,\\
X_{\beta} & \leftrightarrow &
X_{- \beta}\qquad\qquad\ \,\mbox{ if } \beta\in \Sigma \mbox{ is long}, \\
X_{\beta} & \leftrightarrow & (1/d)X_{ - \beta} \qquad \,\mbox{ if }
\beta\in\Sigma \mbox{ is short}.
\end{eqnarray*}
Moreover, it is easy to check that ${\mathcal C}$ is a $\BZ$-basis
of $\gl_{\BZ}''$, as required. Since the lattice $\gl_{\BZ}''$ is
admissible, we see that the bases ${\mathcal C}'\otimes 1$ of
$\gl=\gl'_\BZ\otimes_\BZ\k$ and ${\mathcal C}\otimes 1$ of
$\gl''_\BZ\otimes_\BZ\k$ are dual to each other with respect to
$\langle\ \,, \ \rangle^*_{\k}$. This shows that $\gl$ and
$\gl^*\cong\,\gl''_\BZ\otimes_\BZ\k$ are admissible $G$-modules
associated with different admissible lattices in $\gl'$.

Now suppose that $G$ is semisimple and simply connected. Then $G$ is
a direct product of simple, simply connected groups and the above
arguments carry over to $G$ in a straightforward fashion. In
particular, (\ref{form}) is still available for a suitable choice of
an admissible lattice $\gl''_\BZ\subset\gl'$ and
$\gl^*\cong\,\gl_\BZ''\otimes_\BZ\k$ as $G$-modules.

\begin{theorem} \label{g-strat}
Let $G$ be a connected reductive group over an algebraically closed
field $\k$ of characteristic $p\ge 0$ and let $\mathcal G$ be $\gl$
or $\gl^*$. If $\k$ is an algebraic closure of $\F_p$, assume
further that we have a Frobenius endomorphism $F\colon\,G\rightarrow
G$ corresponding to an $\F_q$-rational structure of $G$. Then
$\gP_1$--\,\,$\gP_5$ hold for $\mathcal G$ and the stabiliser $G_x$
of any element $x\in X^\wtri(\mathcal{G})$ is contained in the
parabolic subgroup $G_0^\wtri$ of $G$.
\end{theorem}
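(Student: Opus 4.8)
The plan is to run the proof of Theorem~\ref{uni-pieces} (and of the theorem immediately following it) with the conjugation action of $G$ on $G_{\uni}$ replaced by the adjoint, respectively coadjoint, action of $G$ on the nullcone $\N_{\mathcal G}$ of $\mathcal G$. The one point that genuinely needs proof is the identification
$(\star)$:\ \emph{the saturated subsets of $\mathcal G$ are exactly the spaces $\mathcal G_2^\wtri$ with $\wtri\in D_G$, and $\tilde\Delta_{\mathcal G}=\tfrac12\tilde D_G$}. Granting $(\star)$, everything falls out as in Section~\ref{unip}: for $v$ with $S(v)=\mathcal G_2^\wtri$ one has $GS(v)=G\cdot\mathcal G_2^\wtri=\tilde H^\btri(\mathcal G)$, so Proposition~\ref{stratasetminus} identifies the stratum $G[v]$ with $H^\btri(\mathcal G)$ and Lemma~\ref{bladestrataprop} identifies the blade $[v]$ with $\mathcal G_2^\wtri\cap H^\btri(\mathcal G)=X^\wtri(\mathcal G)$. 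Hence $\gP_1$ and $\gP_3$ are the statements that the blades, resp.\ the strata, of $\N_{\mathcal G}$ partition it, while $\gP_2$ holds because $H^\btri(\mathcal G)=\bigsqcup_{\wtri\in\btri}X^\wtri(\mathcal G)$ by construction. Property $\gP_4$ (in the form $\mathcal G_3^\wtri+X^\wtri(\mathcal G)=X^\wtri(\mathcal G)$) is proved verbatim as in the group case, with Chevalley's commutator relations replaced by the additivity of $\mathcal G$ and Theorem~\ref{KNuni} replaced by its linear predecessor Theorem~\ref{Tsujii}: adding an element of $\mathcal G_3^\wtri=\bigoplus_{i\ge 3}\mathcal G(i,\omega)$ to $x\in X^\wtri(\mathcal G)$ leaves the minimal non-zero $\omega$-component of $x$ unchanged, hence preserves both the optimality of $\omega$ for $x$ and the value $\|x\|$ (via Theorem~\ref{Tsujii} and Lemma~\ref{saturated}). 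Finally, for $x\in X^\wtri(\mathcal G)=[v]$ we have $\Delta_x=\Delta_v$, so the optimal parabolic of $x$ is $P(\omega)=G_0^\wtri$, and Theorem~\ref{KRthm}(iv) gives $G_x\subseteq G_0^\wtri$.

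To prove $(\star)$ I would separate the cases $\chara\k=0$ and $\chara\k=p>0$. When $\chara\k=0$ the Killing form gives a $G$-equivariant isomorphism $\gl\cong\gl^*$, so we may take $\mathcal G=\gl$; then $(\star)$ is the classical Dynkin--Kostant description of the saturated sets of $\gl_{\nil}$ as the spaces $\bigoplus_{i\ge 2}\gl(i)$ attached to $\s\l_2$-gradings (alternatively, it follows by repeating the second half of the proof of Theorem~\ref{uni-pieces}, i.e.\ Jacobson--Morozov together with Theorem~\ref{Kraft}, with $V=\gl$ in place of $G_{\uni}$; this is in fact simpler here, since one works directly in the graded pieces $\gl(k,\lambda)$ rather than in the subquotients $U_k(\lambda)/U_{k+1}(\lambda)$ of Subsection~\ref{KNess}). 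When $\chara\k=p>0$ I would use that, by Subsections~\ref{morepieces} and~\ref{duallattice}, both $\gl$ and $\gl^*$ are admissible $G$-modules arising from the adjoint $G'$-module $\gl'$ --- for $\gl^*$ via the admissible lattice $\gl''_\BZ$ of Subsection~\ref{duallattice}, together with the fact that $\langle\,\ ,\ \rangle_\C$ is a non-zero scalar multiple of the Killing form (hence $G'$-equivariant), so that $\gl''_\BZ\otimes_\BZ\C$ is again the adjoint module. Theorem~\ref{hesselinkstrata}(i) then identifies the saturated sets of $\mathcal G$ relative to a maximal torus $T$ with those of $\gl'$ relative to the corresponding torus $T'$, through base change $S\mapsto S\otimes_\BZ\k$ of $\mathrm{Dist}(\mathfrak T)$-stable summands $S$. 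By the characteristic-zero case the saturated sets of $\gl'$ are exactly the spaces $\bigoplus_{\langle\chi,\omega\rangle\ge 2}\gl'_\chi=(\gl')_2^\wtri$ with $\wtri\in D_G$ (under the identification $D_G\cong D_{G'}$), and the corresponding $S=\bigoplus_{\langle\chi,\omega\rangle\ge 2}(\,\cdot\,)_{\BZ,\chi}$ base-changes to $\mathcal G_2^\wtri$: for $\mathcal G=\gl$ this is immediate, and for $\mathcal G=\gl^*$ the dual-basis description of Subsection~\ref{duallattice} shows that $\gl^*\cong\gl''_\BZ\otimes_\BZ\k$ is weight-preserving and carries $(\gl^*)_2^\wtri=\bigoplus_{i\ge 2}\gl^*(i,\omega)$ onto the base change of $\bigoplus_{\langle\chi,\omega\rangle\ge 2}\gl''_{\BZ,\chi}$, whose complexification is $(\gl')_2^\wtri$. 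Conjugating over $G$ (all maximal tori being conjugate) then gives $(\star)$.

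For $\gP_5$ one takes $\k=\ov{\F_p}$ with the given Frobenius $F$ and imitates the counting in the proof of Theorem~\ref{NFq}. Fix $\wtri$ (equivalently its $G$-orbit $\btri$) with $F(\mathcal G_i^\wtri)=\mathcal G_i^\wtri$ for all $i$, and choose an $F$-stable maximal torus $T$ and an $F$-fixed $\omega\in Y(T)$ with $\mathcal G_2^\wtri=\bigoplus_{i\ge 2}\mathcal G(i,\omega)$. Then $H^\btri(\mathcal G)$ is the Hesselink stratum $\mathcal H(\omega,2)$, and by the identifications above $X^\wtri(\mathcal G)$ is the ``saturated semistable'' fibre $V(\omega,\,\ge 2)_{ss}=\mathcal V(\omega,2)_{ss}(\k)\oplus\bigoplus_{i>2}\mathcal G(i,\omega)$. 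Formula~(\ref{HFq}) and its proof then give $|X^\wtri(\mathcal G)^F|=q^{N}(q^{n}-|\N_{\mathcal G(2,\omega)}^F|)$ and $|H^\btri(\mathcal G)^F|=f_{\tau,\omega}(q)\,q^{N}(q^{n}-|\N_{\mathcal G(2,\omega)}^F|)$, where $n=\dim\mathcal G(2,\omega)$, $N=\sum_{i>2}\dim\mathcal G(i,\omega)$, and $f_{\tau,\omega}\in\BZ[t]$ counts $G^F/P(\omega)^F$ (of $p$-independent coefficients, as shown in the proof of Theorem~\ref{NFq}). Since $\mathcal G(2,\omega)$ is an admissible $L^{\perp}(\omega)$-module of strictly smaller rank, Theorem~\ref{NFq} gives $|\N_{\mathcal G(2,\omega)}^F|=n_{\mathcal G(2,\omega)}(q)$ with $n_{\mathcal G(2,\omega)}\in\BZ[t]$ of $p$-independent coefficients; as $n$ and $N$ arrive unchanged from $\gl'$ by Theorem~\ref{hesselinkstrata}, the two displayed right-hand sides are the required polynomials $\psi^\wtri(t),\varphi^\btri(t)\in\BZ[t]$, establishing $\gP_5$ for $\mathcal G$.

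The main obstacle is $(\star)$ in prime characteristic: it forces one to realise $\gl^*$ through the auxiliary admissible lattice $\gl''_\BZ$ (so that its complexification is again the adjoint module and the parameter set $\tilde D_{G'}/G'$ becomes available) and to transport semistability between $\C$ and $\k$ by means of Theorem~\ref{hesselinkstrata}, i.e.\ ultimately by Seshadri's theorem. Once $(\star)$ is in hand, $\gP_1$--$\gP_4$, the centraliser statement and $\gP_5$ are bookkeeping along the lines of Sections~\ref{unip} and~\ref{admiss}.
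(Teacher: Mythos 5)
Your overall strategy is the right one and essentially matches the paper's: realise $\gl$ and $\gl^*$ as admissible $G$-modules coming from admissible $\BZ$-lattices in $\gl'$, use Theorem~\ref{hesselinkstrata} (ultimately Seshadri's theorem) to transport semistability and hence saturated sets between $\C$ and $\k$, identify the resulting strata and blades with $H^\btri(\mathcal G)$ and $X^\wtri(\mathcal G)$, and push through the counting of Theorem~\ref{NFq} for $\gP_5$. Your weight-space bookkeeping showing that the dual lattice $\gl''_\BZ$ carries $(\gl^*)_2^\wtri$ to the base change of $(\gl')_2^\wtri$ is correct.

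However, there is a genuine gap: the entire machinery you invoke for $\gl^*$ — the bilinear form $\langle\ ,\ \rangle$ on $\gl'_\BZ$, its relation to the Killing form, the dual admissible lattice $\gl''_\BZ$, and the identification $\gl^*\cong\gl''_\BZ\otimes_\BZ\k$ from Subsection~\ref{duallattice} — is only constructed in the paper under the assumption that $G$ is semisimple and simply connected. The theorem, by contrast, is stated for an arbitrary connected reductive $G$, and in bad characteristic $\gl^*$ for, say, ${\rm PSL}_2$ or ${\rm PGL}_n$ is not obtained from the adjoint $\Z$-lattice of the given group scheme by duality in the required sense. The paper therefore spends a whole paragraph at the start of the proof reducing to the semisimple simply connected case: it observes that $\N_{\gl^*}$ is in $G$-equivariant bijection with $\N_{\bar\gl^*}$ (where $\bar\gl=\Lie\mathcal{D}G$) via restriction of linear functions to the derived algebra, then passes to a simply connected cover $\iota\colon\tilde G\to\mathcal{D}G$, shows that ${\rm d}_e\iota$ and its transpose induce $\tilde G$-equivariant bijections between the nullcones of $\tilde\gl,\ \tilde\gl^*$ and $\bar\gl,\ \bar\gl^*$, and checks that the Frobenius $F$ lifts to a compatible $\tilde F$ on $\tilde G$ with $\tilde T,\tilde U$ chosen $\tilde F$-stable. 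None of this is in your proposal, and without it the claim in your prime-characteristic argument that "$\gl^*$ is an admissible $G$-module arising from the adjoint $G'$-module $\gl'$" is simply false for a general reductive $G$. You would need to add this reduction step (including compatibility with $F$ for $\gP_5$) before the rest of your argument becomes legitimate.
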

\begin{proof}
Let $U$ be an $F$-stable maximal connected unipotent subgroup of
$G$. It follows from the Hilbert--Mumford criterion (our
Theorem~\ref{HMum}) that $\N_\gl=\,(\Ad G)\cdot\mathfrak{u}$ where
$\mathfrak{u}=\Lie U$. Since $U\subset \mathcal{D}G$, we have that
$\N_\gl\subseteq \N_{\bar{\gl}}$ where
$\bar{\gl}=\Lie \mathcal{D}G$. As any $\xi\in\N_{\gl*}$ vanishes on
a Borel subalgebra of $\gl$, the restriction map
$\gl^*\rightarrow\,\bar{\gl}^*$, $\xi\mapsto \xi\vert_{\bar{\gl}},$
induces a $G$-equivariant injection
$\eta\,\colon\,\,\N_{\gl^*}\rightarrow\,\N_{\overline{\gl}^*}$. But
$\eta$ is, in fact, a bijection since every linear function on
$\mathfrak{u}$ can be extended to a nilpotent linear function on
$\gl$.

Let $\tilde{G}$ be a semisimple, simply connected group isogeneous
to $\mathcal{D}G$. Let $\iota\colon\,
\tilde{G}\rightarrow\,\mathcal{D}G$ be an isogeny and let
$\tilde{U}$ be the connected unipotent subgroup of $\tilde{G}$ with
$\iota(\tilde{U})=U$. Let $\tilde{\gl}=\Lie \tilde{G}$ and
$\tilde{\mathfrak{u}}=\Lie \tilde{U}$. Then ${\rm
d}_e\iota\colon\,\tilde{\gl}\rightarrow\,\bar{\gl}$ maps
$\tilde{\mathfrak{u}}$ isomorphically onto $\mathfrak{u}$ and
 induces a $\tilde{G}$-equivariant bijection
between $\N_{\tilde{\gl}}$ and $\N_{\bar{\gl}}=\bar{\gl}_{\rm nil}$. Let
$\tilde{T}$ be a maximal torus of $\tilde{G}$ normalising
$\tilde{\mathfrak{u}}$ and $T=\iota(\tilde{T})$, a maximal torus of
$G$ normalising $\mathfrak{u}$. We regard $\mathfrak{u}^*$ and
$\tilde{\mathfrak{u}}^*$ as subspaces of $\bar{\gl}^*$ and
$\tilde{\gl}^*$ respectively, by imposing that every
$\xi\in\mathfrak{u}^*$ vanishes on the $T$-invariant complement of
$\mathfrak{u}$ in $\gl$ and every
$\tilde{\xi}\in\tilde{\mathfrak{u}}^*$ vanishes on the
$\tilde{T}$-invariant complement of $\tilde{\mathfrak{u}}$ in
$\tilde{\gl}$. Then the linear map $({\rm d}_e\iota)^*\colon\,
\bar{\gl}^*\rightarrow\,\tilde{\gl}^*$ induced by ${\rm d}_e\iota$
restricts to a linear isomorphism between $\mathfrak{u}^*$ and
$\tilde{\mathfrak{u}}^*$. Since the map $({\rm d}_e\iota)^*$ is
$\tilde{G}$-equivariant, it induces a natural bijection between
$\N_{\tilde{\gl}^*}=\,(\Ad^*\tilde{G})\cdot\tilde{\mathfrak{u}}^*$
and $\N_{\gl^*}=\,(\Ad^* G)\cdot\mathfrak{u}^*$. It is clear from
our description of $F$ in Subsection~\ref{finitefield} that there is
a Frobenius endomorphism $\tilde{F}\colon\,\tilde{G}\rightarrow
\tilde{G}$ such that $\iota\circ \tilde{F}=F\vert_{\mathcal{D}G}$.
Furthermore, $\tilde{T}$ and $\tilde{U}$ can be chosen to be
$\tilde{F}$-stable.

The above discussion shows that in proving the theorem we may assume
that the group $G$ is semisimple and simply connected. Then both
$\gl$ and $\gl^*$ are admissible $G$-modules. More precisely,
$\gl=\,\gl_\BZ\otimes_\BZ\k$ and $\gl^*=\,\gl_\BZ''\otimes_\BZ\k$
for some admissible lattices $\gl_\BZ'$ and $\gl_\BZ''$ in $\gl'$.
Then Theorem~\ref{modulestrata} shows that the subsets
$H^{\btri}(\mathcal G)$ ($\btri \in D_G/G$) are the Hesselink strata
of $\N_{\mathcal G}$ and for each $\btri\in D_G/G$ the subsets
$X^\wtri(\mathcal G)$ with $\wtri\in\btri$ are the blades of
$\N_{\mathcal G}$ contained in $H^\btri(\mathcal G)$. In particular,
$\N_{\mathcal G}=\,\bigsqcup_{\wtri \in D_G}\,X^{\wtri}(\mathcal
G)$, showing that $\mathfrak{P}_3$ holds for $\mathcal G$. It
follows from \cite[Proposition~4.5]{Hess} that for every $\btri\in
D_G/G$ there is a surjective $G$-equivariant map
$H^\btri\twoheadrightarrow \, G/G_0^\wtri$ whose fibres are exactly
the blades $X^\wtri$ with $\wtri\in\btri$ (this map is not a
morphism, in general). So $\mathfrak{P}_1$ and $\mathfrak{P}_2$ hold
for $\mathcal G$ as well. In order to show that $\mathfrak{P}_4$
holds for $\mathcal G$ it suffices to establish that for every $x\in
X^\wtri(\mathcal G)$ the optimal parabolic subgroup $P(x)$ coincides
with $G_\wtri^0$. This is completely analogous to our arguments at
the end of the proof of Theorem \ref{main}. Of course it is much
easier since we may use Tsujii's result (Theorem \ref{Tsujii}) in
its original form, and there is no need for Section \ref{KirNess}.
The inclusion $G_x\subset G_\wtri^0$ follows from
Theorem~\ref{KRthm}(iv).

It remains to show that $\mathfrak{P}_5$ holds for $\mathcal G$, so
suppose from now on that $\k$ is an algebraic closure of $\F_p$ and
$F=F(\tau, l)$ where $q=p^l$; see Subsection~\ref{finitefield}. As
explained there, we have a natural $q$-linear action of $F$ on
$\gl^*$ compatible with the coadjoint action of $G$. We adopt the
notation introduced in the course of proving Theorem~\ref{NFq}. It
follows from Theorem~\ref{Kraft} that the set $\Lambda(\gl,
\tau)\,=\,\Lambda(\gl^*, \tau)$ consists of all pairs
$\big(\lambda'_{\scriptstyle{\btri}},k\big)$ such that
$\lambda'_\btri\in Y^+(T)$ is primitive, $k\in\{1,2\}$ and
$\frac{2}{k}\lambda'_\btri$ is adapted by a suitable nilpotent
element in the adjoint $G'$-orbit labelled by $\btri$. Then
(\ref{HFq}) yields
\begin{eqnarray*}
\varphi_\mathcal{G}^{\btri}(q)&:=&|H^{\btri}(\mathcal
G)^F|\,=\,f_{\tau,\lambda'_\btri}(q)\cdot q^{N(\lambda'_\btri,\,k)}
\Big(q^{n(\lambda'_\btri,k)}-|{\N_{\mathcal{G}(\lambda'_\btri,k)}}^F|\Big)\\
&=&f_{\tau,\lambda'_\btri}(q)\cdot q^{N(\lambda'_\btri,\,k)}
\Big(q^{n(\lambda'_\btri,k)}-n_{\mathcal{G}(\lambda'_\btri,k)}(q)\Big).
\end{eqnarray*}
If $\wtri\in\btri$ is such that $F(G_i^{\wtri}) = G_i^{\wtri}$ for
all $i \ge 0$, then the proof of Theorem~\ref{NFq} also yields that
$\tau^*(\lambda'_\btri)=\lambda'_\btri$ and
$$\psi_\mathcal{G}^{\wtri}(q):=\,|X^{\wtri}(\mathcal{G})^F|\,=\,q^{N(\lambda'_\btri,\,k)}
\Big(q^{n(\lambda'_\btri,k)}-n_{\mathcal{G}(\lambda'_\btri,k)}(q)\Big).$$
As the $L^\perp(\lambda'_\btri)$-modules $\gl(\lambda'_\btri,k)$ and
$\gl^*(\lambda'_\btri,k)$ come from different admissible lattices of
the $(\mathfrak{L}^\perp(\lambda'_\btri))(\C)$-module
$\gl'(\lambda'_\btri,k)$, applying Theorem~\ref{NFq} shows that
$\psi_\gl^\wtri(q)\,=\,\psi_{\gl^*}^\wtri(q)$ are polynomials in $q$
with integer coefficients independent of $p$. This, in turn, implies
that so are $\varphi_\gl^\btri(q)\,=\,\varphi_{\gl^*}^\btri(q)$,
completing the proof.
\end{proof}

\begin{corollary}\label{last}
Let $G$ be a connected reductive group defined over an algebraic
closure of $\F_p$ and assume that we have a Frobenius endomorphism
$F\colon\,G\rightarrow G$ corresponding to an $\F_q$-rational
structure on $G$. Then $\gP_5$ holds for $G$.
\end{corollary}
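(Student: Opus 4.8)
The plan is to deduce $\gP_5$ for $G_{\uni}$ from the already settled case of $\gl$ (Theorem~\ref{g-strat}), using Hesselink's description of strata together with the Kirwan--Ness criterion for the conjugation action. First I would fix $\wtri\in D_G$ with $F(G_i^\wtri)=G_i^\wtri$ for all $i\ge 0$, let $\btri$ be the $G$-orbit of $\wtri$, and recall from Theorem~\ref{uni-pieces} that $X^\wtri$ is the blade of $G_{\uni}$ attached to $\wtri$ and $H^\btri$ the corresponding Hesselink stratum; since $\wtri$ is fixed by $F$, both $X^\wtri$ and $H^\btri$ are $F$-stable. Arguing as in the proof of Theorem~\ref{NFq}, the $F$-stability of the parabolic $G_0^\wtri$ lets me choose an $F$-stable maximal torus $T\subset G_0^\wtri$ and the primitive $\lambda\in Y(T)$ proportional to the one parameter subgroup $\omega\in\tilde{D}_G$ determining $\wtri$; one checks $\tau^*\lambda=\lambda$, $P(\lambda)=G_0^\wtri$, and $G_2^\wtri=U_k(\lambda)$ with $k=m(u,\lambda)\in\{1,2\}$ for $u\in X^\wtri$. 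With $\tau^*\lambda=\lambda$, the groups $L(\lambda)$, $L^{\perp}(\lambda)$, $U_k(\lambda)$, $U_{k+1}(\lambda)$, the quotient $V_k(\lambda)=U_k(\lambda)/U_{k+1}(\lambda)$ and its $L^{\perp}(\lambda)$-semistable locus $V_k(\lambda)_{ss}$ are all $F$-stable.

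The heart of the argument is a description of the blade. Using Lemma~\ref{m(u)} and Theorem~\ref{KNuni} I would show that an element $x\in U_k(\lambda)$ has $m(x,\lambda)=k$ precisely when its image $\bar x$ in $V_k(\lambda)$ is non-zero, and that $\lambda$ is then optimal for $x$ if and only if $\bar x$ is $L^{\perp}(\lambda)$-semistable; combining this with Theorems~\ref{KRthm}(i) and (ii) and the definition of blades in Subsection~\ref{strata} gives
$$X^\wtri=\{x\in U_k(\lambda):\bar x\in V_k(\lambda)_{ss}\}=\pi^{-1}\bigl(V_k(\lambda)_{ss}\bigr),$$
where $\pi\colon U_k(\lambda)\twoheadrightarrow V_k(\lambda)$ is the canonical projection, an $F$-equivariant surjective homomorphism of connected groups with connected unipotent kernel $U_{k+1}(\lambda)$. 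Then the Lang--Steinberg theorem yields $|U_{k+1}(\lambda)^F|=q^{\dim U_{k+1}(\lambda)}$ and a bijection $X^\wtri(\F_q)\cong V_k(\lambda)_{ss}{}^F\times U_{k+1}(\lambda)^F$, whence
$$|X^\wtri(\F_q)|=q^{N(\lambda,k)}\Bigl(q^{n(\lambda,k)}-|{\N_{V_k(\lambda)}}^F|\Bigr),$$
with $N(\lambda,k)=\sum_{i>k}\dim\gl(\lambda,i)=\dim U_{k+1}(\lambda)$, $n(\lambda,k)=\dim\gl(\lambda,k)=\dim V_k(\lambda)$, and $\N_{V_k(\lambda)}$ the nullcone of the $L^{\perp}(\lambda)$-module $V_k(\lambda)$.

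To finish the count of $X^\wtri(\F_q)$ I would invoke Remark~\ref{overZ}, which identifies $V_k(\lambda)$ with $\gl(\lambda,k)$ as $L^{\perp}(\lambda)$-modules over $\BZ$; since $\gl(\lambda,k)$ is an admissible module for $L^{\perp}(\lambda)=\mathfrak{L}^{\perp}(\k)$ and $\mathfrak{L}^{\perp}$ is a reductive $\BZ$-group scheme split over $\BZ$ of rank strictly less than that of $G$ (Proposition~\ref{Zscheme}), Theorem~\ref{NFq} applies and gives $|{\N_{V_k(\lambda)}}^F|=n_{\gl(\lambda,k)}(q)$ for a polynomial with $p$-independent coefficients. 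Comparing the resulting formula $\psi^\wtri(q):=|X^\wtri(\F_q)|=q^{N(\lambda,k)}(q^{n(\lambda,k)}-n_{\gl(\lambda,k)}(q))$ with the formula for $\psi_\gl^\wtri(q)$ obtained in the proof of Theorem~\ref{g-strat}, I expect to conclude $\psi^\wtri=\psi_\gl^\wtri$, a polynomial in $\BZ[t]$ with coefficients independent of $p$. For $H^\btri$ I would repeat the last step of the proof of Theorem~\ref{NFq}: as $P(\lambda)=G_0^\wtri$ and $X^\wtri$ are $F$-stable, \cite[Proposition~4.5(b)]{Hess2} provides an $F$-equivariant bijection $H^\btri\cong G\times^{G_0^\wtri}X^\wtri$, and picking coset representatives in $G^F$ for $G^F/(G_0^\wtri)^F$ and applying Lang--Steinberg to the connected group $G_0^\wtri$ gives
$$\varphi^\btri(q):=|H^\btri(\F_q)|=|G^F/(G_0^\wtri)^F|\cdot|X^\wtri(\F_q)|=f_{\tau,\lambda}(q)\,\psi^\wtri(q),$$
with $f_{\tau,\lambda}\in\BZ[t]$ having $p$-independent coefficients by the argument in the proof of Theorem~\ref{NFq}. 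Thus $\psi^\wtri$ and $\varphi^\btri$ lie in $\BZ[t]$ with coefficients independent of $p$, which is $\gP_5$ for $G_{\uni}$.

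I expect the one genuinely delicate point to be the blade identification in the second paragraph: transporting the Kirwan--Ness criterion (Theorem~\ref{KNuni}) for the non-linear conjugation action on $G_{\uni}$ to the linear action of $L^{\perp}(\lambda)$ on the vector group $V_k(\lambda)$, while keeping track of the Frobenius twist carefully enough to apply Lang--Steinberg and to match the nilpotent-piece polynomials. Everything else — the reduction to $F$-stable $\wtri$, the choice of $F$-stable $T$ and $\tau^*$-fixed $\lambda$, and the final polynomiality — is already available from the proofs of Theorems~\ref{NFq} and~\ref{g-strat}.
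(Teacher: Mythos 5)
Your proposal is correct and follows essentially the same route as the paper's proof: identify $X^\wtri$ with $\pi^{-1}(V_k(\lambda)_{ss})$ via Theorem~\ref{KNuni}, use Lang--Steinberg to reduce the counting to the $F$-fixed points of $V_k(\lambda)_{ss}$ and of the connected unipotent kernel, transfer to $\gl(\lambda,k)$ via Remark~\ref{overZ}, invoke Theorem~\ref{NFq}, and multiply by $|G^F/P(\lambda)^F|$ via Hesselink's Proposition~4.5(b) for $H^\btri$. The only step the paper treats with extra care that you pass over lightly is showing $\tau^*\lambda=\lambda$: the argument in Theorem~\ref{NFq} derives this from the existence of an $F$-fixed point in the relevant piece, which need not exist when $q$ is fixed, so the paper passes to a suitable power $F^r$ with $r\equiv 1\pmod{s}$ (where $s$ is the order of $\tau^*$) to guarantee fixed points before descending back to $\tau^*$; your ``one checks $\tau^*\lambda=\lambda$'' should either reproduce that trick or instead observe that the $F$-stability of the full chain $(G_i^\wtri)_{i\ge 0}$ together with an $F$-stable maximal torus $T\subset G_0^\wtri$ already pins down $\lambda\in Y(T)$ uniquely and forces $\tau^*\lambda=\lambda$, since $\langle\alpha,\lambda\rangle=\langle\alpha,\tau^*\lambda\rangle$ for every root $\alpha$.
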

\begin{proof}
Let $\wtri\in D_G$ be such $F(G_i^\wtri)=G_i^\wtri$ for all $i\ge 0$
and let $\btri$ be the orbit of $\wtri$ in $D_G/G$. Then $gG_0^\wtri
g^{-1}\,=\,P(\lambda_\btri')$  and $gG_i^\wtri
g^{-1}=\,U_i(\lambda_\btri')$ for some $g\in G$, where $i\ge 1$. If
$s$ is the order of $\tau^*$, then there exists $r\in\BN$ with
$r\equiv 1\ ({\rm mod}\ s)$ such that ${X^\wtri(G)}^{F^r}\ne
\emptyset$. Then ${H^\btri(G)}^{F^r}\ne \emptyset$ and the argument
used in the proof of Theorem~\ref{NFq} shows that
$\tau^*(\lambda_\btri')={\tau^*}^r(\lambda_\btri^*)=\lambda_\btri'$.
Since $\tau^*(\lambda_\btri')={\tau^*}^r(\lambda_\btri^*)$ by our
choice of $r$, we see that $P(\lambda_\btri')$ is $F$-stable. Hence
$g^FG_0^\wtri(g^F)^{-1}=\,gG_0g^{-1}$ forcing $g^{-1}g^F\in
N_G(G_2^\wtri)=\,G_0^\wtri$. As $G_0^\wtri$ is connected and
$F$-stable, the Lang--Steinberg theorem shows that
$g^{-1}g^F=x^{-1}x^F$ for some $x\in G_0^\wtri$; see
\cite[Theorem 3.10]{DM}. Replacing $g$ by $gx^{-1}$ we thus may assume that
$g\in G^F$. In conjunction with Theorems~\ref{KNuni} and
\ref{uni-pieces} this shows that
\begin{equation}\label{A}\big|X^\wtri(G)^F\big|=\,
\big|\pi^{-1}\big({V_2(\lambda_\btri')_{ss}}^F\big)\big|
\end{equation}
where $V_2(\lambda_\btri')_{ss}$ stands for the set of all
$L^\perp(\lambda_\btri')$-semistable vectors of the
$L(\lambda_\btri')$-module
$V_2(\lambda_\btri')=\,U_2(\lambda_\btri')/U_3(\lambda_\btri')$ and
$\pi\colon\,U_2(\lambda_\btri')^F\rightarrow\,V_2(\lambda_\btri')^F$
is the map induced by the canonical homomorphism
$U_2(\lambda_\btri')\twoheadrightarrow\,V_2(\lambda_\btri')$. Now
the argument used in the proof of Theorem~\ref{NFq} yields
\begin{equation}\label{B}\big|H^\btri(G)^F\big|\,=\,
\big|G^F/P(\lambda_\btri')^F)\big|\cdot
\big|\pi^{-1}\big({V_2(\lambda_\btri')_{ss}}^F\big)\big|.\end{equation}
In view of Remark~\ref{overZ} we have that
\begin{equation}\label{C}\big|{V_2(\lambda_\btri')_{ss}}^F\big|=\,\big|{\gl(\lambda_\btri',\,2)_{ss}}^F\big|.
\end{equation}
Since the group $U_3(\lambda_\btri')$ is connected and $F$-stable,
the Lang--Steinberg theorem shows that for every
$v\in{V_2(\lambda_\btri')_{ss}}^F$ there is an element $\tilde{v}\in
{V_2(\lambda_\btri')_{ss}}^F$ such that $\pi(\tilde{v})=v$. From
this it is immediate that
\begin{equation}\label{D}
\pi^{-1}(v)=\,\tilde{v}\cdot
U_3(\lambda_\btri')^F\qquad\quad\big(\forall\,v\in{V_2(\lambda_\btri')_{ss}}^F\big).
\end{equation}
Combining (\ref{A}), (\ref{C}) and (\ref{D}) we obtain that
\begin{equation}\label{E}
\big|X^\wtri(G)^F\big|=\,\big|\pi^{-1}\big({V_2(\lambda_\btri')_{ss}}^F\big)\big|
=\,\big|{\gl(\lambda_\btri',\,2)_{ss}}^F\big|\cdot\big|U_3(\lambda_\btri')^F\big|.
\end{equation}
As we know by Remark~\ref{overZ}, for each $i\ge 3$ the connected
abelian group
$V_i(\lambda_\btri')=\,U_i(\lambda_\btri')/U_{i+1}(\lambda_\btri')$
is a vector space over $\k$  isomorphic to $\gl(\lambda_\btri,i)$.
Since $\tau^*\lambda_\btri'=\lambda_\btri'$, it is equipped with a
$q$-linear action of $F$. Therefore
\begin{equation}\label{F}|V_i(\lambda_\btri')^F|=\,q^{\dim \gl(\lambda_\btri',\,i)},\quad\ i\ge 3;\end{equation}
see \cite[Corollary~3.5]{DM}, for example. Since every group
$U_i(\lambda_\btri')$ with $i\ge 3$ is connected and $F$-stable, the
Lang--Steinberg theorem yields that for every $u\in
V_i(\lambda_\btri')^F$ there exists $\tilde{u}\in
U_i(\lambda_\btri')^F$ whose image in $V_i(\lambda_\btri')^F$ equals
$u$. This, in turn, implies that every quotient
$V_i(\lambda_\btri')^F$ with $i\ge 3$ has a section in
$U_i(\lambda_\btri')^F$; we call it
$\widetilde{V}_i(\lambda_\btri')$. Then
\begin{equation}\label{G}
\big|U_3(\lambda_\btri')^F\big|=\,\prod_{i\ge
3}\,\big|\widetilde{V}_i(\lambda_\btri')^F\big|.
\end{equation}
Together (\ref{E}), (\ref{F}) and (\ref{G}) show that
$$
\big|X^\wtri(G)^F\big|=\,\big|\pi^{-1}\big({V_2(\lambda_\btri')_{ss}}^F\big)\big|=\,\big(q^{\dim
\gl(\lambda_\btri',\,2)}-\big|{\N_{\gl(\lambda_\btri',\,2)}}^F\big|\big)\cdot
q^{\dim\gl(\lambda_{\btri}',\,\ge 3)}.$$ As a result,
$|X^\wtri(G)^F|=\,|X^\wtri(\gl)^F|=\,\psi_\gl^\wtri(q)$ for every
$\wtri$ as above. Now (\ref{B}) yields
$|H^\btri(G)^F|=\,|H^\btri(\gl)^F|=\,\varphi_\gl^\btri(q)$. In view
of Theorem~\ref{g-strat} this implies that $\mathfrak{P}_5$ holds
for $G$.
\end{proof}
\begin{rmk}\label{L-X}
1. In the appendix to \cite{Lus7} and more recently in \cite{Lus1},
Lusztig and Xue proposed for $G$ classical a definition of nilpotent
pieces which avoids the partial ordering of nilpotent orbits. Given
$\wtri\in D_G$ choose $g\in G$ as in Subsection~\ref{morepieces} and
define $\gl_2^{\wtri !}$ to be the set of all $x=\sum_{i\ge 2}x_i\in
\gl_2^{\wtri}$ with $x_i\in\gl(i, g\cdot\omega)$ and
$C_G(x_2)\subset G_2^\wtri$. Similarly, let $(\gl^*)_2^{\wtri !}$ be
the set of all $\xi=\sum_{i\ge 2}\xi_i\in (\gl^*)_2^{\wtri}$ with
$\xi_i\in\gl^*(i, g\cdot\omega)$ such that the stabiliser of $\xi_2$
in $G$ is contained in $G_0^\wtri$. According to the definition of
Lusztig and Xue, the nilpotent pieces of $\gl$ and $\gl^*$ are
$$\big\{\gl_2^{\wtri !}\,|\,\, \wtri\in D_G\big\}\, \,\mbox { and }\, \,
\big\{(\Ad G)\cdot\gl_2^{\wtri !}\,|\,\, \btri\in D_G/G \big\}$$ and
$$\big\{(\gl^*)_2^{\wtri !}\,|\,\, \wtri\in D_G\big\}\, \,\mbox{ and
}\, \, \big\{(\Ad^* G)\cdot(\gl^*)_2^{\wtri !}\,|\,\, \btri\in D_G/G
\big\},$$ respectively, where $\wtri$ is implicitly taken to be a
representative of $\btri$ in each case. Lusztig and Xue proved that
for $G$ classical these subsets stratify $\N_\gl$ and $\N_{\gl^*}$.
On the other hand, Theorem~\ref{g-strat} implies that
$X^\wtri(\gl)\subseteq \gl_2^{\wtri !}$ and $X^\wtri(\gl^*)\subseteq
(\gl^*_2)^{\wtri !}$ for every $\wtri\in D_G$.  But equality must
hold in each case because the blades, too, stratify the nullcones.
This shows that for $G$ classical both definitions lead to the same
stratifications of $\N_\gl$ and $\N_{\gl^*}$.

\smallskip

\noindent 2. The proof of Corollary~\ref{last} shows that for any
$p>0$ there exists a bijection between ${G_{\rm uni}}^F$ and
${\gl_{\rm nil}}^F$ which maps every non-empty subset $X^\wtri(G)^F$
onto $X^\wtri(\gl)^F$ and every non-empty subset $H^\btri(G)^F$ onto
$H^\btri(\gl)^F$.

\smallskip

\noindent 3. It follows from \cite[Proposition~6(2)]{Sesh} that for
every $\btri\in D_G/G$ there is a homogeneous regular function
$f_\btri\in \Z[\gl_\BZ'(\lambda_\btri',2)]$ invariant under the
natural action of the group scheme
$\mathfrak{L}^\perp(\lambda_\btri')$ and such that for any
algebraically closed field $\k$ the variety
$\N_{\gl(\lambda_\btri',\,2)}$ coincides with the zero locus of the
image of $f_\btri$ in
$\k[\gl(\lambda_\btri',2)]=\,\Z[\gl_\BZ'(\lambda_\btri',2)]\otimes_\BZ\k$;
see \cite[\S 2.4]{Pre} for a related discussion.
\end{rmk}

\end{document}